\def\R{\mathbb{R}}
\def\N{\mathbb{N}}
\def\P{\mathbb{P}}
\def\E{\mathbb{E}}
\def\R{\mathbb{R}}
\def\Z{\mathbb{Z}}
\def\cov{\mbox{Cov}\,}
\def\1{\mbox{\hspace{.2em}I\hspace{-.6em}1}} 
\theoremstyle{plain}
\theoremstyle{remark}
\DeclareMathOperator{\argmin}{argmin}
\DeclareMathOperator{\argmax}{argmax}
\newtheorem{theo}{Theorem}[section]
\newtheorem{lemma}{Lemma}
\newtheorem{proposition}{Proposition}
\numberwithin{equation}{section}
\newtheorem{corollary}{Corollary}
\newtheorem{remark}{Remark}
\newtheorem{definition}{Definition}
\def\limiten{\renewcommand{\arraystretch}{0.5}
\begin{array}[t]{c}
\stackrel{}{\longrightarrow} \\
{\scriptstyle n \rightarrow\infty}
\end{array}\renewcommand{\arraystretch}{1}}
\def\simn{\renewcommand{\arraystretch}{0.5}
\begin{array}[t]{c}
\stackrel{}{\sim} \\
{\scriptstyle n \rightarrow\infty}
\end{array}\renewcommand{\arraystretch}{1}}
\def\limiteproban{\renewcommand{\arraystretch}{0.5}
\begin{array}[t]{c}
\stackrel{{\mathcal P}}{\longrightarrow} \\
{\scriptstyle n \rightarrow\infty}
\end{array}\renewcommand{\arraystretch}{1}}
\def\limiteloin{\renewcommand{\arraystretch}{0.5}
\begin{array}[t]{c}
\stackrel{{\mathcal D}}{\longrightarrow} \\
{\scriptstyle n \rightarrow\infty}
\end{array}\renewcommand{\arraystretch}{1}}
\def\egaleloi{\renewcommand{\arraystretch}{0.5}
\begin{array}[t]{c}
\stackrel{{\mathcal D}}{\sim} \\
{\scriptstyle }
\end{array}\renewcommand{\arraystretch}{1}}
\def\limiteasn{\renewcommand{\arraystretch}{0.5}
\begin{array}[t]{c}\stackrel{a.s.}{\longrightarrow} \\
{\scriptstyle
n\rightarrow+\infty}\end{array}\renewcommand{\arraystretch}{1}}
\def\limitesimn{\renewcommand{\arraystretch}{0.5}
\begin{array}[t]{c}\stackrel{}{\sim} \\
{\scriptstyle
n\rightarrow \infty}\end{array}\renewcommand{\arraystretch}{1}}
\begin{document}
\begin{frontmatter}
\title{Efficient and consistent data-driven model selection for time series}
\runtitle{Data-driven model selection for time series}
\begin{aug}
\author[A]{\fnms{Jean-Marc} \snm{Bardet} \ead[label=e1]{bardet@univ-paris1.fr}},
\author[A]{\fnms{Kamila} \snm{Kare} \ead[label=e2]{kamilakare@gmail.com}}
\and
\author[C]{\fnms{William} \snm{Kengne} \ead[label=e3]{william.kengne@u-cergy.fr}}
\address[A]{University Paris 1 Panthéon-Sorbonne, SAMM, France, \printead{e1}, \printead{e2}}
\address[C]{CY Cergy Paris Université, THEMA, France, \printead{e3}}
\end{aug}
\begin{abstract}
~~This paper studies the model selection problem in a large class of causal time series models, which includes both the ARMA or AR($\infty$) processes, as well as the GARCH or ARCH($\infty$), APARCH, ARMA-GARCH and many others processes. We first study the asymptotic behavior of the ideal penalty that minimizes the risk induced by a quasi-likelihood estimation among a finite family of models containing the true model. Then, we provide general conditions on the penalty term for obtaining the consistency and efficiency properties. We notably prove that consistent model selection criteria outperform classical AIC criterion in terms of efficiency. Finally, we derive from a Bayesian approach the usual BIC criterion, and by keeping all the second order terms of the Laplace approximation, a data-driven criterion denoted KC'. Monte-Carlo experiments exhibit the obtained asymptotic results and show that KC' criterion does better than the AIC and BIC ones in terms of consistency and efficiency.
\end{abstract}
\begin{keyword}[class=MSC2010]
\kwd[Primary ]{62G05}
\kwd{
62G20}
\kwd[; secondary ]{62M05}
\end{keyword}
\end{frontmatter}
\tableofcontents

\section{Introduction}
Model selection is one of the fundamental tasks in Statistics and Data Science. It aims at providing a model (or an algorithm) that is the best, following a criterion, to represent observed data. Two leading model selection procedures have received a lot of attention in the literature. On one hand, the resampling methods such as hold out or more generally $V$-fold cross-validation are widely used in the machine learning community. On the other hand, the methods based on the minimization of a penalized risk are also now very popular in the community of applied or theoretical statisticians. They are certainly more appropriate to be applied to time series since they take into account the dependence between data. This will be our choice in this paper.
\\
~\\
The main challenging task when designing a penalized based criterion is the calibration of the penalty. This is mainly dependent on the goal one would like the final criterion achieves. For instance, the objective could be the \textit{consistency}, the \textit{efficiency} or  the \textit{adaptive nature in the minimax sense} to name a few.
\\
~\\
The consistency property aims at identifying the data generating process with high probability.  Hence, it requires the assumption whereby there exists a true model in the set of competitive models and the goal is to select this with probability approaches one as the sample size tends to infinity. Although the consistency is a convincing mathematical property, this asymptotic property is not always the most interesting when switching to a practical implementation. Indeed, the true underlying process is generally unknown and trying to identify the true model for any data is quite ambitious. It is often more plausible to assume that the true data generating process is infinite-dimensional, and that one tries to identify a "good" finite-dimensional model based on the data (\cite{Hurvich1989}). Therefore, it is common in this framework to let the dimension of the competitive models to depend on the number of observations in order to obtain a better approximation and to reduce the  prediction's risk. Hence, the model selection is said to be efficient when its risk is asymptotically equivalent to the risk of the \textit{oracle}. \\
~\\
In this work, we are interested by providing efficient and consistent penalized data-driven criteria for affine causal time series, which are defined by:  
\\
~\\
\textbf{Class} $\mathcal{AC}(M,f):$ {\it A process $X=(X_t)_{t\in \Z}$ belongs to $\mathcal{AC}(M,f)$ if it satisfies:
\begin{equation}
 X_t=M\big((X_{t-i})_{i\in \mathbb{N}^*}\big)\, \xi_t+f\big((X_{t-i})_{i\in \mathbb{N}^*}\big) \;\; \mbox{for any}~ t\in \Z.
\label{eq:serie2}
\end{equation}
where $(\xi_t)_{t\in T}$ is a sequence  of zero-mean independent and identically distributed random vectors (i.i.d.r.v) satisfying $\mathbb{E}(|\xi_0|^r)<\infty $ with $r\geq 1$  and $M$, $f$ : $\R^\infty \to \R$ are two measurable functions, where $R^\infty$ is the set of numeric sequence with finite number of non-zero terms. }\\
~\\
 \noindent For instance,
\begin{itemize}
	\item if $M\big((X_{t-i})_{i\in \mathbb{N}^*}\big)=\sigma$ and $f\big((X_{t-i})_{i\in \mathbb{N}^*}\big)=\phi_1X_{t-1}+\cdots+\phi_pX_{t-p}$,
	then $(X_t)_{t \in \Z}$ is an AR$(p)$ process;
	\item if $M\big((X_{t-i})_{i\in \mathbb{N}^*}\big)=\sqrt{a_0+a_1X^2_{t-1}+\cdots+a_pX^2_{t-p}}$ and $f\big((X_{t-i})_{i\in \mathbb{N}^*}\big)=0$, then $(X_t)_{t \in \Z}$ is an ARCH$(p)$ process.
\end{itemize}

\noindent Note that numerous classical  time series models such as  ARMA($p,q$), GARCH($p,q$), ARMA($p,q$)-GARCH($p,q$) (see \cite{ding} and \cite{ling})
or APARCH$(\delta,p, q)$ processes (see \cite{ding}) belong to $\mathcal{AC}(M,f)$.
 
\noindent The study of these causal affine time series  more often requires the classical regularity conditions on the functions $M$ and $f$ that are not really restrictive and remain valid for many time series.  

\medskip

\noindent 
We will consider the semi-parametric class of models $\mathcal{AC}(M_\theta,f_\theta)$ where  $\theta \in \Theta$ (a compact subset of $\R^d$, $d \in \N$), where $(f_{\theta})_{\theta \in \Theta}$ and $(M_{\theta})_{\theta \in \Theta}$ are two families of functions such as for $\theta \in \Theta$, $f_{\theta}:\R^\infty \to \R$ and  $M_{\theta}:\R^\infty \to [0,\infty)$ are known and the distribution of $\xi_0$ is unknown. A finite family of models ${\cal M}$ (for instance, the class of AR(p) processes where $0\leq p \leq p_{\max}$) will be considered, where a model $m\in {\cal M}$ corresponds to a linear subspace of $\R^d$. A trajectory $(X_1,\cdots,X_n)$ generated from the class $\mathcal{AC}(M_{\theta^*},f_{\theta^*})$ with the "true" model $m^* \in \mathcal{M}$ is supposed to be observed (see Section \ref{sec:2}).\\
~\\
\noindent There already exist several important contributions devoted to the model selection for time series; we
refer to the book of \cite{mcQ} and the references therein for an overview on this topic. Also, the time series model selection literature is very extensive and still growing; we refer to the monograph of \cite{Rao2001}, which provided an excellent summary of existing model selection procedure, including the case of time series models as well as the recent review paper of \cite{Ding2018}. The asymptotically efficient selection property has already been tackled in case of linear process type $AR(\infty)$ by \cite{Shibata}, \cite{Shao1997}, \cite{Karagrigoriou1997}, \cite{Ing2005}, \cite{Ing2012},  and recently by \cite{hsu2019model}.  \\
~\\
The study of a consistent model selection in this class of affine causal processes has been also considered by \cite{bar2019} and \cite{Kengne2021}. As in these papers, we consider here a risk built from the Gaussian conditional log-Likelihood, which is naturally deduced for all causal affine models $\mathcal{AC}(M_\theta,f_\theta)$, and consider a model selection criterion as a penalized Gaussian Quasi-Maximum conditional log-Likelihood (see for instance \cite{francq2019} or \cite{barW}). This allows us:
\begin{enumerate}
\item To study the asymptotic behavior of an ideal penalty that is defined as providing a minimization of the risk;
\item To determine the conditions for obtaining (or not) the asymptotic consistency of a criterion, {\it i.e.} that it allows asymptotically to select the true model;
\item To determine the conditions for approaching (or not) in $1/n$ or even in $o(1/n)$ the minimal risk, thus to obtain an asymptotic efficiency;
\item To determine from a Bayesian approach the BIC criterion as well as a second data-driven criterion called KC', which is obtained by keeping all the second order terms in the Laplace approximation and to prove that they verify the properties of asymptotic consistency and efficiency in $o(1/n)$.
\end{enumerate}
In the end, we show that in the chosen framework the BIC and KC' criteria offer all the advantages with respect to the classical AIC criterion, which allows neither the asymptotic consistency nor the same efficiency. Numerical simulations confirm these results and also show that the new data-driven KC' criterion clearly outperforms the BIC criterion both in terms of consistency and efficiency.\\ 
~\\
\noindent The paper is organized as follows.
The model selection framework based on Gaussian Likelihood risk is described in Section
\ref{sec:2}. In Section \ref{TLCm}, the precise assumptions are stated and they lead to new asymptotic results satisfied by the Quasi-Maximum Likelihood Estimator. The asymptotic behavior of the ideal penalty is studied in Section \ref{sec:resul} as well as some conditions for obtaining the asymptotic efficiency or consistency of a criterion. In Section \ref{sec:bay}, the usual BIC criterion  as well as the data-driven criterion KC' are studied. Finally, examples are detailed in Section \ref{Exam}, numerical results are presented in Section \ref{sec:simu} and Section \ref{sec:proof} contains the proofs.

\section{Model selection framework}\label{sec:2}

\subsection{Finite family ${\cal M}$ of parametric affine causal models} \label{finite_family_AC}
Assume a trajectory $(X_1, \ldots, X_n)$ is observed from a causal stationary solution of (\ref{eq:serie2}) where $M$ and $f$ are two known functions depending on an unknown finite dimensional vector of parameters $\theta^*$.  \\
~\\
Now consider a finite family $\cal M$ of models belonging to parametric affine causal models. In Proposition 1 of \cite{bar2019}, due to the linearity of such models and because ${\cal M}$ is a finite family, it was established that it is always possible to find a dimension $d \in \N^*$ and a unique couple of known functions $(M_{\theta},f_{\theta})$ with $\theta \in \R^d$ such as in such a way that any model $m \in {\cal M}$ belongs to the class $\mathcal{AC}(M_{\theta},f_{\theta})$. More precisely, there is a one-to-one correspondence between each model $m \in {\cal M}$ and a linear subspace $\Theta_m \subset \R^d$ and $\dim(\Theta_m)=|m|$ the number of unknown parameters of the model $m$. As a consequence, if we denote $m^*$ the "true" model corresponding to $\mathcal{AC}(M_{\theta^*},f_{\theta^*})$, we will say:
\begin{itemize}
\item if $m\in {\cal M}$ is such that $\Theta_{m^*} \subset \Theta_m$ and $\Theta_{m^*} \neq \Theta_m$ (also denoted $m^*\subset m$ and $m^*\neq m$), this is an overfitting's case; 
\item if $m\in {\cal M}$ is such that $\Theta_{m^*} \not \subset \Theta_m$ (also denoted $m^*\not \subset m$) , this is a misspecified case. 
\end{itemize}
For example, if $m^*$ corresponds to a AR$(2)$ process and if ${\cal M}$ contains AR$(p_{\max})$ processes and ARCH$(q_{\max})$, we have $d=1+p_{\max}+q_{\max}$ and for $\theta=(\theta_i)_{0\leq i \leq d}$, 
$$
f_\theta((X_{t-k})_{k\geq 1})=\sum_{i=1}^{p_{\max}} \theta_i \, X_{t-i}\qquad \mbox{and}\qquad M_\theta((X_{t-k})_{k\geq 1})=\Big (\theta_0+\sum_{i=p_{\max}+1}^{p_{\max}+q_{\max}} \theta_i \, X^2_{t-i} \Big )^{1/2}.
$$
Then $\Theta_{m^*}=\big \{(\theta_0,\theta_1,\theta_2,0,\ldots,0),\, (\theta_0,\theta_1,\theta_2) \in \R^3\big \}$, an AR$(4)$ process implies an overfitting, while an AR$(1)$ or an ARCH$(2)$ process implies a misspecified case. \\
In the sequel, we will always assume that 
$$
m^* \in {\cal M}.
$$
This true model $m^*$ is supposed to be unknown. After observing the trajectory $(X_1,\ldots, X_n)$, our goal is to find the most probable model (see Section \ref{sec:bay}) among the finite family $\cal M$ or a "best" model that forecasts with a minimum risk. Here we have chosen a risk that is derived from a Quasi-Maximum Likelihood contrast, which is presented below.


\subsection{Maximum Likelihood Estimation} \label{sub:1}

For each $\theta \in \Theta$, we will begin by defining its risk by:
\begin{multline}\label{R}
R(\theta):=\P \gamma(\theta)=\E[ \gamma(\theta, X_1)] \\
\mbox{with} \quad \gamma(\theta, X_{t}):= \frac{(X_t - f_{\theta}^t)^2}{H_{\theta}^t} + \log(H_{\theta}^t)\quad
\mbox{and}\quad
\displaystyle \left \{
\begin{array}{lcl}
{f}_{\theta}^t&:=&f_{\theta}\big ( (X_{t-k})_{k\geq 1}\big )\\
{M}_{\theta}^t&:=&M_{\theta}\big ( (X_{t-k})_{k\geq 1}\big ) \\
{H}_{\theta}^t&:=&\big (M_{\theta}^t\big )^2
\end{array} \right . .
\end{multline}
By referring to \cite{massart2007} or \cite{francq2019}, the contrast $\gamma(\theta,.)$ is $-2$ times the Gaussian conditional log-density of $X_t$. Moreover, the Gaussian Maximum Likelihood Estimator (MLE) is derived from the conditional (with respect to the filtration $\sigma \big \{(X_{t})_{t\leq 0} \big \}$) log-likelihood of $(X_1,\ldots,X_n)$ when $(\xi_t)$
is supposed to be a Gaussian standard white noise. We deduce that this conditional log-likelihood (up to an additional constant) $L_n$ is defined for a parameter $\theta$ by:
\begin{equation}
L_n(\theta):=-\frac{1}{2}\sum_{t=1}^n \gamma(\theta, X_t).
\label{eq:eq1}
\end{equation}
As it has been proved in \cite{barW}, under a classical identifiability assumption, the risk function $R$ achieves its unique minimum at the "true"' parameter $\theta^*$  over any parameter set $\Theta$, when $\theta^* \in \Theta$, {\it i.e.}
\begin{equation}\label{theta*}
\theta^*= \underset{\theta \in \Theta}{\argmin} \, R(\theta).
\end{equation}
Therefore $\theta^*$ is considered as an ideal predictor for the model selection procedure and serves as a benchmark to compare predictors. Given a model $m \in {\cal M}$ and $\Theta_m$ its parameter space that does not necessarily contains $\theta^*$, let us define
\begin{equation}\label{thetam*}
\theta_m^*= \underset{\theta \in \Theta_m}{\argmin} \; R(\theta). 
\end{equation}
As a consequence, we have:
$$
\theta_{m^*}^*=\theta^*\quad\mbox{and if $m^*\subset m$,}\quad \theta_{m}^*=\theta^*.
$$
Besides of minimizing the risk $R$, we also consider the minimization of its associated loss function, which is defined as 
\begin{equation}
\ell (\theta, \theta^*):=R(\theta)-R(\theta^*) \ge 0.
\label{eq:risk}
\end{equation}
This is a well-known measure of separation between the candidate model generated by $\theta$ and the true one indexed by $\theta^*$.\\~
\\
Let set by $\gamma_n$ the associated empirical risk defined by
$$
\gamma_n(\theta):=\P_n \gamma(\theta,.)= \frac{1}{n} \sum_{t=1}^{n} \gamma(\theta, X_t)=- \frac{2}{n} L_n(\theta),
$$
so that maximizing the log-likelihood is equivalent to minimize the empirical criterion $\gamma_n.$

\subsection{Quasi-Maximum Likelihood Estimation}
Since the  white noise is not necessarily a Gaussian one and since the log-likelihood (and then the empirical risk) $L_n(\theta)$ depends on $(X_t)_{t\le 0}$ that are unknown, a quasi-log-likelihood $\widehat L_n(\theta)$ can be used as an approximation of the log-likelihood. It It consists of replacing $\gamma(\theta, X_t)$ by an approximation $\widehat{\gamma}(\theta, X_t)$ and those statistics are defined for all $\theta \in \Theta$ by
\begin{multline}
\widehat{L}_n(\theta):=-\frac{1}{2}\, \sum_{t=1}^n \widehat{\gamma}(\theta, X_t)\quad  \\
\textnormal{with} ~ \widehat{\gamma}(\theta, X_t):=\frac{(X_t - \widehat{f}_{\theta}^t)^2}{\widehat{H}_{\theta}^t} + \log(\widehat{H}_{\theta}^t) \quad
\mbox{and}~
\displaystyle \left \{
\begin{array}{lcl}
\widehat{f}_{\theta}^t&:=&f_{\theta}(X_{t-1},X_{t-2},\cdots,X_1,u)\\
\widehat{M}_{\theta}^t&:=&M_{\theta}(X_{t-1},X_{t-2},\cdots,X_1,u)  \\
\widehat{H}_{\theta}^t&:=&(\widehat{M}_{\theta}^t)^2
\end{array} \right .
\end{multline}
for any deterministic sequence $u=(u_n)_{n\in \N}$ with finitely many non-zero values (we will use $u =0$ without loss of generality).

\noindent In addition the computable empirical risk is then:
$$
\widehat{\gamma}_n(\theta)=\P_n \widehat{\gamma}(\theta, .)=\frac 1 n \,\sum_{t=1}^n \widehat{\gamma}(\theta, X_t)=- \frac{2}{n}\widehat{L}_n(\theta). 
$$ 
\noindent Finally, for each specific model $m\in {\cal M}_n$, we define a Gaussian Quasi-Maximum Likelihood Estimator (QMLE) $\widehat{\theta}_m$ as
\begin{equation}
\widehat{\theta}_m \in \underset{\theta \in \Theta_m}{\argmax} \;\widehat{L}_n(\theta)= \underset{\theta \in \Theta_m}{\argmin} \;\widehat{\gamma}_n(\theta).
\label{eq:qmle}
\end{equation}
The estimator $\widehat{\theta}_m$ is commonly called the \textit{Empirical Risk Minimizer} (ERM).  

\subsection{The penalization procedure}
For $m \in \mathcal{M}$, the ERM provides an estimator  in $\Theta_m$. The goal is to come up with a model that minimizes the excess loss over $\mathcal{M}$
\begin{equation}
\underset{m \in \mathcal{M}}{\argmin}~ \ell(\widehat{\theta}_{m},\theta^*).
\label{eq:orac}
\end{equation}
This model is unknown since \eqref{eq:orac} depends on $\theta^*$ and the distribution $P_{(X_1,\ldots,X_n)}$ that are unknown. 


~\\
A classical way to solve \eqref{eq:orac} problem is to design for every $m \in \mathcal{M}$ an estimator of $R(\widehat{\theta}_{m})$ and we naturally choose $\widehat{\gamma}_n(\widehat{\theta}_{m})$.
First, it is well known that the empirical criterion $\widehat{\gamma}_n(\widehat{\theta}_{m})$ is an optimistic version of $R(\widehat{\theta}_{m})$ and decreases with the dimension of the model. Therefore, it is common to add a penalty term to counteract this bias.\\
As a consequence, define a function pen: $m\in \mathcal{M} \mapsto \textnormal{pen}(m) \in \R^+$, which is called the penalty function and is possibly data-dependent. We will only require that $\textnormal{pen}(m_1) \leq \textnormal{pen}(m_2)$ when $m_1 \subset m_2$. Then define the \textit{penalized contrast} and the model selected by it:
\begin{equation}
\widehat{m}_{\textnormal{pen}}= \underset{m \in \mathcal{M}}{\argmin} \; \big \{\widehat{C}_{\textnormal{pen}}(m) \big \} \qquad \textnormal{with} \qquad \widehat{C}_{\textnormal{pen}}(m):=  \widehat{\gamma}_n\big(\widehat{\theta}_m\big)+\textnormal{pen}(m).
\label{eq:crit}
\end{equation}
In order to achieve \eqref{eq:orac}, the {\it ideal penalty} to consider in \eqref{eq:crit}  is 
\begin{equation}
\textnormal{pen}_{id}(m)=R(\widehat{\theta}_{m})- \widehat{\gamma}_n(\widehat{\theta}_m). 
\label{eq:penid}
\end{equation}
Using this definition, we obtain an "ideal" model defined by:
\begin{equation}
\widehat m_{id}:\in \underset{m \in \mathcal{M}}{\argmin} \,\big \{\ell(\widehat{\theta}_{m},\theta^*)\big \}=\underset{m \in \mathcal{M}}{\argmin} \, \big \{ R(\widehat{\theta}_{m})\big \}=\underset{m \in \mathcal{M}} {\argmin}\,\big \{\widehat{C}_{\textnormal{pen}_{id}}(m) \big \} . 
\label{mid}
\end{equation}
However, the function $R$ is unknown except for very few particular and parametric cases and therefore $\textnormal{pen}_{id}$ cannot generally be used directly. Therefore the question is how to choose the penalty in \eqref{eq:crit} so that  $\widehat{m}_{\textnormal{pen}}$ mimics the \textit{oracle} $\widehat m_{id}$. Hence, we would like our final estimator $\widehat{\theta}_{\widehat{m}_{\textnormal{pen}}}$ to behave asymptotically  like the oracle. That is to satisfy:
 \begin{equation}
\P \Big ( \ell(\widehat{\theta}_{\widehat{m}_{\textnormal{pen}}},\theta^* )\le \underset{m \in \mathcal{M}}{\min} \big\{\ell(\widehat{\theta}_{m},\theta^* )\big\}+\frac{C}{n} \Big ) \limiten 1
\label{ao}
\end{equation}
and/or for any $n \ge n_0$
\begin{equation}
\E\big[	\ell(\widehat{\theta}_{\widehat{m}_{\textnormal{pen}}},\theta^* )\big] \le \underset{m \in \mathcal{M}}{\min} \Big\{\E\big[\ell(\widehat{\theta}_{m},\theta^*)\big]\Big\} +\frac {C}{n}.
\label{ao1}
\end{equation}
The aim of this paper is to find a good choice of $\mbox{pen}(m)$ in order to obtain  the {\it asymptotic efficiency} \eqref{ao1} or \eqref{ao}.
\section{Asymptotic behavior of the QMLE} \label{TLCm}
Before considering the problem of model selection, we establish a central limit theorem satisfied by $\widehat{\theta}_{m}$ for any model $m\in {\cal M}$, {\it i.e.} as well if $m$ is an overfitted or a misspecified model. Before this, some notations and assumptions have to be precised.
\subsection{Notations and main assumptions}
In the sequel, we will consider a subset $\Theta$  of $ \mathbb{R}^{d}$ which is compact. We will use the following norms:
\begin{itemize}
	\item $\|.\|$  denotes the usual Euclidean norm on $\R^\nu$, with $\nu\geq 1$;
	\item for a matrix $A$, denote  $\|A\|$ the subordinate matrix norm such that  $\|A\|=\underset{v \ne 0 }{\sup} \, \frac{\|A \, v\|}{\|v\|}$;
	\item if $X$ is a $\mathbb{R}^\nu$-random variable and $r\ge 1$, we set $\|X\|_r=\big (\E \big [\|X\|^r\big ]\big)^{1/r} \in [0,\infty]$;
	\item for $\theta \in \Theta \subset \R^d$, if $\Psi_{\theta}:\R^\infty \to E$ where $E=\R^\nu$ or $E$ is a set of square matrix, denote $\|\Psi_{\theta}(\cdot)\|_{\Theta}=\underset {\theta \in \Theta}{\sup} \big \{ \|\Psi_{\theta}(\cdot)\|\big \}$;
	\item for $\theta \in \Theta \subset \R^d$, if $\Psi_{\theta}:\R^\infty \to \R$ is a ${\cal C}^2(\Theta\times R^\infty)$ function, we will denote 
	$$\partial_\theta \Psi_{\theta}(\cdot)=\Big ( \frac {\partial}{\partial \theta_i} \Psi_{\theta}(\cdot)\Big )_{1\leq i \leq d}=\big ( \partial _{\theta_i} \Psi_{\theta}(\cdot)\big )_{1\leq i \leq d}\quad\mbox{and}\quad\partial^2_{\theta^2} \Psi_{\theta}(\cdot)=\Big ( \frac {\partial^2}{\partial \theta_i\partial \theta_j} \Psi_{\theta}(\cdot)\Big )_{1\leq i,j \leq d};
	$$
	\item consider $\Psi_{\theta}:\R^\infty \to \R$ for any $\theta \in \Theta \subset \R^d$. Then, we define the assumption: \\
~\\
\textbf{Assumption A}$(\Psi_{\theta},\Theta)$: $\|\Psi_{\theta}(0)\|_\Theta < \infty$ and there exists a sequence of non-negative real numbers $\big(\alpha_k(\Psi_{\theta},\Theta)\big)_{k\ge 1}$ such that $\sum_{k=1}^{\infty}\alpha_k(\Psi_{\theta},\Theta)< \infty$ and satisfying:
	\[
	\hspace*{-3mm}
	\|\Psi_{\theta}(x)-\Psi_{\theta}(y)\|_\Theta \le \sum_{k=1}^{\infty}\alpha_k(\Psi_{\theta},\Theta)|x_k-y_k|\quad\mbox{for all $x,y \in \mathbb{R}^\mathbb{\infty}$}.
	\]
\end{itemize} 	
Several assumptions on the AC class will be considered thereafter:

\medskip

\noindent \textbf{Assumption A0}: {\it  The process $X \in \mathcal{AC}(M_{\theta^*},f_{\theta^*})$ where $\theta^*\in \Theta$ is defined in \eqref{eq:serie2} where:
\begin{itemize}
\item the white noise $(\xi_t)_t$ is such as $\|\xi_0\|_r<\infty$ with $8<r$;
\item for any $x \in \R^\infty$, the functions $\theta \to M_\theta$ and $\theta \to f_\theta$ are ${\cal C}^2(\Theta)$ functions:
\item $\Theta \in \R^d$ is a compact set such as
\begin{multline}\label{Theta}
\Theta \subset \Big \{\theta  \in \R^d,~ A(f_{\theta},\{\theta\})\; \textnormal{and}\; A(M_{\theta},\{\theta\})\; \textnormal{hold with} \\
\sum_{k=1}^{\infty} \alpha_k(f_{\theta},\{\theta\}) +\|\xi_0\|_r \, \sum_{k=1}^{\infty} \alpha_k(M_{\theta},\{\theta\}) < 1 \Big \}.
\end{multline}
\end{itemize}}
\noindent Under this assumption, \cite{dou} showed that  there exists a stationary causal ({\it i.e.} $X_t$ is depending only on $(X_{t-k})_{k\in \N}$ for any $t\in \Z$) and ergodic solution of (\ref{eq:serie2}) with $r$-order moment for any $\theta \in \Theta$.\\
~\\
Now the assumption {\bf A0} holds. We will also add several assumptions required for insuring the strong consistency and the asymptotic normality of the QMLE:\\
~\\
The first following classical assumption ensures the identifiability of $\theta^*$.

\medskip

\noindent \textbf{Assumption A1}: {\it  For all $\theta, \, \theta' \in \Theta$,
	$(f_{\theta}^0=f_{\theta'}^0 \; and\;  M_{\theta}^0=M_{\theta'}^0 ) ~ a.s. \implies \theta=\theta'. $ } 
\begin{remark}\label{Ident}
Even if this assumption is a classic one in an M-estimation framework, it is important to remark that it does not cover all the cases of model selection of usual causal time series. Indeed, in the case of the family of ARMA processes, it is well known that a model is unique when both the polynomials $P_0$ and $Q_0$ of AR and MA parts are coprime. Hence, for instance, if the true model is an ARMA$(p_0,q_0)$ process, any ARMA$(p_0+1,q_0+1)$ representation with respective polynomials $P(X)=P_0(X)(X-r)$ and $Q(X)=Q_0(X)(X-r)$ of AR and MA parts, is identically the same as the true model whatever $r\in \R$. Then,  Assumption {\bf A1} is never satisfied for ARMA processes in case of overfitting. However, by initializing $\theta$ around $0$ in the optimization algorithm, we have noticed from Monte-Carlo experiments that the algorithm always converges to $\theta^*$ and not other solution.
\end{remark}

\noindent Next, the following Assumption ensures the invertibility of the asymptotic covariance matrix $G$ and $F$ (see below) that is necessary to prove the asymptotic normality of the QMLE (see for instance \cite{barW}).

\medskip

\noindent \textbf{Assumption A2}: {\it $<\alpha, \partial_\theta f^0_{\theta}>=0~\!\implies\! ~\alpha=0~a.s.~$ or $~<\alpha, \partial_\theta M^0_{\theta}>=0~\!\implies \! ~\alpha=0~a.s.$}

\medskip

%
%
 
\noindent The definition of the computable empirical risk and requires that its denominators do not vanish. Hence, we are going to assume throughout this paper that the lower bound of $H_{\theta}(\cdot) = \big(M_{\theta}(\cdot) \big)^2$  is strictly positive:

\medskip

\noindent \textbf{Assumption A3}: {\it $\exists \underline{h}>0$ such that $\underset{\theta \in \Theta}{\inf}(H_{\theta}(x)) \ge \underline{h}$ for all $x\in \mathbb{R}^{\infty}$.}

\medskip

\noindent The following assumption is a technical classical condition (see \cite{lvliu}).

\medskip

\noindent \textbf{Assumption A4}: {\it For every $m \in \mathcal{M}$, if $(\overline{\theta}_{m,n})$ is a sequence of $\Theta_m$ satisfying $\overline{\theta}_{m,n} \limiteasn \theta^*$, then  
\begin{equation} \label{eqA4} 
  \underset{n \rightarrow \infty}{\limsup}~ \Big\{\E\Big[\Big ( \Big \| \frac 1 n \, \big ( \partial^2_{\theta_i \theta_j} L_n(\overline \theta_m) \big )_{i,j\in m}\Big )^{-1}\Big \|^8\Big]\Big\}  < \infty. 
 \end{equation} }
\begin{remark} Note that under assumption {\bf A0},  if $\overline{\theta}_{m,n} \limiteasn \theta_m^*$ then 
$$
 \Big \| \Big ( \frac 1 n \, \big ( \partial^2_{\theta_i \theta_j} L_n(\overline \theta_m) \big )_{i,j\in m}\Big )^{-1}\Big \|^8 \limiteasn  \Big \| \Big ( \big (-\frac 1 2 \,  \partial^2_{\theta_i \theta_j} \gamma(\theta^*_m) \big )_{i,j\in m}\Big )^{-1}\Big \|^8 
$$ 
Thus, from the  Egorov's Theorem, we can find an event $\widetilde{\Omega}$ with sufficiently large probability such that the relation (\ref{eqA4}) in the assumption {\bf A4} holds if the expectation is taken on the event $\widetilde{\Omega}$.
 For the particular case of the linear processes, the assumption {\bf A4} holds true under a mild condition on the distribution of $X$,
 see for instance \cite{papa} and \cite{Findley2002}. 
\end{remark}

\noindent  Finally, the decrease rates of $(\alpha_j (f_\theta,\Theta))_j$, $(\alpha_j (M_\theta,\Theta))_j$, $(\alpha_j (\partial_{\theta} f_\theta,\Theta))_j$ and $(\alpha_j (\partial_{\theta} M_\theta,\Theta))_j$ have to be fast enough for insuring the strong consistency and the asymptotic normality of the QMLE:
\medskip

\noindent \textbf{Assumption A5}: {\it Conditions {\bf A}$(f_{\theta},\Theta)$, {\bf A}$(M_{\theta},\Theta)$, {\bf A}$(\partial_{\theta}f_{\theta},\Theta)$, {\bf A}$(\partial_{\theta} M_{\theta},\Theta)$, {\bf A}$(\partial^2_{\theta^2}f_{\theta},\Theta)$ and  {\bf A}$(\partial^2_{\theta^2} M_{\theta},\Theta)$ hold with 
$$\alpha_j (f_\theta,\Theta)+ \alpha_j (M_\theta,\Theta) + \alpha_j (\partial_{\theta} f_\theta,\Theta)+ \alpha_j (\partial_{\theta} M_\theta,\Theta) =O(j^{-\delta})\quad \mbox{where}\quad \delta>7/2.
$$}
~\\
\noindent Note that Assumption {\bf A5} does not allow to consider long-range dependent processes, but usual short memory causal time series satisfy this assumption.  
\subsection{New asymptotic results satisfied by $\widehat \theta_m$}
The asymptotic normality of $\widehat \theta_m$ has been already established in \cite{barW} when $m=m^*$ and in \cite{bar2019} when $m^*\subset m$ (overfitting). This property can also be extended in the case of misspecified model, {\it i.e.} when $m^* \not \subset m$.   \\
~\\
First, the following corollary is a particular case of a more general result, Proposition \ref{theo0}, which is stated in Section \ref{sec:proof} devoted to the proofs. \\
To begin with, and from Assumption {\bf A2} and {\bf A5}, we can define the definite positive matrix 
\begin{eqnarray}\label{matrixG}
G(\theta)&:=&\frac 1 4 \, \Big (\sum_{t\in \Z} \cov \big ( \partial_{\theta_i} \gamma(\theta,X_0)\, , \, \partial_{\theta_j} \gamma(\theta,X_t)\big )\Big )_{1 \leq i,j\leq d} \\
\label{matrixF} F(\theta)&:=&-\frac 1 2 \, \Big ( \E \Big[\partial^2_{\theta_i \,\theta_j} \gamma(\theta,X_0)\Big] \Big )_{1 \leq i,j\leq d}.
\end{eqnarray}
Now, for any $m\in {\cal M}$ and $\theta \in \Theta$, denote:
\begin{eqnarray}\label{GG}
\qquad G_m(\theta)&:=&\frac 1 4 \, \Big (\sum_{t\in \Z} \cov \big ( \partial_{\theta_i} \gamma(\theta,X_0)\, , \, \partial_{\theta_j} \gamma(\theta,X_t)\big )\Big )_{i,j\in m} \\
\nonumber \Longrightarrow && G_m(\theta^*) = \frac 1 4 \, \Big ( \cov \big ( \partial_{\theta_i} \gamma(\theta^*,X_0)\, , \, \partial_{\theta_j} \gamma(\theta^*,X_0)\big )\Big )_{i,j\in m}\quad \mbox{if $m^*\subset m$}  \\
\label{FF} \qquad F_m(\theta)&:=&-\frac 1 2 \, \Big ( \E \Big[\partial^2_{\theta_i \,\theta_j} \gamma(\theta,X_0)\Big] \Big )_{i,j\in m}.
\end{eqnarray}
\begin{corollary}\label{corol}
Let $m\in {\cal M}$ and suppose that Assumptions {\bf A0}-{\bf A5} hold. 
Then, with $\theta_m^*$ defined in \eqref{thetam*},
\begin{equation}\label{tlcthetam}
\frac 1 {\sqrt{n}} \, \Big (  \partial _{\theta_j} L_n(\theta_m^*)  \Big )_{j\in m}  \limiteloin {\cal N} \big ( 0 \, , \, G_m(\theta^*_m) \big ).
\end{equation}
\end{corollary}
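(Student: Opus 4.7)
The plan is to recognize that
\begin{equation*}
\partial_{\theta_j}L_n(\theta_m^*) \;=\; -\frac{1}{2}\sum_{t=1}^n U_{t,j},\qquad U_{t,j}:=\partial_{\theta_j}\gamma(\theta_m^*,X_t),
\end{equation*}
so that \eqref{tlcthetam} reduces to a multivariate CLT for the $|m|$-dimensional stationary process $U_t=(U_{t,j})_{j\in m}$ with limiting covariance $4\,G_m(\theta_m^*)$. By the Cram\'er--Wold device it is enough to prove the univariate CLT for $\lambda^\top U_t$, $\lambda\in\R^{|m|}$. The first step is centering: since $\theta_m^*$ minimizes $R$ over the linear subspace $\Theta_m$, Assumptions {\bf A0}, {\bf A3}, {\bf A5} provide the uniform domination needed to differentiate $R$ under the expectation, and the resulting first order condition reads $\E[U_{0,j}]=\partial_{\theta_j}R(\theta_m^*)=0$ for every $j\in m$.

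Next I would establish stationarity, ergodicity and moment control of $(U_t)_{t\in\Z}$. Under Assumption {\bf A0}, the causal solution $X$ of \eqref{eq:serie2} is a Bernoulli shift of the i.i.d.\ sequence $(\xi_t)$ (see \cite{dou}), hence stationary and ergodic; as $U_t$ is a measurable function of $(X_{t-k})_{k\geq 0}$, the same holds for $(U_t)$. Expanding
\begin{equation*}
\partial_{\theta_j}\gamma(\theta,X_t)=-\,2\,\frac{(X_t-f_\theta^t)\,\partial_{\theta_j}f_\theta^t}{H_\theta^t}+\Bigl(1-\frac{(X_t-f_\theta^t)^2}{H_\theta^t}\Bigr)\frac{\partial_{\theta_j}H_\theta^t}{H_\theta^t},
\end{equation*}
then using {\bf A3} to bound $H_\theta^t\geq\underline h>0$, {\bf A0} (with $r>8$) to bound $\|X_0\|_r$ and $\|\xi_0\|_r$, and {\bf A5} to control the Lipschitz inputs via H\"older's inequality, one obtains $\|U_{0,j}\|_{4}<\infty$. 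This moment bound is more than sufficient for a CLT.

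The core step is the CLT for the stationary functional $(U_t)$. Being a causal functional of $(\xi_s)_{s\leq t}$, $U_0$ admits the Hannan projection decomposition $U_0=\sum_{k\geq 0}P_{-k}U_0$ along the innovation filtration $\mathcal{F}_t=\sigma(\xi_s,s\leq t)$, with $P_kZ:=\E[Z\mid\mathcal{F}_k]-\E[Z\mid\mathcal{F}_{k-1}]$. A coupling argument (replacing $\xi_{-k}$ by an independent copy) together with the polynomial decay of Assumption {\bf A5} gives physical dependence coefficients $\|P_{-k}U_0\|_2=O(k^{-\delta})$, summable since $\delta>7/2$; Hannan's CLT (equivalently, a Gordin martingale approximation, or the CLT for weakly dependent Bernoulli shifts of \cite{dou}) then yields
\begin{equation*}
\frac{1}{\sqrt{n}}\sum_{t=1}^n \lambda^\top U_t \;\limiteloin\; \mathcal{N}\bigl(0,\; \textstyle\sum_{t\in\Z}\cov(\lambda^\top U_0,\lambda^\top U_t)\bigr) \;=\; \mathcal{N}\bigl(0,\;4\,\lambda^\top G_m(\theta_m^*)\lambda\bigr),
\end{equation*}
after noting that Assumption {\bf A2} prevents the asymptotic variance from degenerating. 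Rescaling by $-1/2$ and invoking Cram\'er--Wold gives \eqref{tlcthetam}. The main obstacle is this last step: because $U_t$ is a rational, non-linear function of the past $X_s$'s, the effect of perturbing a single innovation $\xi_{t-k}$ must be propagated through $f_\theta^t$, $M_\theta^t$ and their $\theta$-derivatives; controlling this propagation is where the lower bound $\underline h$ of {\bf A3}, the moment condition $r>8$ of {\bf A0}, and the decay rate $\delta>7/2$ of {\bf A5} are all tightly used.
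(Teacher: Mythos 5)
Your proposal is correct in outline and reaches the right conclusion, but it travels a genuinely different technical road from the paper. The paper derives the corollary as a special case of Proposition \ref{theo0}: it first establishes $\tau$-weak dependence of the score process $\big(\partial_{\theta_j}\gamma(\theta,X_t)\big)_t$ via an explicit coupling bound (Lemmas \ref{lem0} and \ref{lem00}, giving $\tau^{(1)}(s)=O(s^{1-\delta}\log s)$), then invokes the CLT for $\tau$-weakly dependent stationary sequences of Dedecker--Doukhan with the moment exponent $\kappa=8/3$ supplied by Lemma \ref{lem:7}, and finally kills the centering term using $\partial_{\theta_j}R(\theta_m^*)=0$ for $j\in m$ --- exactly your first-order-condition step. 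You instead route through the Hannan/Gordin projective criterion along the innovation filtration. Both are legitimate CLTs for Bernoulli shifts and both ultimately rest on the same coupling estimate, so the approaches are close cousins; the paper's choice has the practical advantage that the $\tau^{(1)}$ coefficient only requires an $L^1$ coupling bound for the score (inequality \eqref{EE}), whereas your $L^2$ projection bound $\|P_{-k}U_0\|_2$ forces a more delicate moment bookkeeping for the quadratic term $(X_0-f_\theta^0)^2\partial_{\theta_j}H_\theta^0/(H_\theta^0)^2$.

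Two of your quantitative claims are overstated, though neither is fatal. First, $\|U_{0,j}\|_4<\infty$ is not guaranteed under {\bf A0}: Lemma \ref{lem:7} only yields $\|U_{0,j}\|_{r/3}<\infty$, and with $8<r<12$ this exponent lies in $(8/3,4)$; the argument should be run with $\kappa=r/3>8/3$, which is still enough for the CLT given the dependence decay. Second, the coupling does not give $\|P_{-k}U_0\|_2=O(k^{-\delta})$ but rather the convolution rate $O(k^{1-\delta}\log k)$ (the Lipschitz coefficients $\alpha_\ell$ must be convolved with the decay $\lambda_s$ of the $\tau$-coefficients of $X$ itself); summability survives because $\delta>7/2$. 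Finally, note that {\bf A2} is not needed for the CLT to hold --- a degenerate Gaussian limit would be acceptable --- and you omit the observation, recorded in the paper's proof, that when $m^*\subset m$ the score is a stationary martingale difference so that $G_m(\theta^*)$ reduces to the lag-zero covariance; this is not needed for \eqref{tlcthetam} as stated but explains the simplified form of $G_m(\theta^*)$ displayed after \eqref{GG}.
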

\noindent Using mainly this new result, we also obtain:
\begin{theo} \label{theomiss}
Under Assumptions {\bf A0}-{\bf A5}, for any $m\in {\cal M}$,
\begin{equation}\label{theothetamiss}
\sqrt n \, \big ( (\widehat \theta_m)_i-(\theta^*_m)_i \big )_{i \in m} \limiteloin {\cal N} \Big (0 \, , \,\big (F_m(\theta^*_m) \big )^{-1} G_m(\theta^*_m) \,\big (F_m(\theta^*_m) \big )^{-1}  \Big ),
\end{equation}
with $G_m$ and $F_m$ defined in \eqref{GG} and \eqref{FF}.
\end{theo}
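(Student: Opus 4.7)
The plan is to follow the standard M-estimator asymptotic-normality argument, adapted to the possibly misspecified setting in which $\theta_m^*$, rather than $\theta^*$, plays the role of target parameter. The essential new ingredient --- relative to the correctly-specified and overfitting cases already handled in \cite{barW} and \cite{bar2019} --- is Corollary \ref{corol}, which delivers a CLT for the score $n^{-1/2}(\partial_{\theta_j} L_n(\theta_m^*))_{j\in m}$ even when $m^*\not\subset m$. The first step is strong consistency $\widehat{\theta}_m \limitesur \theta_m^*$. Using Assumption \textbf{A5} to control $\|\widehat{\gamma}_n-\gamma_n\|_{\Theta_m}$ via the polynomial decay of the $\alpha_k$'s, combined with a uniform ergodic theorem for $\gamma_n$ on the compact $\Theta_m$, one obtains $\|\widehat{\gamma}_n - R\|_{\Theta_m} \to 0$ a.s.; together with the uniqueness of $\theta_m^*$ as the minimizer of $R$ on $\Theta_m$ (itself a consequence of Assumption \textbf{A1} restricted to $\Theta_m$), the standard argmin theorem yields the claim.

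With consistency in hand, the second step is a Taylor expansion of the score. Since $\widehat{\theta}_m$ eventually lies in the relative interior of $\Theta_m$, the first-order optimality $\partial_{\theta_j} \widehat{L}_n(\widehat{\theta}_m) = 0$ for $j\in m$ holds, and a component-wise second-order expansion at $\theta_m^*$ produces intermediate points $\bar{\theta}_m^{(j)} \to \theta_m^*$ a.s.\ such that
\[
\sqrt{n}\,\bigl((\widehat{\theta}_m)_i-(\theta_m^*)_i\bigr)_{i\in m} = -\Bigl[\tfrac{1}{n}\bigl(\partial^2_{\theta_i \theta_j} \widehat{L}_n(\bar{\theta}_m^{(j)})\bigr)_{i,j\in m}\Bigr]^{-1} \tfrac{1}{\sqrt{n}}\bigl(\partial_{\theta_j} \widehat{L}_n(\theta_m^*)\bigr)_{j\in m}.
\]
The third step is to replace $\widehat{L}_n$ by $L_n$ in both factors. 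One shows $n^{-1/2}\|\partial_\theta \widehat{L}_n(\theta_m^*) - \partial_\theta L_n(\theta_m^*)\|=o_\P(1)$ and a neighborhood-uniform Hessian analog; both follow from $L^r$ bounds on $\|\widehat{f}^t_\theta-f^t_\theta\|_\Theta$, $\|\widehat{M}^t_\theta-M^t_\theta\|_\Theta$ and their first two $\theta$-derivatives, whose summability is secured by $\delta>7/2$ in \textbf{A5}, together with \textbf{A3} (positivity of $H_\theta$) and the moment condition $r>8$ of \textbf{A0}. These bounds were established uniformly in $\theta\in\Theta$ in \cite{barW} and \cite{bar2019} and transfer without modification.

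The final step is the conclusion via Slutsky. Corollary \ref{corol} yields $n^{-1/2}\bigl(\partial_{\theta_j} L_n(\theta_m^*)\bigr)_{j\in m} \limiteloin \mathcal{N}\bigl(0,G_m(\theta_m^*)\bigr)$. The ergodic theorem applied to the stationary sequence $\bigl(\partial^2_{\theta_i\theta_j}\gamma(\theta, X_t)\bigr)_t$, combined with the consistency of Step 1 and an equicontinuity argument justified by \textbf{A5}, gives $\tfrac{1}{n}\bigl(\partial^2_{\theta_i\theta_j} L_n(\bar{\theta}_m^{(j)})\bigr)_{i,j\in m} \limitesur F_m(\theta_m^*)$; invertibility of this limit is provided by Assumption \textbf{A2} restricted to indices in $m$. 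Slutsky's lemma then produces the announced Gaussian limit with covariance $F_m(\theta_m^*)^{-1} G_m(\theta_m^*) F_m(\theta_m^*)^{-1}$.

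The principal obstacle is the first step: in contrast with the correctly-specified case, $\theta_m^*$ is only a minimizer of the population risk \emph{restricted} to $\Theta_m$, and Assumption \textbf{A1} --- which asserts global identifiability of $\theta^*$ --- does not directly yield uniqueness of this restricted minimizer. One must argue that any competing minimizer would contradict \textbf{A1} via the strict convexity of the Gaussian KL-type contrast along a suitable reparametrization; Remark \ref{Ident} illustrates how delicate this can become for ARMA-type families. Once restricted identifiability is secured, the rest of the argument is a fairly routine transplant of the machinery developed in \cite{barW} and \cite{bar2019}.
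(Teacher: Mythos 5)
Your proposal follows essentially the same route as the paper's proof: strong consistency of $\widehat\theta_m$ toward $\theta_m^*$, a Taylor--Lagrange expansion of the score around $\theta_m^*$, replacement of $\widehat L_n$ by $L_n$ via the bounds of \cite{barW}, and conclusion by Slutsky combined with Corollary \ref{corol}. The only difference is that the paper outsources the consistency step entirely to \cite{bar2019} rather than re-deriving it (and hence does not engage with the restricted-identifiability subtlety you flag), so your proposal is, if anything, more explicit on that point while being otherwise the same argument.
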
 
\noindent Hence, even in the misspecified case, $\widehat \theta_m$ satisfies a central limit theorem. We will use this result several times, in particular to prove that the probability of selecting a misspecified model tends quickly enough towards $0$. Another technical result will also be useful for the sequel:
\begin{proposition}\label{2+}
Under Assumptions {\bf A0}-{\bf A5}, with $8/3<r'\leq (8+r)/6$ and $r'<2(\delta-1)$ where $\delta>7/2$ is given in Assumption {\bf A5} and for any $m\in {\cal M}$, then we have
\begin{equation}\label{ineg2+}
\sup_{n\in \N^*} \Big \| \sqrt n \, \big ( (\widehat \theta_m)_i-(\theta^*_m)_i \big )_{i \in m} \Big \|_{r'} <\infty.
\end{equation}
\end{proposition}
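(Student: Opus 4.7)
The strategy is the standard one for M-estimators: linearize the first-order condition and bound the two resulting factors via Hölder's inequality in $L^{r'}$. Set $\widehat\Phi_n(\theta) := (\partial_{\theta_j}\widehat L_n(\theta))_{j\in m}$ and $\widehat H_n(\theta) := \frac{1}{n}(\partial^2_{\theta_i\theta_j}\widehat L_n(\theta))_{i,j\in m}$. Using the a.s.\ consistency $\widehat\theta_m \to \theta_m^*$ (a consequence of Assumptions \textbf{A0}--\textbf{A5} exactly as in \cite{barW}), I would localize on the event $\Omega_n^{\eta} = \{\|\widehat\theta_m - \theta_m^*\|\le \eta\}$ for a small $\eta$ chosen so that $\theta_m^*$ is interior to $\Theta_m$ and $\widehat H_n$ is a.s.\ invertible there. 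On $\Omega_n^\eta$, the first-order condition $\widehat\Phi_n(\widehat\theta_m)=0$ combined with a mean-value expansion gives
\[
\sqrt n\,\bigl((\widehat\theta_m)_i - (\theta_m^*)_i\bigr)_{i\in m} \;=\; -\,\bigl(\widehat H_n(\bar\theta_{m,n})\bigr)^{-1}\cdot\frac{1}{\sqrt n}\,\widehat\Phi_n(\theta_m^*),
\]
for some $\bar\theta_{m,n}$ on the segment joining $\widehat\theta_m$ and $\theta_m^*$, so that $\bar\theta_{m,n} \to \theta_m^*$ a.s. On the complementary event, compactness of $\Theta_m$ gives the deterministic bound $\|\sqrt n(\widehat\theta_m-\theta_m^*)\|\le C\sqrt n$, and polynomial tail estimates for consistency (accessible from the subexponential behavior of the QMLE score via $\alpha_k=O(k^{-\delta})$) make $\mathbb{P}((\Omega_n^\eta)^c)$ decay fast enough that this contribution is negligible in $L^{r'}$.

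Next I would control the inverse Hessian in $L^8$: the approximation error between $\widehat H_n(\bar\theta_{m,n})$ and $\frac{1}{n}\partial^2_{\theta^2}L_n(\bar\theta_{m,n})$ is controlled uniformly in $n$ via the summability $\sum_k \alpha_k(\partial^2_{\theta^2}f_\theta,\Theta)+\alpha_k(\partial^2_{\theta^2}M_\theta,\Theta)<\infty$, so that, on an event of arbitrarily large probability provided by Egorov's theorem (see the remark following Assumption \textbf{A4}), Assumption \textbf{A4} yields
\[
\sup_n\bigl\|\bigl(\widehat H_n(\bar\theta_{m,n})\bigr)^{-1}\bigr\|_8 \;<\; \infty.
\]

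Then I would bound $n^{-1/2}\widehat\Phi_n(\theta_m^*)$ in $L^p$ for a suitable $p$. Decomposing
\[
\tfrac{1}{\sqrt n}\widehat\Phi_n(\theta_m^*)=\tfrac{1}{\sqrt n}\Phi_n(\theta_m^*)+\tfrac{1}{\sqrt n}\bigl(\widehat\Phi_n(\theta_m^*)-\Phi_n(\theta_m^*)\bigr),
\]
the first piece is a normalized partial sum of the stationary ergodic sequence $\partial_\theta\gamma(\theta_m^*,X_t)$, whose summands have moments of order $p$ for any $p\le r/2$ (the worst term is the quadratic $(X_t-f^t_{\theta_m^*})^2(H^t_{\theta_m^*})^{-2}\partial_\theta H^t_{\theta_m^*}$, and $\|X_t\|_r<\infty$ by \textbf{A0}). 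Writing this sequence as a near-martingale plus a telescoping remainder (Gordin/Volný decomposition) controlled by $\alpha_k(\partial_\theta f_\theta,\Theta)+\alpha_k(\partial_\theta M_\theta,\Theta)=O(k^{-\delta})$, a Burkholder--Rosenthal inequality gives $\sup_n\|n^{-1/2}\Phi_n(\theta_m^*)\|_p<\infty$ for $p\le r/2$ and $p<2(\delta-1)$. The approximation error term is bounded by $n^{-1/2}\sum_{t=1}^n\bigl(\sum_{k\ge t}\alpha_k\bigr)|X_{t-k}|$-type sums, whose $L^p$ bound is finite provided $p<2(\delta-1)$, which is exactly where the second constraint $r'<2(\delta-1)$ enters. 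Applying Hölder with $1/r'=1/p+1/8$ (so $p=8r'/(8-r')$), the admissible range of $r'$ is exactly $8/3<r'\le (8+r)/6$ together with $r'<2(\delta-1)$; combining with the Hessian bound closes the proof.

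The main obstacle is the third step: producing a Rosenthal/Burkholder-type moment bound on $n^{-1/2}\Phi_n(\theta_m^*)$ in the misspecified case $m^*\not\subset m$, where $\partial_\theta\gamma(\theta_m^*,X_t)$ is no longer a martingale difference, while simultaneously keeping track in $L^p$ of the approximation gap $\widehat\Phi_n-\Phi_n$. Everything else reduces to a careful bookkeeping of the decay rate $\delta$ and the moment order $r$ via Hölder.
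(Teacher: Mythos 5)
Your proposal is correct and follows essentially the same route as the paper: a mean-value expansion of the score around $\theta_m^*$, an $L^8$ bound on the inverse Hessian supplied by Assumption \textbf{A4} (with the Egorov localization already anticipated in the remark following that assumption), a Rosenthal-type moment bound for the partial sums of the (non-martingale, possibly misspecified) score $\partial_\theta\gamma(\theta_m^*,X_t)$ obtained from its $\tau$-weak dependence — the paper invokes Proposition 5.5 of \cite{DDLLLP} where you propose a Gordin/Voln\'y plus Burkholder--Rosenthal argument, which is the same family of tools — a separate $L^{r/3}$ control of the approximation gap between $\partial_\theta L_n$ and $\partial_\theta\widehat L_n$ via the summability of the Lipschitz coefficients, and finally H\"older to combine the two factors. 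The only differences are cosmetic (you expand $\widehat L_n$ and use $\partial_\theta\widehat L_n(\widehat\theta_m)=0$ exactly, whereas the paper expands $L_n$ and bounds $\partial_\theta L_n(\widehat\theta_m)$ through the same approximation gap in its Lemma on $\|n^{-1/2}\partial_\theta L_n(\widehat\theta_m)\|_{r/3}$) together with some imprecision in the exponent bookkeeping (the integrability of the score is $r/3$, not $r/2$, and your H\"older split $1/r'=1/p+1/8$ does not by itself yield the stated range $r'\le(8+r)/6$), none of which changes the substance of the argument.
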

\noindent Note that we also have $\sup_{n\in \N^*} \big \| \sqrt n \, \big ( (\widehat \theta_m)_i-(\theta^*_m)_i \big )_{i \in m} \big \|_{2} <\infty$. This result will be essential for establishing the asymptotic behavior of the expectation of the ideal penalty.
\section{Efficient model selection}\label{sec:resul}

The expectation of the ideal penalty \eqref{eq:penid} has been computed (or asymptotically approximated) in several frameworks (see \cite{mallows}, \cite{akaike}, \cite{schwarz}, \cite{Hurvich1989}, \cite{cavanaugh}; etc) and it is most often proportional to the dimension of the model (denoted $|m|$ in the sequel).
\begin{itemize}
\item the penalty is $2 \, |m| \,\sigma^2 /n$ in the regression setting, leading to the famous Mallows's $C_p$ criterion \cite{mallows};
\item the penalty is $2 \, |m|/n$ in the density estimation framework and others, leading to the famous AIC criterion \cite{akaike};
\item the penalty is $|m| \, \log(n) /n$ in the Bayesian density estimation setting and other frameworks, leading to the famous BIC criterion \cite{schwarz} ;
\item the penalty is $\mbox{Trace}\big(B_n\, A_n^{-1}\big)/n$ where $A_n$ is the opposite of the Hessian matrix of the log-likelihood  and $B_n$ the Fisher Information matrix in a general framework issued from a Bayesian setting \cite{lvliu}.
\end{itemize}

\noindent In order to approximate \eqref{eq:penid} in this framework, let first provide a decomposition of this term in order to facilitate the computation. For any model $m\in {\cal M}$, write 
\begin{equation}\label{eq:decom}
\mbox{pen}_{id}(m): =R(\widehat{\theta}_m)-\widehat{\gamma}_n (\widehat{\theta}_m)= I_1(m)+I_2(m)+I_3(m),
\end{equation}
$$
\mbox{with}\quad \left \{ \begin{array}{lll}I_1(m)&:=&R(\widehat{\theta}_m)-R(\theta^*_m) \\ I_2(m)&:=&\widehat \gamma_n(\theta^*_m)-\widehat \gamma_n(\widehat{\theta}_m) \\
I_3(m)&:=&R(\theta^*_m) -\widehat \gamma_n(\theta^*_m)
\end{array} \right . .
$$
\noindent Next we provide a preliminary result about the asymptotic behavior of the terms $I_1(m)$ and $I_2(m)$. 
Then we obtain:
\begin{lemma}\label{prop1}
Under Assumptions {\bf A0}-{\bf A5}, for any model $m \in {\cal M}$, there exists a probability distribution $U^*(m)$ such that 
\begin{eqnarray}	
\nonumber &1.\quad & n \, I_1(m)=n\, \big ( R(\widehat{\theta}_{m})-R(\theta^*_m) \big ) \limiteloin U^*(m) \\
\label{eq:pr1} && \hspace{1.7cm} \mbox{and}\quad \E \big [ n \, I_1(m)\big ]\limiten \E [U^*(m)]=- \mbox{Trace}\Big( \big (F_m(\theta^*_m)\big )^{-1}  \,G_m(\theta^*_m)\Big ).\qquad \\
\nonumber &2.\quad &n \, I_2(m)=n\, \big ( \widehat \gamma_n(\theta^*_m)-\widehat \gamma_n(\widehat{\theta}_m) \big ) \limiteloin U^*(m) \\
\label{eq:pr2}	 && \hspace{1.7cm} \mbox{and}\quad \E \big [ n \, I_2(m)\big ]\limiten - \mbox{Trace}\Big( \big (F_m(\theta^*_m)\big )^{-1}  \,G_m(\theta^*_m)\Big ).
\end{eqnarray}
\end{lemma}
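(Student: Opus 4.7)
The plan is to second-order Taylor expand $R$ around $\theta^*_m$ and $\widehat\gamma_n$ around $\widehat\theta_m$. In each case the first-order term will vanish along the subspace $\Theta_m$---for $R$ by the minimization property \eqref{thetam*} of $\theta^*_m$, and for $\widehat\gamma_n$ by the ERM property \eqref{eq:qmle} of $\widehat\theta_m$---so that $n I_1(m)$ and $n I_2(m)$ both reduce to the same quadratic form in $\sqrt n(\widehat\theta_m - \theta^*_m)$. Theorem \ref{theomiss} will supply the limit law of this scaled error, and Slutsky's lemma will then deliver a common weak limit $U^*(m)$ for the two sequences.

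More concretely, since $\partial^2_\theta R(\theta) = \E[\partial^2_\theta \gamma(\theta, X_0)] = -2 F_m(\theta)$, I would write the Taylor remainder form of $I_1$ as
\begin{equation*}
n I_1(m) = \tfrac{1}{2}\bigl[\sqrt n (\widehat\theta_m - \theta^*_m)\bigr]^T \bigl[\partial^2_\theta R(\widetilde\theta_n)\bigr]_m \bigl[\sqrt n (\widehat\theta_m - \theta^*_m)\bigr],
\end{equation*}
with $\widetilde\theta_n$ between $\widehat\theta_m$ and $\theta^*_m$. Strong consistency of $\widehat\theta_m$ together with continuity of $F_m$ would force the middle matrix to $-2 F_m(\theta^*_m)$ in the limit, yielding convergence in distribution of $n I_1(m)$ to $U^*(m) := -Z^T F_m(\theta^*_m) Z$, where $Z \sim \mathcal N\bigl(0, F_m(\theta^*_m)^{-1} G_m(\theta^*_m) F_m(\theta^*_m)^{-1}\bigr)$. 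The identity $\E[Z^T A Z] = \operatorname{Trace}(A \operatorname{Cov}(Z))$ immediately gives $\E[U^*(m)] = -\operatorname{Trace}\bigl(F_m(\theta^*_m)^{-1} G_m(\theta^*_m)\bigr)$. For $I_2$ the analogous expansion of $\widehat\gamma_n$ around $\widehat\theta_m$ produces the same quadratic form with $[\partial^2_\theta \widehat\gamma_n(\widetilde\theta'_n)]_m$ in the middle; an ergodic-theorem argument applied to $\partial^2 \gamma(\theta^*_m, X_t)$, combined with Assumption \textbf{A5} to control the error from replacing $\gamma$ by the computable $\widehat\gamma$ (setting $u=0$ for the missing past), will show this Hessian also converges a.s.\ to $-2 F_m(\theta^*_m)$, so that $n I_2(m)$ shares the same weak limit $U^*(m)$.

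The main obstacle is upgrading the distributional limits to convergence of the expectations, and this is exactly where Proposition \ref{2+} enters: it gives $\sqrt n(\widehat\theta_m - \theta^*_m)$ bounded in $L^{r'}$ with $r' > 2$, hence $n\|\widehat\theta_m - \theta^*_m\|^2$ bounded in $L^{r'/2}$ with $r'/2 > 1$, providing the uniform integrability needed to conclude $\E[n I_1(m)] \to \E[U^*(m)]$. For $\E[n I_2(m)]$ the same argument will work once the random Hessian appearing in the middle is shown to be uniformly bounded in a sufficiently high $L^p$ norm; this follows from Assumption \textbf{A5} (summability $\sum \alpha_j < \infty$ with rate $j^{-\delta}$, $\delta > 7/2$) combined with the moment bound $\|\xi_0\|_r < \infty$, $r > 8$, in Assumption \textbf{A0}. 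I expect the subtle bookkeeping between $\gamma$ and $\widehat\gamma$ in the $I_2$ part, and verifying that both uniform integrability arguments admit the same majorant, to be the most delicate parts of the write-up.
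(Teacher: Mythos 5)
Your proposal follows essentially the same route as the paper's proof: a second-order Taylor--Lagrange expansion of $R$ at $\theta^*_m$ and of $\widehat\gamma_n$ at $\widehat\theta_m$ (with vanishing gradients), convergence of the intermediate Hessians to $-2F_m(\theta^*_m)$, the CLT for $\sqrt n(\widehat\theta_m-\theta^*_m)$ to identify the common quadratic limit $U^*(m)$, and Proposition \ref{2+} plus H\"older to upgrade to convergence of expectations. Your parametrization of $U^*(m)$ as $-Z^{\top}F_m(\theta^*_m)Z$ with $Z$ having the sandwich covariance is distributionally identical to the paper's $-Z^{\top}G_m^{1/2}F_m^{-1}G_m^{1/2}Z$ with $Z$ standard normal, and your explicit appeal to $L^{r'/2}$-boundedness with $r'/2>1$ for uniform integrability is the correct reading of what Proposition \ref{2+} is used for.
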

\noindent The proof of this lemma, as well as all the other proofs, can be found in Section \ref{sec:proof}. This result leads to our first main result:
\begin{proposition}\label{prop0}
Under Assumptions {\bf A0}-{\bf A5}, there exists $N_0 \in \N$ such as for any $n \geq N_0$,
\begin{equation}\label{minpen}
\argmin_{m \in {\cal M}} \E \big [ \ell(\widehat \theta_m,\theta^*)\big ]=m^*.
\end{equation}
\end{proposition}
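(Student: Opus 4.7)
The strategy is to compute the exact asymptotics of $\mathbb{E}[\ell(\widehat{\theta}_m, \theta^*)]$ for every $m \in \mathcal{M}$ and show that $m^*$ uniquely achieves the minimum. First I would write
\begin{equation*}
\mathbb{E}[\ell(\widehat{\theta}_m, \theta^*)] = \mathbb{E}[I_1(m)] + \Delta_m, \qquad \Delta_m := R(\theta_m^*) - R(\theta^*) \ge 0,
\end{equation*}
where non-negativity of $\Delta_m$ comes from $\theta^*$ being the unique minimizer of $R$ on $\Theta$ under Assumption \textbf{A1}. Lemma~\ref{prop1} then yields $n \mathbb{E}[I_1(m)] \to C_m := -\mbox{Trace}\bigl( (F_m(\theta_m^*))^{-1} G_m(\theta_m^*) \bigr)$, and this constant is strictly positive because $-F_m(\theta_m^*)$ is positive definite (up to the factor $1/2$ it is the Hessian of $R|_{\Theta_m}$ at its minimum, using \textbf{A2}) and $G_m(\theta_m^*)$ is positive definite as a non-degenerate long-run score covariance.

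Next I would split into three cases. For $m = m^*$, $\theta_{m^*}^* = \theta^*$ gives $\Delta_{m^*} = 0$, so $\mathbb{E}[\ell(\widehat{\theta}_{m^*}, \theta^*)] \sim C_{m^*}/n$. For $m$ misspecified ($m^* \not\subset m$), there exists an index $i \in m^* \setminus m$ at which $\theta^*$ has a non-zero component (by minimality of $m^*$), whereas every $\theta \in \Theta_m$ has $\theta_i = 0$; hence $\theta^* \notin \Theta_m$, forcing $\Delta_m > 0$ by uniqueness of the minimizer, and $\mathbb{E}[\ell(\widehat{\theta}_m, \theta^*)] \to \Delta_m > 0$, which eventually dominates $C_{m^*}/n$. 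For $m \supsetneq m^*$ (overfitting) one has $\theta_m^* = \theta^*$, hence $\Delta_m = 0$ and $\mathbb{E}[\ell(\widehat{\theta}_m, \theta^*)] \sim C_m/n$; the crucial remaining step is the strict inequality $C_m > C_{m^*}$.

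To prove $C_m > C_{m^*}$ in the overfitting case I would carry out a block-matrix / Schur-complement computation. Writing $m = m^* \sqcup k$ with $k := m \setminus m^*$, partition
\begin{equation*}
-F_m(\theta^*) = \begin{pmatrix} P & R \\ R^{\top} & Q \end{pmatrix}, \qquad
G_m(\theta^*) = \begin{pmatrix} G_{m^*} & G_{12} \\ G_{12}^{\top} & G_{22} \end{pmatrix},
\end{equation*}
with $P := -F_{m^*}(\theta^*)$, and let $S := Q - R^{\top} P^{-1} R$ denote the (positive definite) Schur complement. Applying the standard block-inverse formula to $(-F_m)^{-1}$ and computing the trace against $G_m$, the extra contributions beyond $\mbox{Trace}(P^{-1} G_{m^*}) = C_{m^*}$ regroup cleanly into
\begin{equation*}
C_m - C_{m^*} = \mathbb{E}\bigl[ W^{\top} S^{-1} W \bigr], \qquad W := R^{\top} P^{-1} Z_1 - Z_2,
\end{equation*}
for $Z = (Z_1, Z_2) \sim \mathcal{N}(0, G_m(\theta^*))$. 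Since $S^{-1}$ is positive definite and the linear map $Z \mapsto W = (R^{\top} P^{-1}, -I_k) Z$ has full row rank $|k|$, the matrix $\mbox{Cov}(W) = (R^{\top} P^{-1}, -I_k) G_m(\theta^*) (R^{\top} P^{-1}, -I_k)^{\top}$ is positive definite, so $\mathbb{E}[W^{\top} S^{-1} W] = \mbox{Trace}\bigl( S^{-1}\, \mbox{Cov}(W) \bigr) > 0$.

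Combining the three cases and using that $\mathcal{M}$ is finite to pick $N_0$ larger than the finitely many thresholds required yields the claim. The main obstacle is the Schur-complement identity in the overfitting case: it is the only ingredient not supplied directly by Lemma~\ref{prop1} and is what genuinely separates $m^*$ from each overfitted competitor at the $1/n$ scale, since without it one could only conclude $C_m \geq C_{m^*}$ and would be unable to rule out ties.
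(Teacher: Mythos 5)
Your proposal is correct and follows the same overall skeleton as the paper's proof: decompose $\E[\ell(\widehat\theta_m,\theta^*)]=\E[I_1(m)]+\big(R(\theta^*_m)-R(\theta^*)\big)$, use Lemma~\ref{prop1} to get the $1/n$ asymptotics of $\E[I_1(m)]$, dispose of misspecified models through the strictly positive bias term $R(\theta^*_m)-R(\theta^*)=2\,DK_L(\theta^*\|\theta^*_m)>0$, and separate $m^*$ from overfitted models by comparing $C_m=-\mbox{Trace}\big((F_m(\theta^*))^{-1}G_m(\theta^*)\big)$ with $C_{m^*}$. The one genuine difference is in that last comparison. The paper simply asserts the strict inequality $C_{m^*}<C_m$ for $m^*\subset m$, $m\neq m^*$, with the one-line justification that all eigenvalues of $-F(\theta^*)^{-1}G(\theta^*)$ are positive; since the inverse of a principal submatrix is not the corresponding submatrix of the inverse, this does not by itself yield monotonicity of the trace in $m$. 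Your Schur-complement computation, giving $C_m-C_{m^*}=\mbox{Trace}\big(S^{-1}\,\mbox{Cov}(W)\big)>0$ with $W=R^{\top}P^{-1}Z_1-Z_2$ and $(R^{\top}P^{-1},-I_k)$ of full row rank, is exactly the missing detail, and it is correct: it both confirms the paper's claim and makes the argument self-contained. The remaining steps (positivity of $C_m$ via $\mbox{Trace}\big((-F_m)^{-1/2}G_m(-F_m)^{-1/2}\big)>0$, finiteness of $\mathcal{M}$ to choose $N_0$) match the paper. The only caveat is cosmetic: in the misspecified case the paper works directly with the subspace condition $\Theta_{m^*}\not\subset\Theta_m$ and the cited Kullback--Leibler identity from \cite{bar2019}, whereas you phrase it through a minimality assumption on $m^*$ guaranteeing $\theta^*\notin\Theta_m$; these are the same standard hypothesis, so nothing is lost.
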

\noindent Another application of Lemma \ref{prop1} is
devoted to an expansion of the expectation of the ideal penalty defined in \eqref{eq:penid}:
\begin{proposition}\label{prop}
Under  Assumptions {\bf A0}-{\bf A5} and for any $m\in {\cal M}$, there exists a bounded sequence $(v^*_n)_{n\in \N^*}$, not depending on $m$ when $m^* \subset m$, and satisfying
\begin{equation}\label{penid}
\E\big[ \mbox{pen}_{id}(m)\big] 
\limitesimn -\frac 2 {n} \, \mbox{Trace}\Big( \big (F_m(\theta^*_m)\big )^{-1}  \,G_m(\theta^*_m)\Big )\Big )+ \frac {v^*_n} n.
\end{equation}
\end{proposition}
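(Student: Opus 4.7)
The plan is to exploit the decomposition \eqref{eq:decom}, $\mbox{pen}_{id}(m)=I_1(m)+I_2(m)+I_3(m)$, in concert with Lemma \ref{prop1}. That lemma already gives the two convergences $n\E[I_1(m)]\to -\mbox{Trace}\big((F_m(\theta^*_m))^{-1}G_m(\theta^*_m)\big)$ and $n\E[I_2(m)]\to -\mbox{Trace}\big((F_m(\theta^*_m))^{-1}G_m(\theta^*_m)\big)$. Multiplying by $n^{-1}$ and summing recovers the principal term $-\frac{2}{n}\mbox{Trace}(\cdot)$ of the announced expansion, up to residuals that tend to $0$. All that remains is to prove that $n\E[I_3(m)]$ is a bounded sequence and, when $m^*\subset m$, does not depend on $m$; these contributions, together with the $o(1)$ residuals from $I_1$ and $I_2$, will be lumped into $v^*_n$.

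To handle $I_3(m)=R(\theta^*_m)-\widehat{\gamma}_n(\theta^*_m)$, I would insert the theoretical empirical contrast $\gamma_n(\theta^*_m)=\frac{1}{n}\sum_{t=1}^n \gamma(\theta^*_m,X_t)$ and split
$I_3(m)=\bigl[R(\theta^*_m)-\gamma_n(\theta^*_m)\bigr]+\bigl[\gamma_n(\theta^*_m)-\widehat{\gamma}_n(\theta^*_m)\bigr]$. The stationarity guaranteed by Assumption \textbf{A0} yields $\E[\gamma(\theta^*_m,X_t)]=R(\theta^*_m)$ for every $t$, so the first bracket has zero expectation. The second bracket is the approximation error stemming from replacing unobserved past values by $0$; using the Lipschitz-type bounds of Assumption \textbf{A5} in the form $|f_\theta^t-\widehat{f}_\theta^t|\le \sum_{k\ge t}\alpha_k(f_\theta,\Theta)|X_{t-k}|$ and the analogous estimate for $M_\theta$, combined with the lower bound $\inf_\Theta H_\theta\ge \underline{h}$ from \textbf{A3} and the moment bounds on $X_t$ from \textbf{A0}, one gets a pointwise estimate $\E|\gamma(\theta^*_m,X_t)-\widehat{\gamma}(\theta^*_m,X_t)|\le C\,t^{-(\delta-1)}$. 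Since $\delta>7/2$, the series $\sum_{t\ge 1}t^{-(\delta-1)}$ converges, so $n\E[I_3(m)]=\sum_{t=1}^n \E[\gamma(\theta^*_m,X_t)-\widehat{\gamma}(\theta^*_m,X_t)]$ is uniformly bounded in $n$.

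In the overfitted case $m^*\subset m$, the identity $\theta^*_m=\theta^*$ makes $I_3(m)=R(\theta^*)-\widehat{\gamma}_n(\theta^*)$ literally the same random variable for every such $m$; the same observation applies to the residuals in Lemma \ref{prop1}, which depend on $m$ only through $\theta^*_m=\theta^*$. Defining $v^*_n$ by the identity $\E[\mbox{pen}_{id}(m)]=-\frac{2}{n}\mbox{Trace}\big((F_m(\theta^*_m))^{-1}G_m(\theta^*_m)\big)+\frac{v^*_n}{n}$ then produces a bounded sequence satisfying the $m$-independence claim.

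The main technical obstacle is the sharpness of the per-$t$ bound on $\gamma(\theta^*_m,X_t)-\widehat{\gamma}(\theta^*_m,X_t)$: both the quadratic numerator $(X_t-f_\theta^t)^2/H_\theta^t$ and the logarithmic term $\log H_\theta^t$ must be linearized in terms of $f_\theta^t-\widehat{f}_\theta^t$ and $H_\theta^t-\widehat{H}_\theta^t$, then combined with the $L^r$ moments of $\xi_0$ (hence of $X_t$) and the lower bound $\underline{h}$ in order to produce the $t^{-(\delta-1)}$ tail needed for summability. These manipulations are routine in the QMLE literature for affine causal processes, but they must be carried out carefully to ensure the constants are uniform in $\theta$ over $\Theta$, so that the bound transfers to $\theta^*_m$ for every $m\in\mathcal{M}$.
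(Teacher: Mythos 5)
Your proposal is correct and follows essentially the same route as the paper: the same decomposition $\mbox{pen}_{id}(m)=I_1(m)+I_2(m)+I_3(m)$, Lemma \ref{prop1} for the trace term, the split $I_3=\bigl[R(\theta^*_m)-\gamma_n(\theta^*_m)\bigr]+\bigl[\gamma_n(\theta^*_m)-\widehat\gamma_n(\theta^*_m)\bigr]$ with zero expectation of the first bracket by stationarity, and the $O(t^{1-\delta})$ coupling bound summed over $t$ to define $v^*_n=n\,\E[I_3(m)]$ (with the $m$-independence for $m^*\subset m$ coming from $\theta^*_m=\theta^*$ and the cancellation of the first $t$ terms in $\gamma_n-\widehat\gamma_n$). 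The only small slip is your claim that the residuals of Lemma \ref{prop1} depend on $m$ only through $\theta^*_m$ — they also involve $\widehat\theta_m$, which varies with $m$ even in the overfitted case — but this is immaterial since those residuals are $o(1/n)$ and are absorbed by the asymptotic equivalence rather than by $v^*_n$.
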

\noindent Note that the Slope Heuristic Procedure, which allows to estimate a so called \textit{minimal penalty} (see \cite{arlot}) consists in evaluating the slope of a linear regression of $\widehat \gamma_n(\widehat{\theta}_m)$ onto $|m|$ for $m^*\subset m$ and this is equivalent to estimating the slope of $\displaystyle \frac 1 {n} \,\mbox{Trace}\big(G_m(\theta^*_m)F_m(\theta^*_m)^{-1} \big)$ onto $|m|$ from \eqref{eq:pr2}. We will see that $\mbox{Trace}\big(G_m(\theta^*_m)F_m(\theta^*_m)^{-1} \big)$ behaves as a linear function of $|m|$ in many cases, which also gives legitimacy to this approach in the case of time series after having obtained it in the case of linear regression. The minimal penalty is then $-2 \times$ the estimated slope and this finally corresponds to an approximation of $\E\big[ \mbox{pen}_{id}(m)\big]$. 
The trace of the matrix mentioned above is easily computable in some cases using the explicit forms of the matrices $F(\theta^*)$, $G(\theta^*)$ in \cite{barW}.   Hence, even if the ideal penalty cannot be explicitely obtained, we can replace it with its expectation, {\it i.e.} this trace of  matrix. Then, define for $m\in {\cal M}$, 
\begin{equation}\label{widepen}
\widetilde {\textnormal{pen}}(m):=-\frac 2 {n} \, \mbox{Trace}\Big( \big (F(\theta^*_m)\big )^{-1}  \,G(\theta^*_m)\Big ).
\end{equation}  
As shown in Section \ref{Exam}, this trace is proportional to the dimension of the model $|m|$ in some cases, but could be more complex functions of $|m|$. In case of Gaussian process, we will also see that it corresponds to the AIC penalty, $2/n$. However, we will see that this penalty does not provide a consistent model selection and contrary to the ideal penalty, does not provide  an optimal efficient model criterion.  \\
Before this, we study the probability of not selecting a misspecified model under a general condition on the penalty that is satisfied for instance by $\widetilde {\textnormal{pen}}$ or by BIC criterion:
\begin{theo}\label{theo:0}
Under Assumptions {\bf A0}-{\bf A5} and if for any $\varepsilon>0$,
\begin{equation}\label{condpen}
n \, \P \big ( \textnormal{pen}(m) \geq \varepsilon \big )\limiten 0 \quad \mbox{for any $m\in {\cal M}$}.
\end{equation} 
Then, 
\begin{equation}\label{theo0pen}
n \, \P \big (m^* \not \subset  \widehat m_{\textnormal{pen}} \big ) \limiten 0.
\end{equation}
\end{theo}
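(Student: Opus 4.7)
Since $|\mathcal{M}|$ is finite, a union bound reduces the claim to showing that $n\,\P(\widehat{m}_{\textnormal{pen}} = m) \to 0$ for each fixed misspecified $m \in \mathcal{M}$ (i.e. with $m^* \not\subset m$). On $\{\widehat{m}_{\textnormal{pen}} = m\}$ one has $\widehat{C}_{\textnormal{pen}}(m) \leq \widehat{C}_{\textnormal{pen}}(m^*)$, and since $\textnormal{pen}(m) \geq 0$ this yields
$$D_n := \widehat{\gamma}_n(\widehat{\theta}_m) - \widehat{\gamma}_n(\widehat{\theta}_{m^*}) \leq \textnormal{pen}(m^*).$$
Decompose $D_n = \Delta_m + A_n + B_n$, where $\Delta_m := R(\theta^*_m) - R(\theta^*) > 0$ is strictly positive by misspecification and the uniqueness of the minimizer in \eqref{theta*}--\eqref{thetam*}; $A_n := I_2(m^*) - I_2(m)$ collects the two ERM remainders controlled by Lemma \ref{prop1}; and $B_n := [\widehat{\gamma}_n(\theta^*_m) - R(\theta^*_m)] - [\widehat{\gamma}_n(\theta^*) - R(\theta^*)]$ is a centered empirical average. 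Setting $c := \Delta_m/2$,
$$\{D_n \leq \textnormal{pen}(m^*)\} \subset \{|A_n|\geq c/2\} \cup \{|B_n|\geq c/2\} \cup \{\textnormal{pen}(m^*) \geq c\},$$
and the third term is directly $o(1/n)$ by the hypothesis \eqref{condpen}.

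For $B_n$, put $Y_t := \gamma(\theta^*_m, X_t) - \gamma(\theta^*, X_t) - \E[\gamma(\theta^*_m, X_0) - \gamma(\theta^*, X_0)]$; because $\gamma$ depends quadratically on $X$ and $\|\xi_0\|_r < \infty$ with $r > 8$, one has $\E|Y_0|^q < \infty$ for some $q>4$, and the decay $O(j^{-\delta})$ with $\delta > 7/2$ from Assumption A5 makes the stationary sequence $(Y_t)$ sufficiently weakly dependent for a Rosenthal-type moment inequality to give $\E\bigl|\tfrac{1}{n}\sum_t Y_t\bigr|^q = O(n^{-q/2})$; the approximation error $\widehat{\gamma}_n - \gamma_n$ is absorbed using A5 at the same or faster rate, so Markov produces $\P(|B_n|\geq c/2) = O(n^{-q/2}) = o(1/n)$ since $q/2 > 2$. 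For $A_n$, a Taylor expansion of $\widehat{\gamma}_n$ around $\widehat{\theta}_m$ (where its gradient vanishes on $\Theta_m$) gives
$$n\,I_2(m) = \tfrac{1}{2}\bigl[\sqrt n\,(\widehat{\theta}_m - \theta^*_m)\bigr]^{\!\top}\!\widehat F_n\bigl[\sqrt n\,(\widehat{\theta}_m - \theta^*_m)\bigr] + o_p(1),$$
with $\widehat F_n$ an empirical Hessian on the segment $[\theta^*_m, \widehat{\theta}_m]$. Proposition \ref{2+} supplies $\sup_n\|\sqrt n(\widehat{\theta}_m - \theta^*_m)\|_{r'} < \infty$ for some $r' > 8/3$, while Assumption A4 together with ergodic convergence yields a uniform $L^8$ bound for $\widehat F_n$ (or rather for its inverse, hence for $\widehat F_n$ itself locally). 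Hölder then gives $\sup_n \|n\,I_2(m)\|_{r'/2} < \infty$ with $r'/2 > 4/3 > 1$, and Markov's inequality concludes that $\P(|A_n|\geq c/2) = O(n^{-r'/2}) = o(1/n)$. Summing the three $o(1/n)$ contributions across the finitely many misspecified models delivers \eqref{theo0pen}.

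The main obstacle is the $A_n$ step: one must upgrade the merely distributional statement $n\,I_2(m) \stackrel{\mathcal D}{\longrightarrow} U^*(m)$ of Lemma \ref{prop1} into a \emph{uniform} $L^p$ bound of order strictly greater than $1$. This requires pairing the $L^{r'}$-tightness of $\sqrt n(\widehat{\theta}_m - \theta^*_m)$ from Proposition \ref{2+} with a uniform integrability argument for the empirical Hessian near $\theta^*_m$, and carefully estimating the cubic Taylor remainder (which can be controlled either by differentiating the regularity bounds in A5 once further, or by restricting attention to a compact neighborhood of $\theta^*_m$ via strong consistency of the QMLE). The $B_n$ step is more standard but still demands a sharp deviation inequality for weakly dependent stationary sums, beyond what a straightforward Chebyshev bound would deliver.
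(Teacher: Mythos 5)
Your argument is essentially the paper's own proof: the paper also reduces to bounding $n\,\P(\widehat C_{\textnormal{pen}}(m)\leq \widehat C_{\textnormal{pen}}(m^*))$ for each misspecified $m$, splits the same quantity into the two ERM remainders (your $A_n$, its $Z_1,Z_4$), the two centered empirical sums (your $B_n$, its $Z_2,Z_3$), the Kullback gap $R(\theta^*_m)-R(\theta^*)>0$, and the penalty term handled via \eqref{condpen} and $\textnormal{pen}(m)\geq 0$, with exactly the inputs you name (Proposition \ref{2+}, a Rosenthal-type inequality for $\tau$-weakly dependent sums, Markov). Two small quantitative slips: your H\"older step cannot deliver $\sup_n\|n\,I_2(m)\|_{r'/2}<\infty$ from $\|\sqrt n(\widehat\theta_m-\theta^*_m)\|_{r'}<\infty$ alone, since the exponent $r'/2$ would force an $L^\infty$ bound on the empirical Hessian; the paper instead settles for the order $3r'/8$, which still exceeds $1$ because $r'>8/3$, so the conclusion is unaffected. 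Likewise, Assumption {\bf A4} controls the \emph{inverse} Hessian and does not by itself bound the Hessian; the needed moment bound $\sup_n \E\big[\|\partial^2_{\theta^2}\widehat\gamma_n\|^4_\Theta\big]<\infty$ comes from {\bf A0} and {\bf A5} (Lemma 4 of \cite{barW}), which is how the paper closes that step.
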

\noindent Theorem \ref{theo:0} says that if the penalty asymptotically decreases to $0$ in probability, then the criterion $\widehat{C}_{\mbox{pen}}$ does not select a misspecified model asymptotically. \\
Now, we state the main results of this paper, which specify the convergence rate of $\mbox{pen}$ to obtain an excess loss close to the minimal one over $\mathcal{M}$:
\begin{theo}\label{theo:1}
Under Assumptions {\bf A0}-{\bf A5}, and if  for any $\varepsilon>0$ there exists $K_\varepsilon>0$ such as 
\begin{equation}\label{eq:penal}
\limsup _{ n \to \infty} \max_{m \in {\cal M}} \, \P \Big ( n \, \textnormal{pen}(m) \geq K_\varepsilon \Big) \leq \varepsilon.
\end{equation}
Then for any $\varepsilon >0$, there exists $M_\varepsilon>0$ and $N_\varepsilon\in \N^*$ such as for any $n\geq N_\varepsilon$,
\begin{equation}\label{bornepen}
\P \Big ( 
\ell(\widehat{\theta}_{\widehat{m}_{\textnormal{pen}}},\theta^* )\le \underset{m \in \mathcal{M}}{\min} \big\{\ell(\widehat{\theta}_{m},\theta^*)\big\} +\frac {M_\varepsilon}{n} \Big ) \geq 1-\varepsilon.
\end{equation}
\end{theo}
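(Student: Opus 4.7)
The strategy is a two-stage oracle-inequality argument. First, I reduce to the sub-family $\overline{\mathcal{M}} := \{m \in \mathcal{M} : m^* \subset m\}$ of overfitted models by showing $\P(m^* \not\subset \widehat{m}_{\textnormal{pen}}) \to 0$. Second, on the high-probability event where $\widehat{m}_{\textnormal{pen}}$ is overfitted, I exploit the fact that the leading stochastic fluctuation inside $\textnormal{pen}_{id}$ is model-free on $\overline{\mathcal{M}}$, so it cancels in pairwise comparisons of penalized criteria and leaves only $O_P(1/n)$ residuals.

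\emph{Reduction step.} Condition \eqref{eq:penal} implies that $n\,\textnormal{pen}(m)$ is uniformly tight, and in particular $\textnormal{pen}(m) \to 0$ in probability for each $m \in \mathcal{M}$. For any misspecified $m$ one has $\theta^* \notin \Theta_m$, and since $\theta^*$ is the strict global minimizer of $R$ on $\Theta$ (by \eqref{theta*} under Assumption {\bf A1}), $R(\theta^*_m) > R(\theta^*)$. The strong consistency of the QMLE in both the well-specified and misspecified regimes (a byproduct of Theorem~\ref{theomiss}) yields $\widehat\gamma_n(\widehat\theta_m) \to R(\theta^*_m) > R(\theta^*) = \lim_n \widehat\gamma_n(\widehat\theta_{m^*})$ almost surely. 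Since the penalties vanish, the strict ordering is preserved, so $\P(\widehat{m}_{\textnormal{pen}} = m) \to 0$; a union bound over the finite set of misspecified models completes the reduction.

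\emph{Oracle inequality on $\overline{\mathcal{M}}$.} Let $\widehat{m}_{id} \in \argmin_{m \in \mathcal{M}} \ell(\widehat\theta_m, \theta^*)$. For misspecified $m$, $\ell(\widehat\theta_m, \theta^*) \to R(\theta^*_m) - R(\theta^*) > 0$ a.s., whereas Lemma~\ref{prop1} gives $\ell(\widehat\theta_m, \theta^*) = O_P(1/n)$ for $m \in \overline{\mathcal{M}}$; consequently $\P(\widehat{m}_{id} \in \overline{\mathcal{M}}) \to 1$. Restrict to the event $\Omega_n$ defined as the intersection of this event with $\{m^* \subset \widehat{m}_{\textnormal{pen}}\}$, which has probability $\geq 1 - \varepsilon/2$ for large $n$. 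Combining $\widehat{C}_{\textnormal{pen}}(\widehat{m}_{\textnormal{pen}}) \leq \widehat{C}_{\textnormal{pen}}(\widehat{m}_{id})$ with the identity $\widehat\gamma_n(\widehat\theta_m) = R(\widehat\theta_m) - \textnormal{pen}_{id}(m)$ and subtracting $R(\theta^*)$ yields
\begin{equation*}
\ell(\widehat\theta_{\widehat{m}_{\textnormal{pen}}}, \theta^*) \leq \ell(\widehat\theta_{\widehat{m}_{id}}, \theta^*) + \bigl[\textnormal{pen}_{id}(\widehat{m}_{\textnormal{pen}}) - \textnormal{pen}_{id}(\widehat{m}_{id})\bigr] + \bigl[\textnormal{pen}(\widehat{m}_{id}) - \textnormal{pen}(\widehat{m}_{\textnormal{pen}})\bigr].
\end{equation*}
For $m \in \overline{\mathcal{M}}$ we have $\theta^*_m = \theta^*$, so in the decomposition \eqref{eq:decom} the term $I_3(m) = R(\theta^*) - \widehat\gamma_n(\theta^*)$ does not depend on $m$ and cancels in the first bracket, reducing it to $[I_1(\widehat{m}_{\textnormal{pen}}) + I_2(\widehat{m}_{\textnormal{pen}})] - [I_1(\widehat{m}_{id}) + I_2(\widehat{m}_{id})]$. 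By Lemma~\ref{prop1}, $n(I_1(m) + I_2(m))$ converges in distribution, hence is tight, for each $m$; finiteness of $\overline{\mathcal{M}}$ upgrades this to uniform tightness, so the first bracket is $O_P(1/n)$. The second bracket is $O_P(1/n)$ directly from \eqref{eq:penal}. Choosing $M_\varepsilon$ large enough that each correction is at most $M_\varepsilon/(2n)$ with probability at least $1-\varepsilon/4$ gives \eqref{bornepen}.

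\emph{Main obstacle.} The decisive technical point is the cancellation of $I_3$ on $\overline{\mathcal{M}}$: individually $I_3(m)$ is only $O_P(n^{-1/2})$ (a CLT-scale fluctuation of the empirical risk at $\theta^*$), but its model-independence within the overfitted class means it disappears from every pairwise comparison of penalized criteria and leaves the much sharper $O_P(n^{-1})$ residuals coming from $I_1+I_2$. Without this cancellation the argument would yield only the slower rate $n^{-1/2}$, which would miss the $C/n$ target in \eqref{bornepen}; verifying the step also relies on knowing that the limit distribution in Lemma~\ref{prop1} is genuine (not vacuous), which in turn uses the uniform moment control of Proposition~\ref{2+}.
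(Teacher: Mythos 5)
Your argument is correct and is built on the very same oracle inequality as the paper: your display is exactly Lemma~\ref{lem:deb}, obtained from $\widehat C_{\textnormal{pen}}(\widehat m_{\textnormal{pen}})\le \widehat C_{\textnormal{pen}}(\widehat m_{id})$ together with $\widehat C_{\textnormal{pen}_{id}}(m)=\ell(\widehat\theta_m,\theta^*)+R(\theta^*)$. Where you genuinely diverge is in how the two correction brackets are controlled, and your route is in fact the more careful one. The paper's proof asserts that $n\,\textnormal{pen}_{id}(m)=n\big(I_1(m)+I_2(m)+I_3(m)\big)$ is stochastically bounded for each fixed $m$, in particular that $n\,|I_3(m)|$ is dominated by a random variable with bounded expectation; but the proof of Proposition~\ref{prop} only controls the \emph{expectation} $\E[n\,I_3(m)]$, whereas $n\,I_3(m)=n\big(R(\theta^*_m)-\widehat\gamma_n(\theta^*_m)\big)$ fluctuates at scale $\sqrt n$ by the central limit theorem, so that domination cannot hold as stated. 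Your fix — first showing that with probability at least $1-\varepsilon/2$ both $\widehat m_{\textnormal{pen}}$ and $\widehat m_{id}$ contain $m^*$, and then using that on this class $I_3(m)=R(\theta^*)-\widehat\gamma_n(\theta^*)$ is model-free and cancels exactly in the pairwise comparison, leaving only the genuinely $O_P(1/n)$ quantities $I_1+I_2$ (tight by Lemma~\ref{prop1}, uniformly over the finite family) plus the $O_P(1/n)$ penalty difference from \eqref{eq:penal} — is precisely the cancellation needed to reach the $1/n$ rate rather than $n^{-1/2}$, at the price of an extra reduction step (which is a weaker form of Theorem~\ref{theo:0} and follows, as you say, from the a.s.\ consistency of the QMLE and $\textnormal{pen}(m)\to 0$ in probability). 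The only loose end in your write-up is the claim that $\theta^*\notin\Theta_m$ for every misspecified $m$: what is actually needed, and what the paper invokes from its earlier work, is $R(\theta^*_m)-R(\theta^*)=2\,DK_L(\theta^*\|\theta^*_m)>0$ whenever $m^*\not\subset m$; phrased that way your reduction step is airtight.
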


\begin{remark}
\noindent Let notice that this asymptotic optimality is quite a bit different from the classical one about {\it asymptotic efficiency}, where both the cardinal of the collection $\mathcal{M}$ and the dimension of competitive models are allowed to depend on $n$. However, this is done in the framework where the parameter $\theta^*$ is infinite-dimensional (see for example \cite{Shibata}, \cite{li1987}, \cite{hsu2019model}).
\end{remark}
\noindent Now, we provide a condition on the penalty allowing to obtain a consistent criterion. Moreover, we also prove that such criterion satisfies a sharper efficiency inequality than \eqref{bornepen}:
\begin{theo}\label{Theo7}
Under Assumptions {\bf A0}-{\bf A5}, if the penalty $\mbox{pen}$ satisfies \eqref{condpen}, and if for any  $m \in {\cal M}$ such as $m^*\subset m$ and $m\neq m^*$,
\begin{equation}\label{Cond7}
n \, \E[e_n(m)] \limiten \infty\qquad \mbox{and}\qquad n \,  \E\big [\big |e_n(m)-\E[e_n(m)]\big | \big ]\limiten 0
\end{equation}
with $e_n(m)=\mbox{pen}(m)-\mbox{pen}(m^*)>0$ since $m^*\subset m$ and $m\neq m^*$. Then we have, 
\begin{equation}\label{consist}
\P \big (\widehat m _{\textnormal{pen}}=m^* \big )\limiten 1.
\end{equation}
Moreover, for any $\varepsilon>0$ and $\eta>0$, there exists $N_{\varepsilon, \eta} \in \N^*$ such as for any $n \geq N_{\varepsilon, \eta}$,
\begin{equation}\label{bornewidepen}
\P \Big (  
\ell(\widehat{\theta}_{\widehat{m}_{\textnormal{pen}}},\theta^* ) \le   \ell(\widehat{\theta}_{m^*},\theta^*) +\frac {\eta}{n} \Big) \geq 1 -\varepsilon,
\end{equation}
and there exists $N_\eta \in \N$ such as for any $n\geq N_\eta$,
\begin{equation}\label{borneopti}
\E \big [ 
\ell(\widehat{\theta}_{\widehat{m}_{\textnormal{pen}}},\theta^* )\big ] \le \min_{m\in {\cal M}} \E \big [  \ell(\widehat{\theta}_{m},\theta^*) \big ] +\frac {\eta}{n}.
\end{equation}
\end{theo}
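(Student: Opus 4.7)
\textbf{Proof plan for Theorem \ref{Theo7}.}

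\emph{Consistency \eqref{consist}.} My plan is to split the event $\{\widehat m_{\textnormal{pen}}\neq m^*\}$ into the misspecified case $\{m^*\not\subset \widehat m_{\textnormal{pen}}\}$ and the strict-overfitting case $\{m^*\subsetneq \widehat m_{\textnormal{pen}}\}$. The first is handled immediately by Theorem~\ref{theo:0}, which applies because hypothesis \eqref{condpen} is assumed; it even gives $n\P(m^*\not\subset\widehat m_{\textnormal{pen}})\to 0$, which I will reuse for \eqref{borneopti}. For the second, I would fix $m\supsetneq m^*$ in $\mathcal{M}$ and compare the criteria
\[
n\bigl[\widehat C_{\textnormal{pen}}(m)-\widehat C_{\textnormal{pen}}(m^*)\bigr]
= n\bigl[\widehat\gamma_n(\widehat\theta_m)-\widehat\gamma_n(\widehat\theta_{m^*})\bigr]+n\,e_n(m).
\]
Writing the bracket as $-n I_2(m)+n I_2(m^*)$ with $\theta_m^*=\theta^*$ (overfitting), Lemma~\ref{prop1} yields $n I_2(m),\,n I_2(m^*)=O_\P(1)$, so the bracket is $O_\P(1)$. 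The second part of \eqref{Cond7} combined with Markov's inequality gives $n(e_n(m)-\E[e_n(m)])\to 0$ in probability, while the first part gives $n\,\E[e_n(m)]\to\infty$; hence $n\,e_n(m)\to\infty$ in probability. So $\widehat C_{\textnormal{pen}}(m)>\widehat C_{\textnormal{pen}}(m^*)$ with probability tending to one; finiteness of $\mathcal{M}$ finishes the argument.

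\emph{In-probability efficiency \eqref{bornewidepen}.} This is essentially free from consistency: given $\varepsilon>0$, choose $N_{\varepsilon,\eta}$ so that $\P(\widehat m_{\textnormal{pen}}=m^*)\ge 1-\varepsilon$ for $n\ge N_{\varepsilon,\eta}$. On this event, $\widehat\theta_{\widehat m_{\textnormal{pen}}}=\widehat\theta_{m^*}$, so $\ell(\widehat\theta_{\widehat m_{\textnormal{pen}}},\theta^*)=\ell(\widehat\theta_{m^*},\theta^*)$, which trivially is $\le \ell(\widehat\theta_{m^*},\theta^*)+\eta/n$ for any $\eta>0$.

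\emph{Expected efficiency \eqref{borneopti}.} By Proposition~\ref{prop0}, $\min_{m\in\mathcal M}\E[\ell(\widehat\theta_m,\theta^*)]=\E[\ell(\widehat\theta_{m^*},\theta^*)]$ for $n$ large. Decompose
\[
\E\bigl[\ell(\widehat\theta_{\widehat m_{\textnormal{pen}}},\theta^*)\bigr]
=\E\bigl[\ell(\widehat\theta_{m^*},\theta^*)\1_{\widehat m_{\textnormal{pen}}=m^*}\bigr]
+\sum_{m\neq m^*}\E\bigl[\ell(\widehat\theta_m,\theta^*)\1_{\widehat m_{\textnormal{pen}}=m}\bigr],
\]
and bound the first summand above by $\E[\ell(\widehat\theta_{m^*},\theta^*)]$. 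I then have to show each remaining term is $o(1/n)$. For misspecified $m$, since $\Theta$ is compact and $R$ is continuous, $\ell(\widehat\theta_m,\theta^*)$ is uniformly bounded, and Theorem~\ref{theo:0} supplies $n\,\P(\widehat m_{\textnormal{pen}}=m)=o(1)$. For strictly overfitted $m\supsetneq m^*$, $\ell(\widehat\theta_m,\theta^*)=I_1(m)$; a Taylor expansion of $R$ at $\theta^*$ (with $\nabla R(\theta^*)=0$) gives $n I_1(m)\le C\|\sqrt n(\widehat\theta_m-\theta^*)\|^2$, so Proposition~\ref{2+} bounds $\E[(nI_1(m))^{r'/2}]$ uniformly in $n$ with $r'/2>4/3>1$. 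This uniform integrability together with $\P(\widehat m_{\textnormal{pen}}=m)\to 0$ (from consistency) yields $n\,\E[I_1(m)\1_{\widehat m_{\textnormal{pen}}=m}]\to 0$, as desired.

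\emph{Anticipated obstacle.} The delicate point is the last item: controlling $\E[\ell(\widehat\theta_m,\theta^*)\1_{\widehat m_{\textnormal{pen}}=m}]$ for overfitted $m$, which is not handled by in-probability convergence alone. I expect that the uniform moment bound of Proposition~\ref{2+} (guaranteed by the hypothesis $r>8$ through the admissible range of $r'$) is exactly what is needed to promote the consistency to an $o(1/n)$ bound in expectation; the verification that the Taylor remainder in $R(\widehat\theta_m)-R(\theta^*)$ can be absorbed into the quadratic term requires the $\mathcal{C}^2$-regularity from Assumption~\textbf{A0} and the boundedness of second derivatives over the compact $\Theta$, which are available.
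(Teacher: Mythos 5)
Your proof is correct, and it takes a genuinely different route from the paper for the two efficiency statements. For consistency \eqref{consist} you and the paper do essentially the same thing: compare $\widehat C_{\textnormal{pen}}(m)$ with $\widehat C_{\textnormal{pen}}(m^*)$ for overfitted $m$, note that the contrast difference is $-nI_2(m)+nI_2(m^*)=O_\P(1)$ while $n\,e_n(m)\to\infty$ in probability, and invoke Theorem \ref{theo:0} for misspecified models; the paper merely makes this quantitative by splitting the event at the threshold $\tfrac n3\E[e_n(m)]$ and using the moment bounds behind Proposition \ref{2+} to obtain the explicit rate \eqref{MajPm}. Where you diverge is afterwards. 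The paper routes \eqref{bornewidepen} and \eqref{borneopti} through the oracle inequality of Lemma \ref{lem:deb} specialized to $m^*$, and then proves $n\,\E[|\textnormal{pen}(m^*)-\textnormal{pen}(\widehat m_{\textnormal{pen}})|]\to 0$ and $n\,\E[|\textnormal{pen}_{id}(m^*)-\textnormal{pen}_{id}(\widehat m_{\textnormal{pen}})|]\to 0$, concluding by Markov for the in-probability bound and by taking expectations for \eqref{borneopti}. You instead get \eqref{bornewidepen} for free from consistency (on $\{\widehat m_{\textnormal{pen}}=m^*\}$ the two losses coincide), and for \eqref{borneopti} you decompose $\E[\ell(\widehat\theta_{\widehat m_{\textnormal{pen}}},\theta^*)]$ over the selection events, handling misspecified $m$ via boundedness of $R$ on the compact $\Theta$ together with $n\,\P(m^*\not\subset\widehat m_{\textnormal{pen}})\to 0$ from Theorem \ref{theo:0}, and overfitted $m$ via H\"older with exponent $r'/2>4/3>1$ applied to $n I_1(m)\le \tfrac12\sup_\Theta\|\partial^2_{\theta^2}R\|\,\|\sqrt n(\widehat\theta_m-\theta^*)\|^2$ and the uniform $L^{r'}$ bound of Proposition \ref{2+}, so that $n\,\E\big[I_1(m)\1_{\widehat m_{\textnormal{pen}}=m}\big]\to 0$. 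This uniform-integrability upgrade is exactly the right tool at the anticipated obstacle you flag, and it is available under the stated assumptions. Your route is shorter and avoids controlling the random penalty differences in expectation (it only uses \eqref{condpen} and \eqref{Cond7} through the consistency step); the paper's route yields as a by-product explicit convergence rates for the probability of selecting each overfitted model, which your softer $O_\P(1)$ argument does not provide but which are not needed for the statements of the theorem. I see no gap.
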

\noindent The best known criterion satisfying the conditions of this theorem and in particular \eqref{Cond7} is certainly the BIC criterion for which $\textnormal{pen}(m)=\frac {\log n}{n} \,|m|$ and therefore $e_n(m)=\frac {\log n}{n} \,\big (|m|-|m^*| \big)$. This is also such a case for Hannan-Quinn criterion ($\textnormal{pen}(m)=\frac {\log (\log n)}{n} \,|m|$, see \cite{HannanQuinn}) or if $\textnormal{pen}(m)=\frac {\sqrt n}{n} \,|m|$ as we used it in \cite{bar2019}. Note also that both the consistent data-driven criteria mentioned in the next section (see \eqref{def_KC} defined in \cite{Kashyap1982},	and \eqref{NewS}) also verify the conditions of Theorem \ref{Theo7}. \\
On the contrary, the AIC criterion with $\textnormal{pen}(m)=\frac 2 n \, |m|$, or the criterion with penalty $\widetilde {\textnormal{pen}}(m)$ do not satisfy these conditions. The following theorem even shows that these criteria asymptotically overfit and have a less good asymptotic efficiency than consistent criteria satisfying Theorem \ref{Theo7}:
\begin{theo}\label{Theo8}
Assume that there exists $g:{\cal M}\to [0,\infty[$ such as $\textnormal{pen}(m)=g(m)/n$ for any $m\in {\cal M}$. Then, under Assumptions {\bf A0}-{\bf A5}, the probability of overfitting is asymptotically positive {\it i.e.} 
\begin{equation}\label{overfitting}
\liminf_{ n \to \infty}  \, \, \P\big (\widehat m_{\textnormal{pen}} ~ \textnormal{overfits}\big )> 0.
\end{equation} 
and there exists $M>0$ such as for $n$ large enough,
\begin{equation}\label{bornewidepen2}
\E \big [  
\ell(\widehat{\theta}_{\widehat{m}_{{\textnormal{pen}}}},\theta^* )\big] \geq \underset{m \in \mathcal{M}}{\min} \, \E \big [   \ell(\widehat{\theta}_{m},\theta^*)  \big ]+ \frac {M}{n} .
\end{equation}
\end{theo}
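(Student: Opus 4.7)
My plan is to pick an overfitting candidate $m_0 \in \mathcal{M}$ with $m^* \subsetneq m_0$ and use it to show both that overfitting occurs with positive limiting probability and that the corresponding contribution to the expected excess loss is of exact order $1/n$. If no such $m_0$ exists, \eqref{overfitting} is vacuous and \eqref{bornewidepen2} follows directly from Proposition \ref{prop0}, so assume one does. Since the penalty $g(m)/n$ is deterministic and $o(1)$, the smallness condition \eqref{condpen} of Theorem \ref{theo:0} is trivially satisfied, so $\P(m^*\not\subset \widehat m_{\textnormal{pen}}) = o(1/n)$ throughout.

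For \eqref{overfitting}, using $\theta^*_{m_0}=\theta^*_{m^*}=\theta^*$ I rewrite
\[
\bigl\{\widehat C_{\textnormal{pen}}(m_0)<\widehat C_{\textnormal{pen}}(m^*)\bigr\}=\bigl\{n[\widehat\gamma_n(\widehat\theta_{m^*})-\widehat\gamma_n(\widehat\theta_{m_0})]>g(m_0)-g(m^*)\bigr\}.
\]
The bracketed quantity equals $nI_2(m_0)-nI_2(m^*)$, which by the Taylor-expansion identity underlying Lemma \ref{prop1} (representing $nI_2(m)$ as $\tfrac 1 2 [\sqrt n(\widehat\theta_m-\theta^*)]^\top F_m[\sqrt n(\widehat\theta_m-\theta^*)]+o_P(1)$) converges jointly with $nI_1(m_0)-nI_1(m^*)$ to the same non-negative, non-degenerate limit $U^*(m_0)-U^*(m^*)$, a quadratic form in the $|m_0|-|m^*|\ge 1$ extra Gaussian coordinates of $\sqrt n(\widehat\theta_{m_0}-\theta^*)$. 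Its distribution admits a density on $(0,\infty)$, so $p:=\P(U^*(m_0)-U^*(m^*)>g(m_0)-g(m^*))>0$. Combined with $\P(m^*\not\subset\widehat m_{\textnormal{pen}})=o(1)$, this gives $\liminf_n \P(\widehat m_{\textnormal{pen}}\text{ overfits})\ge p>0$.

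For \eqref{bornewidepen2}, Proposition \ref{prop0} identifies $m^*$ as the expected-loss minimizer for $n\ge N_0$. Decomposing on $\{\widehat m_{\textnormal{pen}}=m\}$ and using $\ell(\widehat\theta_m,\theta^*)=I_1(m)$ whenever $m\supset m^*$, I get
\[
\E[\ell(\widehat\theta_{\widehat m_{\textnormal{pen}}},\theta^*)]-\E[\ell(\widehat\theta_{m^*},\theta^*)]=\sum_{m\supsetneq m^*}\E\bigl[(I_1(m)-I_1(m^*))\,\1_{\widehat m_{\textnormal{pen}}=m}\bigr]+\mathrm{R}_n,
\]
where $\mathrm{R}_n$ gathers all misspecified contributions; it is $o(1/n)$ because $\P(\widehat m_{\textnormal{pen}}$ misspecified$)=o(1/n)$ by Theorem \ref{theo:0}, those losses are bounded, and the subtracted $\E[I_1(m^*)\,\1_{\widehat m_{\textnormal{pen}}\ne m^*}]$ piece is $o(1/n)$ by the $L^{r'/2}$ uniform integrability of $nI_1(m^*)$ afforded by Proposition \ref{2+} (with $r'>2$). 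On $\{\widehat m_{\textnormal{pen}}=m\}$ with $m\supsetneq m^*$ the criterion comparison with $m^*$ gives $n(I_2(m)-I_2(m^*))\ge g(m)-g(m^*)\ge 0$, and the asymptotic equivalence with $nI_1(m)-nI_1(m^*)$ transfers this (in probability) to the main sum, making each summand asymptotically non-negative. Isolating $m=m_0$ and using Proposition \ref{2+} to promote joint weak convergence to convergence of expectations,
\[
\liminf_{n\to\infty} n\,\E\bigl[(I_1(m_0)-I_1(m^*))\,\1_{\widehat m_{\textnormal{pen}}=m_0}\bigr]\;\ge\;\E\bigl[(U^*(m_0)-U^*(m^*))\,\1_B\bigr]>0,
\]
with $B=\{U^*(m_0)-U^*(m^*)>g(m_0)-g(m^*)\}$. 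Any $M$ strictly less than this $\liminf$ then yields \eqref{bornewidepen2}.

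The main obstacle is precisely this last step: converting the joint distributional convergence together with the random selection event $\{\widehat m_{\textnormal{pen}}=m_0\}$ into a sharp expectation lower bound. The delicate ingredients are (i) the common quadratic-form representation of $nI_1(m)-nI_1(m^*)$ and $nI_2(m)-nI_2(m^*)$, which aligns the random selection event with the tractable limiting event $B$; (ii) the uniform integrability from Proposition \ref{2+}, needed to upgrade convergence in distribution to convergence of expectations; and (iii) the asymptotic non-negativity of every other summand $m\supsetneq m^*$, $m\ne m_0$ on its own selection event, ensuring these contributions cannot cancel the strictly positive $m_0$ term.
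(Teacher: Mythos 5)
Your treatment of \eqref{overfitting} is correct and follows essentially the paper's own route: compare $\widehat C_{\textnormal{pen}}(m_0)$ with $\widehat C_{\textnormal{pen}}(m^*)$, identify the limit of $n\big(I_2(m_0)-I_2(m^*)\big)$ as a quadratic form whose ``extra'' part is a weighted $\chi^2(|m_0|-|m^*|)$ with unbounded support, deduce a positive limiting probability for the pairwise comparison, and discard the misspecified models via Theorem \ref{theo:0}.

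The gap is in \eqref{bornewidepen2}, precisely at the step you flag as the main obstacle. The inequality
\begin{equation*}
\liminf_{n\to\infty} n\,\E\big[(I_1(m_0)-I_1(m^*))\,\1_{\widehat m_{\textnormal{pen}}=m_0}\big]\;\ge\;\E\big[(U^*(m_0)-U^*(m^*))\,\1_B\big]
\end{equation*}
goes the wrong way. The event $\{\widehat m_{\textnormal{pen}}=m_0\}$ requires $m_0$ to beat \emph{every} competitor, not only $m^*$, so it is (asymptotically) contained in the event whose limit is $B$; since the limiting integrand $U^*(m_0)-U^*(m^*)$ is non-negative (it is the limit of $n(I_2(m_0)-I_2(m^*))=n(\widehat\gamma_n(\widehat\theta_{m^*})-\widehat\gamma_n(\widehat\theta_{m_0}))\ge 0$), restricting to the smaller selection event can only decrease the expectation. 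Worse, for a fixed $m_0$ the selection probability can vanish entirely: if $m^*\subset m_0\subset m_1$ are nested and $g$ is constant (allowed by the hypothesis, e.g. $g\equiv 0$), then $\widehat\gamma_n(\widehat\theta_{m_1})\le\widehat\gamma_n(\widehat\theta_{m_0})$ pathwise, so $m_0$ is essentially never selected and its contribution is $0$, not bounded below by $\E[(U^*(m_0)-U^*(m^*))\1_B]>0$. Only the \emph{union} of overfitting selection events is guaranteed a positive limiting probability by \eqref{overfitting}. The fix is to keep the whole sum $\sum_{m\supsetneq m^*}\E\big[(U^*(m)-U^*(m^*))\,\1_{A_m}\big]$ with $A_m$ the limiting selection event of $m$: every term is non-negative, at least one $A_m$ has positive probability, and $U^*(m)-U^*(m^*)>0$ a.s.\ (its weighted-$\chi^2$ component has a density and is independent of the remaining part), so the sum is strictly positive. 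The paper reaches the same conclusion by a different aggregation, lower-bounding $\E[\ell(\widehat\theta_m,\theta^*)]-\E[\ell(\widehat\theta_{m^*},\theta^*)]$ by $K(m,m^*)/n$ for each overfitting $m$ via the trace comparison behind Proposition \ref{prop0} and then weighting by selection probabilities; in either formulation the positive constant $M$ must come from the aggregate overfitting probability, not from one arbitrary $m_0$.
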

\begin{corollary}\label{corAIC}
Theorem \ref{Theo8} is valid for $\textnormal{pen}(m)=\frac 2 n \, |m|$ (AIC criterion) or $\textnormal{pen}=\widetilde{\textnormal{pen}}$. 
\end{corollary}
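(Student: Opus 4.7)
The plan is to verify directly that both penalties satisfy the hypothesis of Theorem \ref{Theo8}, namely the existence of $g:\mathcal{M}\to[0,\infty[$ with $\textnormal{pen}(m)=g(m)/n$. Once this is done, the conclusions \eqref{overfitting} and \eqref{bornewidepen2} for the two penalties follow by a direct invocation of Theorem \ref{Theo8}, with no further work required.

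For the AIC case, the verification is immediate: writing $\textnormal{pen}(m)=2|m|/n$ gives $g(m)=2|m|$, which is clearly a non-negative function on $\mathcal{M}$ that does not depend on $n$. So the hypothesis of Theorem \ref{Theo8} holds trivially.

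For $\widetilde{\textnormal{pen}}$ defined in \eqref{widepen}, we set $g(m):=-2\,\mbox{Trace}\bigl((F(\theta^*_m))^{-1}G(\theta^*_m)\bigr)$, which does not depend on $n$. The only point requiring a short argument is the sign: $G(\theta^*_m)$ is positive semi-definite as a long-run covariance matrix of the score (cf.\ \eqref{matrixG} and Corollary \ref{corol}), and $F(\theta^*_m)$ is negative semi-definite because, by \eqref{matrixF}, it equals $-\tfrac12$ times the Hessian of $R$ at the interior minimizer $\theta^*_m$, and this Hessian is positive semi-definite. Hence $-F(\theta^*_m)^{-1}$ is positive semi-definite, and using the elementary identity $\mbox{Trace}(AB)=\mbox{Trace}(A^{1/2}BA^{1/2})\geq 0$ valid whenever $A,B$ are both positive semi-definite (with $A=-F(\theta^*_m)^{-1}$ and $B=G(\theta^*_m)$), we conclude $g(m)\geq 0$. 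Assumption \textbf{A2} further guarantees that $F(\theta^*_m)$ is invertible so the expression is well defined.

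With the hypothesis of Theorem \ref{Theo8} verified in both cases, the corollary is proved. There is no real obstacle: the content of the corollary is essentially a sign check for $\widetilde{\textnormal{pen}}$, relying on the fact that $G_m$ is a covariance matrix and $F_m$ is built from the Hessian of $R$ at a minimizer. Note that in the Gaussian white noise case one can even verify that $g(m)=2|m|$, so the two penalties coincide, which is consistent with the discussion following Proposition \ref{prop}.
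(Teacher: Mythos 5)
Your proposal is correct: the paper states Corollary \ref{corAIC} without proof, treating the verification of the hypothesis of Theorem \ref{Theo8} as immediate, and your check is exactly what is needed. Your sign argument for $\widetilde{\textnormal{pen}}$ (that $G(\theta^*_m)$ is positive semi-definite and $-F(\theta^*_m)$ is positive definite, so $-2\,\mbox{Trace}\big((F(\theta^*_m))^{-1}G(\theta^*_m)\big)\geq 0$) coincides with the fact already recorded in the paper's proof of Proposition \ref{prop0}, where all eigenvalues of $-\big(F(\theta^*)\big)^{-1}G(\theta^*)$ are noted to be positive.
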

\noindent To conclude, in this context where a true model belonging to a finite family of models exists, the consistent criteria are those which also propose the best asymptotic efficiency. The next section focuses on these. However, a criterion like the AIC criterion or more generally the criterion with penalty $\widetilde{\textnormal{pen}}$ regain all their optimality properties in asymptotic efficiency when the model family is infinite or when the true model does not belong to the family.  
\section{From a Bayesian model selection to a data-driven consistent model selection}\label{sec:bay}
Another classical paradigm for model selection is the Bayesian one, leading typically to the BIC criterion (see \cite{schwarz}). In this approach, the construction of the model selection criterion is first done by assuming that the parameter vector $\theta^*$ is a random vector. Let recall the hierarchical prior sampling scheme in the Bayesian setting: given the finite family of models $\mathcal{M}$, a model $m$ is drawn according to a prior distribution $(\pi_m)_{m \in \mathcal{M}}$ (generally a uniform distribution) and then, conditionally on $m$, $\theta$ is sampled according to some prior distribution $\mu_{m}(\theta)$.\\
~\\
The goal of this model selection procedure is to choose the most probable model after observing the trajectory $X:=(X_1,\cdots, X_n)$, {\it i.e.}
\begin{equation}\label{eq:bic}
\widehat m_B= \underset{m \in \mathcal{M}}{\argmax} \; \big \{\P\big(m\,|\,X\big )\big \}.
\end{equation}
Using Bayes Formula, we can write $\P\big(m\,|\,X\big )=\frac{\pi_m \, \P\big (X\,|\,m\big)}{\P(X)}.$ Moreover, we have:
$$
\P\big (X\, |\, m\big )=\int_{\Theta_m} \P\big (X\,|\, \theta,m\big ) \, d\mu_{m}(\theta).
$$
In addition, since $\P(X)$ does not depend on $m$, and $\P\big (X\,|\, \theta,m\big )$ is the  likelihood of $X$ given $\theta \in \Theta_m$ and $m\in {\cal M}$, maximizing $\P\big (m\,|\, X\big )$ is equivalent to maximize
\begin{equation*}\label{eq:bic2}
	\widehat S_n(m,X):=\log\big(\P\big (X\,|\,m\big ) \big)= \log \Big( \int_{\Theta_m} \pi_m \, \exp\big( L_n(\theta)\big) \,d\mu_{m}(\theta)  \Big).
\end{equation*}
%
From now on, we will assume that $\pi_m=1/|{\cal M}|$ for any $m\in {\cal M}$, a priori uniform distribution of the models in the family ${\cal M}$. We can also assume that there exists a non-negative Borel function $\theta \to b_m(\theta)$ such as $d\mu_{m}(\theta)=b_m(\theta) \, d \theta$. Then we have:
\begin{equation}\label{eq:bic3}
	\widehat S_n(m,X)= -\log (|{\cal M}|)+ \log \Big( \int_{\Theta_m} b_m(\theta)  \, \exp\big( L_n(\theta)\big) \,d\theta  \Big).
\end{equation}
Let us give an asymptotic expansion of the {\it a posteriori} probability in order to derive a BIC type criterion that is coherent with our framework where the observed trajectory is that of a causal affine process. This could be obtained from a Laplace approximation, leading to the following theorem:
\begin{theo}\label{theo:bic}
Under Assumptions {\bf A0, A1, A2, A3, A5} and if for any $x \in R^\infty$, the functions $\theta \to M_\theta$ and $\theta \to f_\theta$ are ${\cal C}^6(\Theta)$ functions satisfying {\bf A}$(\partial^k_{\theta^k}f_{\theta},\Theta)$ and  {\bf A}$(\partial^k_{\theta^k} M_{\theta},\Theta)$ for any $0\leq k \leq 6$. Then
	\begin{multline}\label{eq:bic0}
		\widehat{S}_n(m,X) =\widehat{L}_n(\widehat{\theta}_m)-\frac{\log(n)}{2}\, |m|+\log\big (b_m(\widehat \theta_m)\big ) \\
+\frac{\log(2\pi)} 2 \, |m|-\frac{1}{2}\, \log \big ( \det\big (-\widehat F_n(m)\big)\big) -\log (|{\cal M}|)+O(n^{-1})\quad a.s.
	\end{multline}	
where  $\widehat{F}_n(m) := \big (\partial^2_{\theta_i\theta_j} \widehat{\gamma}_n( \widehat{\theta}_m)\big )_{i,j \in m}$.
\end{theo}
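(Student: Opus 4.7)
The strategy is a careful Laplace approximation of the integral inside the logarithm in \eqref{eq:bic3}, keeping all terms of order $O(1)$ and collecting everything of smaller order into the $O(1/n)$ remainder. Replacing $L_n$ by the computable $\widehat L_n$ (whose difference with $L_n$ is uniformly absorbed into the error via the Lipschitz bounds in Assumption {\bf A5}), we can write $\widehat S_n(m,X) = -\log|\mathcal M| + \log\int_{\Theta_m} b_m(\theta)\exp(\widehat L_n(\theta))\,d\theta$ and focus on the integral. Since $\widehat\theta_m$ is the interior maximizer of $\widehat L_n$ on $\Theta_m$, the integrand concentrates around $\widehat\theta_m$, and my plan is to split the integral into a local part on a shrinking ball $B(\widehat\theta_m,\varepsilon_n)$ with $\varepsilon_n \to 0$ slowly (say $\varepsilon_n = n^{-1/3}$) and an exponentially negligible tail.

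For the tail, I would argue that uniform positive-definiteness of $\widehat F_n(m)$ near $\widehat\theta_m$, obtained from Assumption {\bf A4} and the ergodic theorem applied to $\partial^2_{\theta^2}\widehat\gamma_n$, combined with the strong consistency $\widehat\theta_m \to \theta_m^*$, yields $\widehat L_n(\widehat\theta_m) - \widehat L_n(\theta) \ge c\,n\,\varepsilon_n^2$ on $\Theta_m\setminus B(\widehat\theta_m,\varepsilon_n)$. With $n\varepsilon_n^2 \to\infty$ this produces a contribution that is exponentially smaller than the leading term and hence absorbed in $O(1/n)$. For the local part, I would Taylor-expand $\widehat L_n$ around $\widehat\theta_m$ up to order five, using the full $\mathcal C^6$-regularity and the Lipschitz conditions {\bf A}$(\partial^k_{\theta^k}f_\theta,\Theta)$, {\bf A}$(\partial^k_{\theta^k}M_\theta,\Theta)$, which together with Assumption {\bf A5} make all the empirical derivatives uniformly bounded in probability. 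The first-order term vanishes by the ERM equation $\partial_\theta\widehat L_n(\widehat\theta_m)=0$, leaving a Gaussian quadratic form with Hessian $\partial^2\widehat L_n(\widehat\theta_m) = -\tfrac{n}{2}\widehat F_n(m)$ plus polynomial remainders of order three and higher.

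The change of variables $u=\sqrt n\,(\theta-\widehat\theta_m)$ then turns the local integral into $n^{-|m|/2}\int b_m(\widehat\theta_m + u/\sqrt n)\,\exp\!\left(-\tfrac14 u^\top \widehat F_n(m) u + n^{-1/2}\widetilde T_3(u) + n^{-1}\widetilde T_4(u)+\cdots\right)du$, where the $\widetilde T_k$ are polynomials of degree $k$ whose coefficients are $O_{\mathbb P}(1)$ thanks to Proposition \ref{2+} and the ergodic control of higher-order derivatives. Expanding the exponential as $1 + n^{-1/2}\widetilde T_3 + n^{-1}(\widetilde T_4 + \tfrac12\widetilde T_3^2) + O(n^{-3/2})$ and $b_m(\widehat\theta_m+u/\sqrt n) = b_m(\widehat\theta_m)+O(n^{-1/2})$, integration against the Gaussian weight kills the odd moments, so the $n^{-1/2}$ pieces produce at most $O(n^{-1})$ contributions to the integral and the $n^{-1}$ pieces give an $O(n^{-1})$ multiplicative correction. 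The leading Gaussian integral evaluates to $b_m(\widehat\theta_m)(2\pi)^{|m|/2}\det\!\bigl(\tfrac{n}{2}\widehat F_n(m)\bigr)^{-1/2}$, and taking logarithms yields the claim after regrouping $-\tfrac12\log\det\!\bigl(\tfrac{n}{2}\widehat F_n(m)\bigr) = -\tfrac{|m|}{2}\log n - \tfrac12\log\det\!\bigl(-\widehat F_n(m)\bigr)$ according to the sign convention adopted in the statement.

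The main obstacle, in my view, is not the Laplace identity itself but the quantitative control of the remainder at the $O(1/n)$ level: one must simultaneously (i) make the truncation radius $\varepsilon_n$ small enough for the quadratic approximation to dominate yet large enough for the tail to be exponentially negligible, (ii) show that the third- and fourth-order Taylor coefficients stay bounded in probability with uniform moments, which is exactly where the $\mathcal C^6$-smoothness and Proposition \ref{2+} enter, and (iii) verify that the cross-terms coming from expanding both $b_m$ and the exponential contribute only $O(n^{-1})$ after integration. Once these three technical points are in place, the collection of terms is a routine algebraic manipulation.
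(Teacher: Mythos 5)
Your proposal is correct in outline, but it takes a genuinely different route from the paper. The paper does not carry out the Laplace approximation by hand: it verifies conditions (C1) and (C2) of \cite{Chen}, which are sufficient for conditions (i)--(iii) of \cite{ktk}, and then invokes Theorem 1 of \cite{ktk} with $h_n=-\frac1n\widehat L_n$ to obtain, in one stroke and almost surely, the expansion $\int_{\Theta_m}b_m(\theta)\exp(\widehat L_n(\theta))\,d\theta=\exp(\widehat L_n(\widehat\theta_m))\,(2\pi)^{|m|/2}\det\big(-\partial^2_{\theta^2}\widehat L_n(\widehat\theta_m)\big)^{-1/2}\big(b_m(\widehat\theta_m)+O(n^{-1})\big)$; the verification reduces to the a.s.\ convergence of $\frac1n\partial^2_{\theta^2}\widehat L_n(\widehat\theta_m)$ to the negative definite matrix $F_m(\theta^*_m)$ plus continuity of the higher-order derivatives. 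Your argument is essentially a self-contained re-proof of that cited theorem (local/tail splitting, high-order Taylor expansion, rescaling by $\sqrt n$, vanishing of odd Gaussian moments). What your route buys is transparency about exactly where the ${\cal C}^6$ regularity and the conditions {\bf A}$(\partial^k_{\theta^k}f_{\theta},\Theta)$, {\bf A}$(\partial^k_{\theta^k} M_{\theta},\Theta)$ enter, namely the control of the rescaled Taylor coefficients $\widetilde T_3,\dots,\widetilde T_6$ (for which the relevant tool is the ergodic theorem applied to $\frac1n\partial^k_{\theta^k}\widehat L_n$, rather than Proposition \ref{2+}, which concerns $\sqrt n(\widehat\theta_m-\theta^*_m)$). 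What the paper's route buys is brevity and a remainder already certified to be $O(n^{-1})$ almost surely, whereas your $O_{\P}(1)$ bounds on the coefficients only deliver an in-probability remainder unless you upgrade them to a.s.\ statements.

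Two soft spots deserve mention, though neither is worse than what the paper itself does. First, your tail bound $\widehat L_n(\widehat\theta_m)-\widehat L_n(\theta)\ge c\,n\,\varepsilon_n^2$ on $\Theta_m\setminus B(\widehat\theta_m,\varepsilon_n)$ cannot come from local positive definiteness of the Hessian alone: for $\theta$ bounded away from $\widehat\theta_m$ you need the global separation $\inf_{\|\theta-\theta^*_m\|\ge\epsilon}R(\theta)>R(\theta^*_m)$ (identifiability, Assumption {\bf A1}) combined with the uniform a.s.\ convergence of $\widehat\gamma_n$ to $R$; the quadratic lower bound is only valid on a fixed small neighbourhood of $\widehat\theta_m$. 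Second, the substitution of $\widehat L_n$ for $L_n$ inside the integral is not innocuous at the claimed precision: the bound \eqref{borneC} shows that $\sup_{\Theta}|L_n-\widehat L_n|$ is $O_{\P}(1)$, not $o_{\P}(1)$, so the ratio of the two integrals is $e^{O_{\P}(1)}$ and cannot be ``absorbed into the $O(n^{-1})$ error''; the paper makes the same silent substitution, and the same remark applies to the factor-of-two and sign bookkeeping in $\det(-\widehat F_n(m))$ versus $\det\big(\tfrac n2\widehat F_n(m)\big)$, which you flag as a ``sign convention'' and which is in fact an inconsistency inherited from the statement's definition of $\widehat F_n(m)$ via $\widehat\gamma_n$ rather than $\frac1n\widehat L_n$.
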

\noindent In the above equation, it is clear that  $-2 \, \widehat S_n(m,X) \simeq -2 \,\widehat{L}_n(\widehat{\theta}_m)+\log(n)\, |m|$ a.s.. 
This gives legitimacy to the usual BIC criterion within the framework of causal affine processes since:
$$
\widehat m_{BIC}=\argmin_{m\in {\cal M}} \Big \{ -2 \,\widehat{L}_n(\widehat{\theta}_m)+\log(n)\, |m|\Big \},
$$
and we see that $\widehat m_{BIC}$ maximizes the main terms of $\widehat S_n(m,X) $.  \\
From the relation \eqref{eq:bic0}, considering certain second order terms of the asymptotic expansion of $\widehat{S}_n(m,X)$, we also obtain the Kashyap criterion (see Kashyap \cite{Kashyap1982}, Sclove \cite{Sclove1987}, Bozdogan \cite{Bozdogan1987}), denoted $KC$ criterion, defined for all $ m \in \cal M$ by 
%
\begin{multline} \label{def_KC}
	\widehat{KC}(m):= -2\,\widehat{L}_n(\widehat{\theta}_m)+\log(n)\, |m|+  \log \big ( \det\big (-\widehat F_n(m)\big)\big)  \\
	\quad\mbox{and}\quad \widehat m_{KC}=\argmin_{m\in {\cal M}} \big \{ \widehat{KC}(m)\big \}.
\end{multline}
Therefore the term  $\displaystyle \log \big ( \det\big (-\widehat F_n(m)\big)\big) $ is added to the usual BIC criterion. Several examples of computations of this term, generally equal to $c\, |m|$ but not always, are provided in the forthcoming Section \ref{Exam}. It is clear that $\widehat m_{KC}$ can be more interesting that $\widehat m_{BIC}$ in terms of consistency only for non asymptotic framework (typically for $n$ of the order of a hundred or several hundred). 
Note also that the data-driven criteria $KC$ that is "optimal" in the sense of the  a posteriori probability (see Kashyap \cite{Kashyap1982}) is also asymptotically consistent under the Assumption {\bf A5}. \\
However, this choice of second order terms of the asymptotic expansion of $\widehat{S}_n(m,X)$ is somewhere arbitrary. A criterion taking account of all the second order terms could also be defined. For this, we could define a uniform distribution $b_m$ on a compact set included in $\Theta_m$. As a consequence, using condition \eqref{Theta} of Assumption {\bf A0}, there always exists $0<C_1\leq C_2$ such as $\frac {C_1}{|m|} \leq b_m(\Theta_m) \leq  \frac {C_2}{|m|}$. As a consequence, we could define a new  data-driven consistent criterion, called $KC'$, such as for any $m\in {\cal M}$
\begin{multline}\label{NewS}
\hspace{-0.4cm}\widehat{KC'}(m):= -2\,\widehat{L}_n(\widehat{\theta}_m)+\big (\log(n)-\log(2\pi) \big )\, |m|+\log \big ( \det\big (-\widehat F_n(m)\big)\big)  +2 \log \big(|m| \big) \\
\qquad\qquad\qquad\mbox{and}\quad \widehat m_{KC'}=\argmin_{m\in {\cal M}} \big \{ \widehat{KC}'(m)\big \}.
\end{multline}
\begin{remark}
We also know that under Assumptions {\bf A0-A5}, $\widehat F_n(m) \limiteasn F(\theta^*_m)$ where $F$ is defined in \eqref{matrixF}. Therefore the term $\log \big ( \det\big (-\widehat F_n(m)\big)\big) $ can also be replaced by $\log \big ( \det\big (-F_m(\theta^*_m)\big)\big)$ in the expression of $\widehat{KC'}(m)$.
\end{remark}
\begin{corollary}\label{cor:bre}
The criteria BIC, KC and KC' are consistent model selection criteria and satisfy Theorem \ref{Theo7}.
\end{corollary}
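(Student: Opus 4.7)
The strategy is to verify, for each of BIC, KC and KC', the two hypotheses of Theorem \ref{Theo7}: the decay in probability \eqref{condpen} and the separation condition \eqref{Cond7} for every $m\in\mathcal{M}$ with $m^*\subsetneq m$. Dividing \eqref{def_KC} and \eqref{NewS} by $n$ so as to match the normalization of \eqref{eq:crit}, the three penalties share the form
\[
\textnormal{pen}(m) \;=\; \frac{\log n}{n}\,|m| \;+\; \frac{R_n(m)}{n} \;+\; \frac{c(m)}{n},
\]
where $R_n(m):=\log\det(-\widehat F_n(m))$ is the only data-dependent piece (with the convention $R_n(m)\equiv 0$ for BIC) and $c(m)$ is an $n$-free constant ($0$ for BIC and KC, and $2\log|m|-|m|\log(2\pi)$ for KC').

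The BIC case is immediate: the penalty is deterministic and tends to $0$, so \eqref{condpen} is trivial and \eqref{Cond7} reduces to $\log(n)(|m|-|m^*|)\to\infty$ together with $|e_n(m)-\E e_n(m)|\equiv 0$. For KC and KC' the only new ingredient needed is that $R_n(m)$ converges almost surely and in $L^1$ to $\log\det(-F_m(\theta^*_m))$. The a.s.\ part comes from $\widehat\theta_m\to\theta^*_m$ a.s.\ (strong QMLE consistency from \cite{barW}), continuity of $\theta\mapsto F_m(\theta)$ under {\bf A5}, and the uniform law of large numbers applied to $\partial^2_{\theta\theta}\widehat\gamma_n$ of the type already used in the proof of Lemma \ref{prop1}. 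The $L^1$ part follows by uniform integrability, which I would derive by combining the $L^8$ bound on $\widehat F_n(m)^{-1}$ given by {\bf A4} (which prevents $R_n(m)\to -\infty$) with moment bounds on $\widehat F_n(m)$ itself available under {\bf A0}--{\bf A5} via $\|\xi_0\|_r<\infty$ with $r>8$ and the Lipschitz control of $\partial^2 f_\theta,\partial^2 M_\theta$ (which prevents $R_n(m)\to +\infty$). Together these give $\sup_n \E|R_n(m)|^2<\infty$, which will suffice.

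From this, \eqref{condpen} is a Markov-inequality calculation: for $n$ large enough the deterministic part of $\textnormal{pen}(m)$ is below $\varepsilon/2$, so
\[
n\,\P\bigl(\textnormal{pen}(m)\ge\varepsilon\bigr)\;\le\;n\,\P\bigl(|R_n(m)|\ge n\varepsilon/4\bigr)\;\le\;\frac{16\,\sup_n\E|R_n(m)|^2}{\varepsilon^2\,n}\;\longrightarrow\;0.
\]
For \eqref{Cond7}, set $Y_n:=R_n(m)-R_n(m^*)$; then $e_n(m)=\log(n)(|m|-|m^*|)/n + Y_n/n + (c(m)-c(m^*))/n$. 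Since $R_n$ converges in $L^1$ to a finite constant, $\E[Y_n]=O(1)$ and $\E|Y_n-\E Y_n|\to 0$, so $n\,\E[e_n(m)]=\log(n)(|m|-|m^*|)+O(1)\to+\infty$ and $n\,\E|e_n(m)-\E e_n(m)|=\E|Y_n-\E Y_n|\to 0$. Theorem \ref{Theo7} therefore applies to BIC, KC and KC', yielding both consistency \eqref{consist} and the efficiency inequalities \eqref{bornewidepen}--\eqref{borneopti}.

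The main obstacle I expect is the uniform integrability of $R_n(m)$: the logarithm blows up if $\det(-\widehat F_n(m))$ approaches $0$ or $+\infty$. Assumption {\bf A4} was expressly designed to control the former on an event of probability arbitrarily close to $1$ (the Egorov reduction mentioned after its statement), and the moment and Lipschitz conditions in {\bf A0} and {\bf A5} control the latter. Once the $L^2$ bound $\sup_n\E|R_n(m)|^2<\infty$ is in place, the rest of the argument is the routine Markov/$L^1$ bookkeeping above.
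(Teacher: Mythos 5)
Your proposal is correct, and it is worth pointing out that the paper itself gives no proof of this corollary: the BIC case is handled in one sentence after Theorem \ref{Theo7} (where $e_n(m)=\frac{\log n}{n}(|m|-|m^*|)$ is deterministic), and the KC/KC$'$ cases are simply asserted there and again after \eqref{NewS}. Your verification therefore supplies an argument the authors leave implicit, and your reduction is the right one: normalizing \eqref{def_KC} and \eqref{NewS} by $n$ to match \eqref{eq:crit}, everything hinges on the single claim that $R_n(m)=\log\det(-\widehat F_n(m))$ converges a.s.\ and in $L^1$ to a finite constant, so that the random part of $e_n(m)$ is $O(1/n)$ in $L^1$ and concentrates while $\log(n)(|m|-|m^*|)$ diverges. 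You correctly isolate uniform integrability of the $\log\det$ as the only delicate point, and your route works: the upper tail is controlled by $\det(-\widehat F_n(m))\le\|\widehat F_n(m)\|^{|m|}$ together with the moment bound $\sup_n\E\big[\|\partial^2_{\theta^2}\widehat\gamma_n(\cdot)\|^4_\Theta\big]<\infty$ already invoked in the proof of Lemma \ref{prop1}, and the lower tail by {\bf A4} via $-\log\det(-\widehat F_n(m))\le|m|\,\log\|(-\widehat F_n(m))^{-1}\|$ and $(\log x)^2\le Cx$ for $x\ge 1$. Two points should be made explicit rather than cited. First, {\bf A4} is stated for $\partial^2_{\theta^2}L_n$ at points converging a.s.\ to $\theta^*$, whereas $\widehat F_n(m)$ involves $\partial^2_{\theta^2}\widehat L_n$ at $\widehat\theta_m$; the gap is bridged by the bounds on $\E\big\|\partial^2_{\theta^2}L_n-\partial^2_{\theta^2}\widehat L_n\big\|_\Theta$ that the paper uses elsewhere, but it is a genuine step (and for \eqref{condpen} applied to misspecified $m$, note that only the upper tail of $R_n(m)$ is needed, so {\bf A4}, which is restricted to $\overline\theta_{m,n}\to\theta^*$, is not required there). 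Second, the hypothesis of Theorem \ref{Theo7} as written demands $e_n(m)>0$ pointwise, which for KC and KC$'$ holds only with probability tending to one since $R_n(m)-R_n(m^*)$ could be negative enough at moderate $n$; inspection of the proof of Theorem \ref{Theo7} shows that only $\E[e_n(m)]>0$ for large $n$ is actually used, so this is harmless, but it deserves a remark. With those two clarifications your argument is complete.
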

\noindent Thus, these three criteria are asymptotically consistent and asymptotically efficient following the inequalities \eqref{bornewidepen} and \eqref{borneopti}. Monte-Carlo experiments in Section \ref{sec:simu} will also exhibit that $\widehat m_{KC'}$, which is a data-driven criterion, outperforms $\widehat m_{BIC}$ in terms of consistency and efficiency when the $n$ size of the trajectory is of the order of a hundred or a thousand.
\section{Examples of computations of the asymptotic expectation of ideal penalties}\label{Exam}
From \cite{barW}, with $\mu_4= \E[\xi_0^4]$, $f_\theta^0$ and $H_\theta^0$ defined in \eqref{R}, we have for $m^*\subset m$ and $i,j \in m$:
\begin{eqnarray}
\nonumber  \big (G_m(\theta^*_m)\big )_{i,j}&=& \E\Big[ \frac { \partial_{\theta_i} f_{\theta_m^*}^0\,\partial_{\theta_j} f_{\theta_m^*}^0}{ H_{\theta_m^*}^0} +\frac {(\mu_4 -1)} 4 \,  \frac{ \partial_{\theta_i} H_{\theta_m^*}^0\,\partial_{\theta_j} H_{\theta_m^*}^0}{(H_{\theta_m^*}^0)^{2}}\Big] \\
 \big (F_m(\theta^*_m)\big )_{i,j}&=& - \E\Big[\frac { \partial_{\theta_i} f_{\theta_m^*}^0\,\partial_{\theta_j} f_{\theta_m^*}^0}{ H_{\theta_m^*}^0}  +\frac {1} 2 \,  \frac{ \partial_{\theta_i} H_{\theta_m^*}^0\,\partial_{\theta_j} H_{\theta_m^*}^0}{(H_{\theta_m^*}^0)^{2}}\Big]
\label{eq:m1}
,
\end{eqnarray}
Here there are 3 frameworks where $\mbox{Trace}\Big( \big (F_m(\theta^*_m)\big )^{-1}  \,G_m(\theta^*_m)\Big )$ can be computed for $m^*\subset m$: \\
~\\
\noindent 1/ A first and well-known case is the Gaussian case. Indeed, when $(\xi_t)$ is a Gaussian white noise, then $\mu_4=3$ and then from \eqref{eq:m1}, for any $i,j \in m$,
\begin{multline*}
\big (G_m(\theta_m^*)\big )_{i,j}=- \big (F_m(\theta_m^*)\big )_{i,j} \\ \Longrightarrow ~~-2 \,\mbox{Trace}\big( \big (F_m(\theta^*_m)\big )^{-1}  \,G_m(\theta^*_m)\big )=2 \, \mbox{Trace}\big(I_{|m|}\big )=2 \,|m|, 
\end{multline*}
with $I_\ell$ the identity matrix of size $\ell \in \N^*$. As a consequence, in the Gaussian framework, for $m^*\subset m$, the expectation of the ideal penalty is exactly the classical Akaike Criterion (AIC). \\
~\\
\noindent 2/ A frequent case is when the parameter $\theta$ identifying an affine causal model $ X_t=M_{\theta}^t\, \xi_t+f_{\theta}^t$ can be decomposed as $\theta=(\theta_1, \theta_2)'$ with  $f_{\theta}^t=\widetilde{f}^t_{{\theta}_1}$ and $M_{\theta}^t=\widetilde{M}^t_{{\theta}_2}$. Let $p_1,p_2$ such that $p_1=|\theta_1|$, $p_2=|\theta_2|$ and $|m|=p_1+p_2$.\\
In such a case, from (\ref{eq:m1}), it is clear that all the terms $F_m(\theta_m^*)_{i,j}$ and $G_m(\theta_m^*)_{i,j}$ are equals to zero for $i=1,\ldots, p_1$ and $j=1,\cdots, p_2$ implying
$$
F_m(\theta_m^*)= -\left (
\begin{array}{cc}
A_{1,p_1} &  O_{p_1,p_2} \\
O_{p_2,p_1}  &  B_{p_1+1,p_1+p_2}
\end{array}
\right )  ~
\mbox{and} \quad
G_m(\theta_m^*)= \left (
\begin{array}{cc}
A_{1,p_1} &  O_{p_1,p_2} \\
O_{p_2,p_1}  &  \frac {(\mu_4-1)}2\, B_{p_1+1,p_1+p_2}
\end{array}
\right )
$$
where $O$ is the null matrix and from the expressions of matrix $G_m(\theta^*_m)$ and $F_m(\theta^*_m)$ in \eqref{eq:m1},   \\
$A_{1,p_1}=\Big(\E\Big[ \frac { \partial_{\theta_i} f_{\theta_m^*}^0\,\partial_{\theta_j} f_{\theta_m^*}^0}{ H_{\theta_m^*}^0} \Big]\Big)_{1\le i,j\le p_1} $\hspace{-0.4cm} and $\quad	 B_{p_1+1,p_1+p_2}=\Big(\frac 1 2 \, \E\Big[ \frac{ \partial_{\theta_i} H_{\theta_m^*}^0\,\partial_{\theta_j} H_{\theta_m^*}^0}{(H_{\theta_m^*}^0)^{2}}\Big]\Big)_{p_1+1\le i,j\le p_1+p_2}$. As a consequence, 
\begin{align*}
G_m(\theta_m^*)\,F_m(\theta_m^*)^{-1}& =-\mbox{Diag}\Big (A_{1,p_1},\frac{(\mu_4-1)}2\, B_{p_1+1,p_1+p_2}\Big ) \times \mbox{Diag}\big (A_{1,p_1}^{-1}, B_{p_1+1,p_1+p_2}^{-1}\big)\\
&=- \mbox{Diag}\big (I_{p_1},\frac{(\mu_4-1)}2\, I_{p_2} \big )
\end{align*}
and we obtain
\begin{equation}\label{Trace1}
-2 \, \mbox{Trace}\Big( \big (F_m(\theta^*_m)\big )^{-1}  \,G_m(\theta^*_m)\Big )= 2\, p_1+ (\mu_4-1)\,p_2.
\end{equation}
This setting includes many classical times series: 
\begin{itemize}
	\item For ARMA$(p,q)$ processes, we have $X_t=f_{\theta}^t+\sigma\, \xi_t$ since $X_{t}+a_1\, X_{t-1}+ \cdots +a_p \, X_{t-p}=\sigma\, \big (\xi_t+b_1\, \xi_{t-1}+\cdots+b_q \, \xi_{t-q}\big )$ for all $t \in \Z$. Then $\theta_1=\big (a_1,\ldots,a_p,b_1,\ldots,b_q) $ and $\theta_2=\sigma$. The penalty term is slightly different according to $\sigma$ is known or not:
	\begin{enumerate}
		\item[(a)] if $\sigma$ is known, then $\theta=\theta_1$ and $G_m(\theta^*)=- F_m(\theta^*)$, so that we recover exactly the AIC  penalty term:
		$$
		-2 \, \mbox{Trace}\big(G_m(\theta^*_m)F_m(\theta^*_m)^{-1} \big)=2\, |m|=2\,(p+q);
		$$
		\item[(b)] if $\sigma$ is unknown,  $\theta=(\theta_1,\sigma)$ and simple computations lead to 
		\begin{multline*}	
		F_m(\theta^*)= \left (
		\begin{array}{cc}
		(F_m(\theta^*)\big )_{1\le i,j \le |m|-1} &  0 \\
		0  &  -\frac{1}{2\,\sigma^4}
		\end{array}
		\right )  \\
\mbox{and}~
		G_m(\theta^*)= \left (
		\begin{array}{cc}
		(G_m(\theta^*)\big )_{1\le i,j \le |m|-1} &  0 \\
		0  &  \frac{(\mu_4-1)}{4\, \sigma^4}
		\end{array}
		\right )
		\end{multline*}
		where $(G_m(\theta^*)\big )_{1\le i,j \le |m|-1}= - (F_m(\theta^*)\big )_{1\le i,j \le |m|-1}$.\\ Thus, 
		we obtain
		$\displaystyle G_m(\theta^*)F_m(\theta^*)^{-1}= -\left (
		\begin{array}{cc}
		I_{1\le i,j \le |m|-1} &  0 \\
		0  &  \frac {\mu_4-1}2
		\end{array}
		\right ) $
		and therefore, with $|m|=p+q+1$ in this case,
$$
	 -2 \, \mbox{Trace}\big(G_m(\theta^*_m)F_m(\theta^*_m)^{-1} \big)= 2\, |m|+ (\mu_4-3)=2(p+q)+(\mu_4-1),
	$$
	and therefore once again the expectation of the ideal penalty leads to the AIC model selection.
	\end{enumerate}
	\item For GARCH$(p,q)$ processes (see \cite{francq2019}), we have $f_\theta=0$ and $X_t=M_\theta^t \, \xi_t$ since for any $t\in \Z$,
	$$
	\left\{ \begin{array}{ccl} X_t&=&\sigma_t \, \xi_t\\
	\sigma_t^2&=&\omega_0+a_1\, X_{t-1}^2+\cdots+ a_p\, X_{t-p}^2+b_1\, \sigma_{t-1}^2+\cdots+b_q\, \sigma_{t-q}^2
	\end{array} \right .  .
$$
Denote $\theta=\theta_2=(\omega_0,a_1,\ldots,a_p,b_1,\ldots,b_q)$. \\
Then we have $A_{p_1}=0$
 and therefore $G_m(\theta^*)=- \frac {(\mu_4-1)}2 \, F_m(\theta^*).$ As a result:
	\begin{equation*}
-2 \, \mbox{Trace}\big(G_m(\theta^*_m)F_m(\theta^*_m)^{-1} \big)=(\mu_4-1)\, |m|=(\mu_4-1)\, (p+q+1).
	\end{equation*}
		\item For APARCH$(\delta,p,q)$ processes (see \cite{ding}), we also have $f_\theta=0$ and $X_t=M_\theta^t \, \xi_t$ since for any $t\in \Z$,
		$$\left\{ \begin{array}{ccl} X_t&=&\sigma_t \, \xi_t\\
	\sigma_t^\delta&=&\omega_0+a_1\, (X_{t-1}-\gamma_1|X_{t-1}|)^\delta+\cdots+ a_p\, (X_{t-p}-\gamma_p|X_{t-p}|)^\delta \\
	&& \qquad \hspace{5.5cm}+b_1\, \sigma_{t-1}^\delta+\cdots+b_q\, \sigma_{t-q}^\delta
	\end{array} \right . .
$$
For such a process, $\theta=\theta_2=(\omega_0,a_1,\ldots,a_p,\gamma_1,\ldots,\gamma_p,b_1,\ldots,b_q)$ when we assume that $\delta$ is known, and, {\it mutatis mutandis}, the result is the same than for GARCH processes:
\begin{equation*}
-2 \, \mbox{Trace}\big(G_m(\theta^*_m)F_m(\theta^*_m)^{-1} \big)=(\mu_4-1)\, |m|=(\mu_4-1)\, (2p+q+1).
	\end{equation*}
\end{itemize}
\noindent 3/ Otherwise, the computations are no longer easy. Let us see the example of the family of $AR(1)-ARCH(p)$ processes. Then
for any $t \in \Z$ we have $X_t=\phi X_{t-1}+Z_t $ where $Z_t=\xi_t\, \big (\alpha_0+ \alpha_1 Z_{t-1}^2+\cdots+\alpha_p Z_{t-p}^2\big )^{1/2}$. As a consequence, with $\theta=(\phi, \alpha_0,\ldots, \alpha_p)'$, we obtain for any $t\in \Z$,
\begin{multline*}
X_t=f_{\theta}(X_{t-1})+M_{\theta}(X_{t-1},\dots,X_{t-p-1})\,\xi_t \\
\mbox{with}\qquad \left \{ \begin{array}{lcl}
f_{\theta}(X_{t-1})&= &\phi \, X_{t-1} \\
 M_{\theta}(X_{t-1},\dots,X_{t-p})&=&\big (\alpha_0+ \sum_{i=1}^p \alpha_i (X_{t-i}-\phi X_{t-i-1})^2\big )^{1/2}
\end{array} \right ..
\end{multline*}
Thus the parameter $\phi$ is present in $f_\theta$ as well as in $M_\theta$.
From (\ref{eq:m1}), and with the notations of 1/, we obtain:
\begin{multline*} 
F_m(\theta_m^*)= -\left (
\begin{array}{cc}
A_{1,1}&  O_{1,p+1} \\
O_{p+1,1}  &  O_{p+1,p+1}
\end{array}
\right )-B_{1,p+2} \\
\mbox{and} \quad
G_m(\theta_m^*)=  \left (
\begin{array}{cc}
A_{1,1} &  O_{1,p+1} \\
O_{p+1,1}  &  O_{p+1,p+1}
\end{array}
\right )+ \frac {(\mu_4-1)} 2\,B_{1,p+2}.
\end{multline*}
As a consequence, 
$$
G_m(\theta_m^*)=-\frac {(\mu_4-1)} 2 \, F_m(\theta_m^*)+\frac {(\mu_4-3)} 2\, \left (\begin{array}{cc} A_{1,1}&  O_{1,p+1} \\ O_{p+1,1}  & O_{p+1,p+1} \end{array} \right ). 
$$  
Thus, with $|m|=p+2$, 
$$
G_m(\theta_m^*)\, F_m^{-1}(\theta_m^*)=-\frac {(\mu_4-1)}2\, I_{|m|}+\frac {(\mu_4-3)} 2\,\left (
\begin{array}{cc}
A_{1,1}&  O_{1,p+1} \\
O_{p+1,1}  & O_{p+1,p+1}
\end{array}
\right )\,  F_m^{-1}(\theta_m^*) .  $$ 
Whatever the matrix $F_m^{-1}(\theta_m^*)$, we have $\left (
\begin{array}{cc}
A_{1,1}&  O_{1,p+1} \\
O_{p+1,1}  & O_{p+1,p+1}
\end{array}
\right )\,  F_m^{-1}(\theta_m^*) =\left (
\begin{array}{cc}
c(\theta^*_m)&  O_{1,p+1} \\
O_{p+1,1}  & O_{p+1,p+1}
\end{array}
\right )  $ with $c(\theta^*_m) =c(\theta^*)\in \R$ since $m^*\subset m$. Then for all $m^*\subset m$,
$$-2 \, \mbox{Trace}\big(G_m(\theta^*_m)F_m(\theta^*_m)^{-1} \big)=-2 \,c(\theta^*) +(\mu_4-1)\,|m|,$$
where $-2 \,c(\theta^*)$ does not depend on $m$.

\section{Numerical Studies}\label{sec:simu}
This section aims to investigate the numerical behavior of the model selection criteria studied in Section \ref{sec:resul} and Section \ref{sec:bay} using {\tt R} software. 

\noindent To do that, three Data Generating Processes (DGP) have been considered:\\
\begin{center}
\begin{tabular}{lll} 
	\mbox{DGP I} &\mbox{AR}(2)   & $X_t=0.4\,X_{t-1}+0.4\,X_{t-2}+\xi_t$,\\
	\mbox{DGP II} & \mbox{ARMA}(1,1)   & $X_t-0.5\, X_{t-1}=\xi_t+0.6\, \xi_{t-1}$,\\
	\mbox{DGP III}& \mbox{GARCH}(1,1)  & $X_t=\sigma_{t} \, \xi_t$ \quad \mbox{with} \; $\sigma_{t}^2=1+0.35X_{t-1}^2+0.4\sigma_{t-1}^2$,
\end{tabular}
\end{center}
where $(\xi_t)_t$ is a Gaussian white noise with variance unity. 
\begin{remark} \label{Ident2}
As already observed in the Remark \ref{Ident}, Assumption {\bf A1} is never satisfied for ARMA processes in case of overfitting. However, in the used optimization under constraint algorithm (program {\tt nloptr}), we initialized $\theta$ at $0$ (except for the variance estimator). By this way, we have noticed in Monte-Carlo experiments that the algorithm always converges to $\theta^*$ and not other solution due to the overfitting.
\end{remark}
\noindent In order to illustrate the obtained theoretical asymptotic behaviors, we have realized Monte-Carlo experiments where the performance of the AIC, BIC and KC' criteria are compared using the following parameters:
\begin{itemize}
\item The considered family of competitive models is the same for the three DGP
$$\mathcal{M}=\big \{\mbox{ARMA}(p,q) \;\;\mbox{and}\;\; \mbox{GARCH}(p,q) \;\mbox{processes}\;\; \mbox{with}\;\;0 \le p,q \le 6\big\}.$$ 
\item Several values of $n$, the observed trajectory length, are considered: {200, 500, 1000, 2000}.
\item For each $n$ and DGP, we have generated 500 independent replications of the trajectories.
\end{itemize}
Hence, for each replication, the selected models $\widehat m_{AIC}$, $\widehat m_{BIC}$ and $\widehat m_{KC'}$ are computed. 
Then, 
\begin{enumerate}
\item The consistency property is illustrated by the computation of the frequency (percentage) of selecting  the true model versus a model other than the true one (called here "wrong"). 
\item For the efficiency property (Theorem \ref{theo:1}, \ref{Theo7} and \ref{Theo8}), we first compute a very sharp estimator $\widetilde R$ of the risk function $R$ for each DGP: $\widetilde R=\widehat{\gamma}_N$ computed from an independent and very large ($N=10^6$) trajectory of the DGP. By this way, and we obtain an estimator $\widetilde {\ell}(\widehat{\theta}_{\widehat m},\theta^*)=\widetilde R(\widehat{\theta}_{\widehat m})-\widetilde R(\theta^*)$ of $ {\ell}(\widehat{\theta}_{\widehat m},\theta^*)$ for $\widehat m=\widehat m_{AIC}$, $\widehat m_{BIC}$ and $\widehat m_{KC'}$. Then, we compute
\[
\widehat ME:= n\,\Big(\overline{\widetilde {\ell}(\widehat{\theta}_{\widehat m},\theta^*)}-\overline{\widetilde {\ell}(\widehat{\theta}_{m^*},\theta^*)}\Big)
\]
where $\overline{\widetilde \ell~}$ is the average of $\widetilde \ell$ over the 500 replications. Therefore $\widehat{ME}$ is an estimator of $n\, \big ( \E\big [{\ell}(\widehat{\theta}_{\widehat m},\theta^*) \big ]-\underset{m \in \mathcal{M}}{\min} \E\big [{\ell}(\widehat{\theta}_{m},\theta^*) \big ] \big)$, which appears in 
\eqref{borneopti} and \eqref{bornewidepen2}.
\end{enumerate} 
\noindent The results of Monte-Carlo experiments are reported in Table \ref{tab:2}, devoted to the consistency property, and in Table \ref{tab:21}, devoted to the efficiency property. \\
\begin{table}
\begin{center}
	\begin{tabular}{|l|c|lcr|lcr|lcr|lcr|}
		\toprule
	\multicolumn{1}{|c|}{}&	\multicolumn{1}{|c|}{n}& \multicolumn{3}{c}{$200$ } & \multicolumn{3}{c}{$500$ }& \multicolumn{3}{c}{$1000$ } & \multicolumn{3}{c|}{$2000$ }\\
	&	&  AIC& BIC & KC'  &  AIC& BIC & KC' &   AIC& BIC & KC' &  AIC& BIC & KC'\\
		\midrule
	DGP I &	True &17.2    &36.2   &35.6 &30.4 &73.2 &78.2    &36.4    &87.4  & 92.2  & 32.4 &96.2 &98.4 \\
	&	Wrong &82.8     &63.8    &64.4  &69.6   &26.8 &21.8  &63.6  &13.6 & 7.8& 67.6 &03.8 &01.6\\
		\hline
	 \midrule
		DGP II &	{ True} & 27.8 & 80.8 &92.0 & 30.6 &88.4 &96.6 & 31.0 & 89.1 & 97.5 & 33.3 &95.2 &99.9\\
		&{Wrong} & 72.2 &19.2 &08.0 &69.7 & 11.6& 03.4 & 69.0 & 10.9 & 02.5 & 66.7 &04.8 &00.1\\
		\hline
		\midrule
		DGP III &	True &00.4 &10.8 &14.8 &01.4 &32.2 &55.8 &01.0  &54.8 & 82.0 &02.0 &75.8&93.8 \\
	&	Wrong &99.6 &89.2 &85.2 &98.6 &67.8 &44.2  &99.0  &45.2 &18.0&98.0 &24.2 &06.2 \\
		\bottomrule
	\end{tabular}
\end{center}
\caption{Percentage of "true" selected models depending on the criterion and sample's length for DGP I-III.} \label{tab:2}
\end{table}
\vspace{0.6cm}

\begin{table}
\begin{center}
	
	\begin{tabular}{|c|lcr|lcr|lcr|lcr|}
		\toprule
		\multicolumn{1}{|c|}{n}& \multicolumn{3}{c}{$200$ } & \multicolumn{3}{c}{$500$ }& \multicolumn{3}{c}{$1000$ } & \multicolumn{3}{c|}{$2000$ }\\
		&  AIC& BIC & KC'  &  AIC& BIC & KC' &   AIC& BIC & KC' &  AIC& BIC & KC'\\
		\midrule
	
		DGP I &4.91     &2.59    &5.35  &3.46   &1.11 &1.18  &3.08  &0.98 & 0.75& 3.05 &0.38 &0.29\\
		\hline
		\midrule
	
		DGP II & 3.66 &0.87 &0.54 &3.37 & 0.42& 0.11 & 2.62 & 0.15 & 0.05 & 2.5 &0.10 &0.04\\
		\hline
		\midrule
		
		DGP III &2.39 &4.63 &13.16 &2.53 &4.08 &9.54  &2.69  &2.96 &2.52&3.21 &2.06 &0.76 \\
		\bottomrule
	\end{tabular}
\end{center}
\caption{$\widehat {ME}$ of selected models depending on the criterion and the sample's length for DGP I-III.} \label{tab:21}
\end{table}
\noindent {\bf Conclusions of numerical experiments:} 
\begin{itemize}
\item Concerning the consistency properties, the numerical results of Table \ref{tab:2} show that the percentages of choice of the true model tend towards 100 for increasing $n$ and with the criteria BIC and KC', and this corresponds well to the obtained asymptotic result (Corollary \ref{cor:bre}). And as it could also be deduced from the theory (see Corollary \ref{corAIC}) the AIC criterion is not a consistent one. Moreover, we observe that the KC' criterion outperforms BIC when dealing as well as small and large samples for all considered DGP. These results confirm that it is important to also consider the neglected terms in the derivation of the BIC criterion. 

\item From the results of Table \ref{tab:21}, we notice a decrease of the residual term $\widehat{ME}$  to $0$ for increasing $n$ for the consistent criteria BIC and KC'. This corresponds well to the $o(1/n)$ term observed in \eqref{borneopti}. We also observe that this convergence to 0 is globally faster with the KC' criterion than with the BIC one. Thus, in terms of efficiency as well as in terms of consistency, the KC' criterion performs even better than the BIC one for the selected DGPs. Finally, as shown by Theorems \ref{theo:1} and \ref{Theo8}, the  statistic $\widehat{ME}$ seems asymptotically bounded and does not converge to $0$ when the AIC criterion is applied to select the model.  This confirms that BIC and especially KC' criteria are more accurate in terms of efficiency than AIC criterion.
\end{itemize}

\section{Proofs}\label{sec:proof}
\subsection{Proofs of Section \ref{TLCm}}
The asymptotic normality of $\big ( \frac 1 n \big (\partial_{\theta_i} L_n(\theta_m^*) \big )_{i\in m}$ was established in \cite{barW}  and \cite{bar2019} when $m^*\subset m$ using a central limit theorem for stationary martingale difference. Here we extend this result to any $m\in {\cal M}$:
\begin{proposition} \label{theo0}
Under Assumption {\bf A0-A5}, for any $\theta \in \Theta$, we have
\begin{multline}
\sqrt{n} \, \Big (\frac 1 n \,  \partial _{\theta} L_n(\theta) + \frac 1 2 \, \E \big [\partial _{\theta} \gamma(\theta,X_0) \big ]   \Big ) \limiteloin {\cal N} \big ( 0 \, , \, G(\theta) \big ) \\
 \mbox{with}\quad G(\theta):=\frac 1 4 \, \Big (\sum_{t\in \Z} \cov \big ( \partial_{\theta_i} \gamma(\theta,X_0)\, , \, \partial_{\theta_j} \gamma(\theta,X_t)\big )\Big )_{1\leq i,j\leq d}.
\label{clt0}
\end{multline}
\end{proposition}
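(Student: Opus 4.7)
The plan is to reduce the statement to a central limit theorem for a stationary centered process and then verify the conditions of a standard CLT for causal Bernoulli shifts. Writing $L_n(\theta) = -\tfrac12 \sum_{t=1}^n \gamma(\theta, X_t)$, the left-hand side of \eqref{clt0} becomes $-\tfrac{1}{2\sqrt n}\sum_{t=1}^n Z_t$ where $Z_t := \partial_\theta \gamma(\theta, X_t) - \E[\partial_\theta \gamma(\theta, X_0)]$ is stationary and centered. Under Assumption \textbf{A0}, the sequence $(X_t)$ admits a representation as a causal Bernoulli shift $X_t = \Phi((\xi_{t-k})_{k\geq 0})$, so $Z_t$ itself is a measurable functional of $(\xi_s)_{s\leq t}$.

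The first step is to show that $Z_t$ inherits good Lipschitz-type coupling properties from \textbf{A5}. Differentiating $\gamma(\theta, x) = (x_0 - f_\theta(x_{-}))^2/H_\theta(x_{-}) + \log H_\theta(x_{-})$ yields an expression involving $\partial_\theta f_\theta$, $\partial_\theta M_\theta$, $f_\theta$, $M_\theta$ and $1/H_\theta$; Assumption \textbf{A3} ensures the denominators stay bounded away from $0$, and Assumption \textbf{A5} guarantees that the Lipschitz coefficients of $\partial_\theta f_\theta$ and $\partial_\theta M_\theta$ decay as $O(j^{-\delta})$ with $\delta > 7/2$. Combined with $\|\xi_0\|_r < \infty$ for some $r > 8$, a routine product and quotient calculation gives $\|Z_t - Z_t^{(m)}\|_{2+\eta} = O(m^{-\delta+1})$ for some small $\eta > 0$, where $Z_t^{(m)}$ is the version computed from a truncated past $(\xi_{t-k})_{0 \leq k \leq m}$.

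The second step is to run a classical Bernstein-block/$m$-dependent approximation: apply the standard CLT to $Z_t^{(m)}$ (which is $m$-dependent), use the $L^{2+\eta}$-approximation above to control the error between $\frac{1}{\sqrt n}\sum Z_t$ and $\frac{1}{\sqrt n}\sum Z_t^{(m)}$ uniformly in $n$, and let $m\to\infty$. The polynomial rate with $\delta > 7/2$ provides the summability needed for both the variance series and the tightness of the truncation error. Alternatively, one may invoke directly the Hannan or Dedecker-Merlevède CLT for causal Bernoulli shifts with the projective/Lipschitz condition supplied by \textbf{A5}.

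Finally, the asymptotic variance is identified by computing $\lim_n \v\big(\tfrac{1}{\sqrt n}\sum_{t=1}^n Z_t\big) = \sum_{t\in\Z} \cov(Z_0, Z_t)$, whose absolute convergence again follows from the polynomial decay of the Lipschitz coefficients; the factor $1/4$ in $G(\theta)$ is produced by the $-1/2$ prefactor in $L_n$. The main obstacle I expect is the technical bookkeeping in Step 1: the derivative $\partial_\theta \gamma$ mixes $f_\theta$, $M_\theta$ and their $\theta$-derivatives multiplicatively, so obtaining a clean Lipschitz bound with the right exponent requires carefully combining the six assumptions $\mathbf{A}(\Psi_\theta, \Theta)$ of \textbf{A5} together with the $r$-th moment of $\xi_0$, and it is this step that dictates the specific thresholds $r>8$ and $\delta>7/2$.
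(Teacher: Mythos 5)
Your strategy is sound and close in spirit to the paper's, but the technical vehicle differs. The paper also reduces to a CLT for the stationary centered sequence $Z_t=\sum_j c_j\big(\partial_{\theta_j}\gamma(\theta,X_t)-\E[\partial_{\theta_j}\gamma(\theta,X_0)]\big)$ (note the Cram\'er--Wold step, which you omit but which is routine), and it also derives the weak dependence of $Z_t$ from the Lipschitz decay in \textbf{A5} via a coupling with an independent replication of the noise. Where it diverges from your plan is in the limit theorem invoked: instead of an $m$-dependent truncation plus a Bernstein-block argument, the paper quantifies the dependence through $\tau$-coupling coefficients (Lemmas \ref{lem0} and \ref{lem00} give $\tau_{Z}(s)=O(s^{1-\delta}\log s)$) and then applies the Dedecker--Doukhan CLT, whose hypotheses are a moment bound $\E[|Z_0|^{\kappa}]<\infty$ with $\kappa=8/3$ (supplied by Lemma \ref{lem:7}, since $r>8$ gives $r/3>8/3$) and the summability $\sum_s s^{1/(\kappa-2)}\tau_Z(s)=\sum_s s^{3/2}\tau_Z(s)<\infty$, which holds precisely because $\delta>7/2$. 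This is essentially your ``alternative'' route and it buys a clean calibration of the thresholds. The one place where your primary route needs real care is the uniform-in-$n$ variance control of the remainder $\frac{1}{\sqrt n}\sum_t(Z_t-Z_t^{(m)})$: bounding $\sum_{|h|<n}|\cov(Z_0-Z_0^{(m)},Z_h-Z_h^{(m)})|$ for lags beyond $m$ requires a covariance inequality whose exponent trades off the available moments against the decay rate, and with only $2+\eta$ moments for small $\eta$ the required decay can exceed $\delta>7/2$; you should take $2+\eta=8/3$ (the best the assumptions give) and verify the resulting summability condition, which is exactly the bookkeeping the $\tau$-dependence CLT packages for you. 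Your identification of the limiting variance and of the factor $1/4$ is correct.
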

\noindent The main tool we use here for establishing Theorem \ref{theo0} is the notion of $\tau$-dependence for stationary time series and more precisely, the $\tau$-dependence coefficients, which are a version of the coupling coefficients introduced in \cite{dp} and used for stationary infinite memory chains. The reader is deferred to the  lecture notes \cite{DDLLLP}  for complements and details on coupling, based on the Wasserstein distance between probabilities defined as below. Its stationary version is:
\begin{definition}\label{defwd}
Let $(\Omega,\mathcal{C}, \P)$ be a probability space,
$\mathcal{M}$ a $\sigma$-subalgebra of $\mathcal{C}$ and $Z$ a
random variable with values in $E$. Assume that $\|Z\|_p<\infty$ and define the coefficient $\tau^{(p)}$ as
\begin{equation*}
\tau^{(p)}(\mathcal{M},Z)=
\Big \|\sup_{f\in\Lambda_1(E)}\Big\{\Big|\int f(x)\P_{Z|\mathcal{M}}(dx)-\int
f(x)\P_{Z}(dx) \Big |\Big \}\Big \|_p.
\end{equation*}
\end{definition}
\noindent Using the definition of $\tau$, the dependence between the past of the sequence $(Z_t)_{t\in\Z}$ and its future $k$-tuples may be assessed: consider the norm
$\|x-y\|=\|x_1-y_1\|+\cdots+\|x_k-y_k\|$ on $E^k$,
set $\mathcal{M}_p=\sigma(Z_t,t\le p)$ and define
\begin{eqnarray*}
\tau_Z^{(p)}(s)&=&\sup_{k>0} \Big \{ \max_{1\le l\le k}
\frac1l\sup\Big\{\tau^{(p)}(\mathcal{M}_p,(Z_{j_1},\ldots,Z_{j_l}))\mbox{ with }p+s\le
j_1<\cdots <j_l\Big\} \Big \}.
\end{eqnarray*}
Finally, the time series $(Z_t)_{t\in\Z}$ is {\it $\tau_Z^{(p)}$-weakly dependent} when its coefficients $\tau_Z^{(p)}(s)$ tend to $0$ as $s$ tends to infinity.
\begin{lemma}\label{lem0}
Under Assumption {\bf A0}, then for $p\leq r$ and $b^{(p)}_k= \alpha_k(M_\theta,\Theta) \, \| \xi_0\|_p +\alpha_k(f_\theta,\Theta)$ for any $j\in \N^*$, 
\begin{equation}\label{tauX} 
\tau_{X}^{(p)} (s) \le C \,\lambda_s \quad\mbox{with}\quad \lambda_s=  \inf_{1\leq r\le s} \Big \{ \Big (\sum_{k=1}^\infty b^{(p)}_k \Big )^{s/r}+\sum_{t= r+1}^\infty b^{(p)}_t \Big\}\quad \mbox{for $s \ge 1$}.
\end{equation}
\end{lemma}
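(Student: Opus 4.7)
The plan is to construct an explicit coupling of the process $X$ and bound the coupling error using the contraction property induced by Assumption {\bf A0}. Introduce an independent copy $(\xi'_t)_{t \in \Z}$ of $(\xi_t)$ and define the spliced innovation sequence $\xi^{*}_u = \xi'_u$ for $u \leq p$ and $\xi^{*}_u = \xi_u$ for $u > p$. Thanks to Assumption {\bf A0} and \cite{dou}, the affine recursion admits a unique stationary causal solution $(X^{*}_t)$ driven by $(\xi^{*}_t)$, which has the same distribution as $(X_t)$ and is independent of $\mathcal{M}_p = \sigma(X_u, u \leq p)$. Then the classical coupling inequality yields $\tau^{(p)}(\mathcal{M}_p, X_t) \leq \|X_t - X^{*}_t\|_p$, and for $l$-tuples the bound $\frac{1}{l}\tau^{(p)}(\mathcal{M}_p, (X_{j_1}, \ldots, X_{j_l})) \leq \max_{1 \leq i \leq l} \|X_{j_i} - X^{*}_{j_i}\|_p$ reduces the problem to controlling $\delta_t := \|X_t - X^{*}_t\|_p$ for $t \geq p + s$.

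Next, I would derive a recursive inequality for $\delta_t$. For $t > p$ the innovations coincide ($\xi_t = \xi^{*}_t$), so
\[
X_t - X^{*}_t = \bigl[M((X_{t-i})) - M((X^{*}_{t-i}))\bigr]\xi_t + \bigl[f((X_{t-i})) - f((X^{*}_{t-i}))\bigr].
\]
Using independence of $\xi_t$ and the past, the Lipschitz conditions {\bf A}$(f_\theta, \Theta)$ and {\bf A}$(M_\theta, \Theta)$, and Minkowski's inequality (valid since $p \leq r$), one obtains
\[
\delta_t \leq \sum_{k=1}^\infty b_k^{(p)} \delta_{t-k}, \qquad t > p,
\]
with $b_k^{(p)} = \alpha_k(f_\theta, \Theta) + \|\xi_0\|_p \alpha_k(M_\theta, \Theta)$. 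For $t \leq p$, stationarity gives the trivial bound $\delta_t \leq 2 \|X_0\|_p =: M_0 < \infty$.

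The heart of the argument is then to iterate this recursion with a splitting trick. For any $1 \leq r \leq s$, set $A_r = \sum_{k=1}^r b_k^{(p)}$ and $B_r = \sum_{k > r} b_k^{(p)}$, and note $A_r \leq a := \sum_k b_k^{(p)} < 1$ by Assumption {\bf A0}. Splitting lags $\leq r$ from lags $> r$ (and dominating $\delta_{t-k}$ by $M_0$ or by the running maximum, as appropriate) leads to
\[
\delta_t \leq A_r \max_{1 \leq k \leq r} \delta_{t-k} + B_r M_0.
\]
Iterating this inequality $\lfloor s/r \rfloor$ times, one pushes the maximum back to indices $\leq p$ (where $\delta \leq M_0$) and sums the geometric error to get
\[
\delta_{p+s} \leq a^{s/r} M_0 + \frac{M_0}{1-a} B_r \leq C \Bigl[\bigl(\textstyle\sum_k b_k^{(p)}\bigr)^{s/r} + \textstyle\sum_{t > r} b_t^{(p)}\Bigr].
\]
Taking the infimum over $1 \leq r \leq s$ delivers $\tau^{(p)}_X(s) \leq C \lambda_s$.

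The main obstacle is carefully handling the iteration in the third paragraph: the max over $r$ lags must be pushed back $\lfloor s/r \rfloor$ times without losing too much, and the accumulated tail contribution from $B_r$ must be shown to be summable (which is where the factor $(1-a)^{-1}$ enters). Once this is in hand, the extension from $l = 1$ to $l$-tuples is immediate by the triangle inequality in $L^p$ combined with monotonicity of $\delta_t$ in the sense that the worst configuration has all $j_i$ close to $p+s$.
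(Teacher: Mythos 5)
Your argument is correct and, in substance, coincides with the paper's: the paper's entire proof consists of checking that the map $T(x,\xi_0)=M_\theta(x)\,\xi_0+f_\theta(x)$ satisfies $\|T(x,\xi_0)-T(y,\xi_0)\|_p\le \sum_{k\ge 1} b_k^{(p)}\,|x_k-y_k|$ and then invoking Proposition 3.1 of \cite{dou}, whose proof is precisely the coupling-plus-contraction iteration you reconstruct (spliced innovations, the recursion $\delta_t\le\sum_k b_k^{(p)}\delta_{t-k}$, the split at lag $r$ and the geometric iteration yielding $a^{s/r}+\sum_{t>r}b_t^{(p)}$). So your proposal is a self-contained re-derivation of the cited result; the one step the paper actually carries out itself is the Lipschitz verification, which you also perform when establishing the recursive inequality.
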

\begin{proof}[Proof of Lemma \ref{lem0}]
This Lemma can be directly deduced from Proposition 3.1  of \cite{dou} where $T(x,\xi_0)=M_\theta(x)\, \xi_0 +f_\theta(x)$ for any $x \in \R^\infty$ and therefore 
$$\big \|T(x,\xi_0)-T(y,\xi_0) \big \|_p\leq  \|\xi_0\| _p \, \big | M_\theta(x)-M_\theta(y) \big |+\big | f_\theta(x)-f_\theta(y) \big |
$$ inducing $\big \|T(x,\xi_0)-T(y,\xi_0) \big \|_p \leq \sum_{k=1}^\infty b^{(p)}_k$.
\end{proof}
\begin{remark}
Using Assumption {\bf A0} and {\bf A5}, we deduce that $b^{(p)}_t =O\big (t^{-\delta}\big )$ with $\delta>7/2$, and therefore  $\tau_{X}^{(p)} (s) \leq \lambda_s =O\big (s^{1-\delta} \log s\big )$.  
\end{remark}
\noindent Now, under the Assumption {\bf A0}, since $X$ is a causal time series, define for any $j=1,\ldots,d$ and $\theta \in \Theta$,
$$
\phi^{(j)}_\theta\big ( (X_{t-k})_{k\geq 0} \big ):=\partial_{\theta_j} \gamma(\theta,X_t)=-2 \, \partial_{\theta_j} M^t_\theta \, \frac {(X_t-f_\theta^t)^2}{\big (M^t_\theta\big )^3}- 2 \, \partial_{\theta_j} f^t_\theta \, \frac {X_t-f_\theta^t}{\big (M^t_\theta\big )^2}+ 2 \, \frac {\partial_{\theta_j} M^t_\theta } {M^t_\theta}.
$$
Then we have:
\begin{lemma} \label{lem00}
Under Assumption {\bf A0-A5}, for any $j=1,\ldots, d$, for any $\theta \in \Theta$, the sequence $\big (\phi^{(j)}_\theta\big ( (X_{t-k})_{k\geq 0} \big )\big )_{t\in \Z}$ is a causal stationary sequence that is  $\tau_{\phi^{(j)}_\theta}^{(p)}$-weakly dependent where its coefficients $\tau_{\phi^{(j)}_\theta}^{(1)}(s)$ satisfies:
\begin{multline}\label{tauphi} 
\tau_{\phi^{(j)}_\theta}^{(1)}(s) \le C \,\Big ( \sum_{\ell=1}^{s} \big (\alpha_\ell(f_\theta,\Theta)+\alpha_\ell(M_\theta,\Theta)+\alpha_\ell(\partial_{\theta_j} M_\theta, \Theta) +\alpha_\ell(\partial_{\theta_j} f_\theta, \Theta) \big )\,\lambda_{s+1-\ell} \\+ \sum_{\ell=s+1}^{\infty} \big (\alpha_\ell(f_\theta,\Theta)+\alpha_\ell(M_\theta,\Theta)+\alpha_\ell(\partial_{\theta_j} M_\theta, \Theta) +\alpha_\ell(\partial_{\theta_j} f_\theta, \Theta) \big ) \Big ),
\end{multline}
$\mbox{for any  $s \ge 0$}$ where $(\lambda_s)$ is defined in \eqref{tauX}.
\end{lemma}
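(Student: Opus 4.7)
The plan is to realize $\phi^{(j)}_\theta$ as a sufficiently regular function of the infinite past of $X$, derive a random Lipschitz-type comparison, and then plug it into the standard truncation-plus-coupling argument that converts the $\tau^{(p)}$-weak dependence of $X$ established in Lemma~\ref{lem0} into the same property for $\phi^{(j)}_\theta$. Three assumptions carry the whole argument: {\bf A3} keeps $M_\theta^t$ and $H_\theta^t$ bounded away from zero uniformly in $\theta$, so that the denominators of $\phi^{(j)}_\theta$ do not blow up; {\bf A0} with $r>8$ provides enough moments on $X_t$, $f_\theta^t$, $M_\theta^t$ and their first $\theta$-derivatives; and {\bf A5} gives the summability and polynomial decay of the $\alpha$-coefficients.

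First I would establish a random-Lipschitz inequality for $\phi_\theta^{(j)}$ viewed as a map $\R^\infty \to \R$. Starting from its explicit expression, the functions $f_\theta, M_\theta, \partial_{\theta_j} f_\theta, \partial_{\theta_j} M_\theta$ are Lipschitz in $(x_k)_{k\geq 1}$ with coefficients bounded by the corresponding $\alpha_\ell(\cdot,\Theta)$. Using {\bf A3} to control $1/M_\theta^k$-factors and the mean value theorem on each of the three summands, I would obtain, for any $(x_k), (y_k) \in \R^\infty$,
\begin{equation*}
\bigl|\phi_\theta^{(j)}((x_k))-\phi_\theta^{(j)}((y_k))\bigr| \leq \Psi(x,y)\,\sum_{k=0}^\infty c_k(\theta)\, |x_k-y_k|,
\end{equation*}
where $c_0$ is a constant, $c_\ell(\theta) \leq C\bigl(\alpha_\ell(f_\theta,\Theta)+\alpha_\ell(M_\theta,\Theta)+\alpha_\ell(\partial_{\theta_j} f_\theta,\Theta)+\alpha_\ell(\partial_{\theta_j} M_\theta,\Theta)\bigr)$ for $\ell\geq 1$, and $\Psi(x,y)$ is a polynomial in $|x_0|,|y_0|$ and tails $\sum_k \alpha_k|x_k|$ whose $L^q$-moment under the joint law of $X$ and an independent copy is finite for some $q>1$ (this is where {\bf A0} with $r>8$ is used).

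Second I would carry out the coupling-plus-truncation argument à la Proposition 3.1 of \cite{dou}. Fix $t\geq p+s$ and introduce an independent copy $(X_t^*)_{t\in\Z}$ of $(X_t)$ independent of $\mathcal{M}_p$. For each lag $\ell$ with $1\leq \ell \leq s$, Definition~\ref{defwd} and Lemma~\ref{lem0} provide a coupling of $X_{t-\ell+1}$ against an independent copy at $L^1$-cost essentially $\tau_X^{(1)}(s-\ell+1) \leq C\,\lambda_{s+1-\ell}$; for $\ell>s$ the input $X_{t-\ell+1}$ is $\mathcal{M}_p$-measurable and is simply replaced by an independent copy at crude cost $2\,\|X_0\|_1 \cdot c_\ell$. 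Combining this with the Lipschitz bound and applying Hölder's inequality to factor $\Psi$ out via a conjugate pair $(q,q')$ yields the stated two-sum bound~\eqref{tauphi}, the constant $C$ absorbing both $\|\Psi\|_q$ and $\|X_0\|_1$.

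The main obstacle will be separating the random prefactor $\Psi$ from the Lipschitz sum in a way that preserves the linear structure of the bound and the $\tau^{(1)}$-flavour of the conclusion. This forces the use of a Hölder pair $(q,q')$: one needs $\Psi\in L^q$ uniformly in $\theta\in\Theta$, and one needs the coupling cost $\|X_{t-k}-X_{t-k}^*\|_{q'}$ to be controlled by a $\tau_X^{(q')}$-coefficient of the same order as $\lambda_s$. Both are guaranteed by the moment condition $r>8$ in {\bf A0} combined with {\bf A3} (to bound negative powers of $M_\theta^t$) and the decay rate $\delta>7/2$ of {\bf A5}; once this is done, everything else is direct bookkeeping using \eqref{tauX} and the summability of the $\alpha$'s.
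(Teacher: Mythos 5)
Your proposal is correct and follows essentially the same route as the paper: a Lipschitz-type moment inequality for $\phi_\theta^{(j)}$ in terms of the coordinate differences weighted by the $\alpha_\ell(\cdot,\Theta)$'s, combined with a coupling of $(X_t)$ against an independent replication and the bound $\tau_X(s)\le C\lambda_s$ from Lemma~\ref{lem0}. The only difference is that you derive the random-Lipschitz/H\"older step by hand, whereas the paper imports it ready-made (an $L^1$-versus-$L^4$ increment bound) from Proposition 4.1 of \cite{bdw}.
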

\begin{proof}[Proof of Lemma \ref{lem00}]
In the proof of Proposition 4.1 of \cite{bdw}, it has been proven for $U=(U_i)_{i\geq 1}$ and $V=(V_i)_{i\geq 1}$ such as $\sup_{i \geq 1} \big \{ \big \|U_i\|_4 \vee \big \|V_i\|_4 \big \}<\infty$ that there exists $C>0$ satisfying
\begin{multline} \label{EE}
\E \big [ \sup_{\theta\in \Theta} \big \|\phi_\theta^{(j)}(U)  -\phi_\theta^{(j)}(V)\big \|\; \big ] 
\leq C 
\, \Big (\|U_1-V_1\|_4 \\
+\sum_{i=2}^\infty \big ( \alpha_i(f_\theta,\Theta)+\alpha_i(M_\theta,\Theta)+\alpha_i(\partial_{\theta_j}f_\theta ,\Theta)+\alpha_i(\partial_{\theta_j}M_\theta ,\Theta)\big ) \|U_i-V_i\|_4 \Big ).
\end{multline}
Using coupling techniques, if  $(\widetilde \xi_t)_{t\in \Z}$ is an independent replication of $(\xi_t)_{t\in \Z}$, define also $(\widetilde X_t)_{t\in\Z}$ satisfying the assumptions with $(\widetilde \xi_t)_{t\in \Z}$ instead of $( \xi_t)_{t\in \Z}$ and $\big ( \phi^{(j)}_\theta \big ( (\widetilde X_{t-k})_{k\geq 0} \big )\big )_{t\in \Z}$.
Then for $s\geq 0$, using \eqref{EE},
\begin{eqnarray*}\label{taucouplephi}
\tau_{\phi^{(j)}_\theta}^{(1)} (s) &\leq &\big \| \phi^{(j)}_\theta \big ( ( X_{s-k})_{k\geq 0} \big )- \phi^{(j)}_\theta \big ( (\widetilde X_{s-k})_{k\geq 0} \big )\big \|_1 \\
&\leq & C 
\, \Big (\|X_1-\widetilde X_1\|_4 \\
&& \qquad +\sum_{i=2}^\infty \big ( \alpha_i(f_\theta,\Theta)+\alpha_i(M_\theta,\Theta)+\alpha_i(\partial_{\theta_j}f_\theta ,\Theta)+\alpha_i(\partial_{\theta_j}M_\theta ,\Theta)\big ) \|X_i-\widetilde X_i\|_4 \Big ) \\
& \leq & C \, \sum_{\ell=1}^\infty \big ( \alpha_\ell(f_\theta,\Theta)+\alpha_\ell(M_\theta,\Theta)+\alpha_\ell(\partial_{\theta_j}f_\theta ,\Theta)+\alpha_\ell(\partial_{\theta_j}M_\theta ,\Theta)\big )\, \lambda_{s+1-\ell},
\end{eqnarray*}
that implies \eqref{tauphi}.
\end{proof}
\begin{remark}
Under Assumption {\bf A0} and {\bf A5}, and therefore with $\lambda_s=O\big (s^{1-\delta} \log s\big )$ with $\delta>7/2$, we also deduce that $\tau_{\phi^{(j)}_\theta}^{(1)} (s)  =O\big (s^{1-\delta} \log s\big )$. 
\end{remark}
\begin{proof}[Proof of Proposition \ref{theo0}]
If $Z$ is a $\tau_{Z}$-dependent centered stationary time series satisfying $\E[|Z_0|^\kappa ]<\infty$ with $\kappa>2$, and $\sum_{s=1}^\infty s^{1/(\kappa-2)} \tau_{Z}(s)<\infty$, we deduce from Lemma 2, point 2. of \cite{dd} that condition D$(2,\theta/2,X)$  is satisfied  as $\theta$-weakly dependent coefficients are smaller than $\tau$-weakly dependent coefficients, see (2.2.13) p.16 of \cite{DDLLLP}, and $0<\sum_{t\in \Z}\big | \E [Z_0Z_t] \big |<\infty$ from Proposition 2 of \cite{dd}. Then,
\begin{equation*}
 \frac 1 {\sqrt{n} } \, \sum_{t=1}^{n} Z_t   \limiteloin {\cal N} \Big ( 0 \, , \,  \sum_{t\in \Z} \E [Z_0Z_t] \Big ).
\end{equation*} 
We can apply this central limit theorem to 
$$
Z_t:=\sum_{j=1}^d c_j\big (\phi_\theta^{(j)}\big ((X_{t-k})_{k\geq 0} \big )-\E\big [\phi_\theta^{(j)}\big ((X_{t-k})_{k\geq 0} \big ) \big ]\big )\quad\mbox{with $(c_j)_{1\leq j\leq d} \in \R^d$}.
$$
Indeed, using Lemma \ref{lem00}, we easily obtain for $s\geq 0$
$$
\tau_{Z}(s)\leq  C \,\Big ( \sum_{j=1}^d |c_j| \Big ) \, \sum_{\ell=1}^\infty \big ( \alpha_\ell(f_\theta,\Theta)+\alpha_\ell(M_\theta,\Theta)+\alpha_\ell(\partial_{\theta_j}f_\theta ,\Theta)+\alpha_\ell(\partial_{\theta_j}M_\theta ,\Theta)\big )\, \lambda_{s+1-\ell},
$$
and therefore under Assumption {\bf A0} and {\bf A5}, $\tau_{Z}(s)= O\big (s^{1-\delta} \log s\big )$. 
Moreover, using Lemma \ref{lem:7}, we deduce $\E \big [|Z_0|^{8/3}\big ]<\infty$. Then with $\kappa=8/3$,  $\sum_{s=1}^\infty s^{1/(\kappa-2)} \tau_{Z}(s)=\sum_{s=1}^\infty s^{3/2} \,  \tau_{Z}(s)<\infty$ is satisfied since $\delta>7/2$. \\
Therefore, we deduce for any $\theta \in \Theta$,
\begin{multline*}
\sqrt{n} \,\sum_{j=1}^d c_j \, \Big (\frac 1 n \,  \partial _{\theta_j} L_n(\theta) + \frac 1 2 \, \E \big [\partial _{\theta_j} \gamma(\theta,X_0) \big ]   \Big ) \\ \limiteloin {\cal N} \Big ( 0 \, , \, \frac 1 4 \, \sum_{i=1}^d \sum_{j=1}^dc_i \,   c_j \sum_{t\in \Z} \cov \big ( \partial_{\theta_i} \gamma(\theta,X_0)\, , \, \partial_{\theta_j} \gamma(\theta,X_t)\big )\Big ),
\end{multline*} 
which implies the multidimensional central limit theorem \eqref{clt0}. 
\end{proof}
\begin{proof}[Proof of Corollary \ref{corol}]
Firstly, it was already established in \cite{bar2019}  that if $m^*\subset m$  then $\big (\partial_{\theta_i} \gamma(\theta_m,X_t) \big )_{t\in \Z}$ is a stationary martingale difference process with respect to ${\cal F}_t=\sigma\big ( (X_{t-k})_{k\in \N} \big )$. As a consequence $\cov \big ( \partial_{\theta_i} \gamma(\theta,X_0)\, , \, \partial_{\theta_j} \gamma(\theta,X_t)\big )=0$ if $t\neq 0$. \\
Secondly, for all $m \in {\cal M}$, from the definition of $\theta^*_m$ as a local minimum of $R$ on $\Theta_m$, and from Assumption {\bf A0-A5}, then $\partial_{\theta_j} R(\theta^*_m) =\E \big [\partial_{\theta_j} \gamma(\theta^*_m,X_0) \big ]=0$ for all $j\in m$.
\end{proof}
\begin{proof}[Proof of Theorem \ref{theomiss}]
We use here a standard proof, allowing to show the asymptotic normality of the QMLE and already used in \cite{barW}. \\
Firstly, it was established in \cite{bar2019} that $\widehat \theta_m \limiteasn \theta_m^*$. \\
Secondly, a Taylor-Lagrange expansion is applied to $\big ( \partial_{\theta_j} L_n(\widehat \theta_m) \big )_{j\in m}$ around $\theta^*_m$:
\begin{multline}\label{antheta}
\frac 1 {\sqrt n} \,  \big ( \partial_{\theta_j} L_n(\widehat \theta_m) \big )_{j\in m}=\frac 1 {\sqrt n} \,  \big ( \partial_{\theta_j} L_n(\theta^*_m) \big )_{j\in m} \\
+ \big ( \frac 1 n \, \partial^2_{\theta_i \theta_j} L_n(\overline \theta_m) \big )_{i,j\in m} \times \sqrt n \, \big ( (\widehat \theta_m)_i-(\theta^*_m)_i \big )_{i \in m} 
\end{multline}
with $\overline \theta_m=c\, \widehat \theta_m+ (1-c)\theta^*_m$ and $0<c<1$.\\
Using $\widehat \theta_m \limiteasn \theta_m^*$ and the ergodic theorem $\frac 1 n \, \big (\partial^2_{\theta_i \theta_j} L_n(\theta_m)\big)_{i,j \in m}\limiteasn F_m(\theta_m)$ for any $\theta_m \in \Theta_m$ since $\E \big [\big \| \partial^2 _{\theta^2} \gamma(\theta,X_0) \big \|_\Theta \big ]<\infty$ , we obtain:
\begin{equation}\label{L2F}
\Big ( \frac 1 n \, \partial^2_{\theta_i \theta_j} L_n(\overline \theta_m) \Big )_{i,j\in m} \limiteasn F_m(\theta^*_m).
\end{equation}
Finally, by definition of $\widehat \theta_m$, $\partial_{\theta_j}\widehat L_n(\widehat \theta_m)=0$ for any $j\in m$. As a consequence, 
\begin{equation}\label{diffL}
\frac 1 {\sqrt n} \,  \big ( \partial_{\theta_j} L_n(\widehat \theta_m) \big )_{j\in m} \limiteproban 0,
\end{equation}
using a Markov Inequality and $\E \big [ \frac 1 {\sqrt n} \, \big \| \partial_{\theta}\widehat L_n(\theta)-\partial_{\theta}L_n(\theta) \|_{\Theta} \big ]\limiten 0$ established in (5.11) of \cite{barW}. Considering \eqref{antheta}, \eqref{L2F} and \eqref{diffL}, and with the central limit theorem satisfied by $\frac 1 {\sqrt n} \,  \big ( \partial_{\theta_j} L_n(\theta^*_m) \big )_{j\in m} $ provided in Corollary \ref{corol}, this achieves the proof.
\end{proof}
\noindent Now, before establishing Proposition \ref{2+}, three technical lemmas can be stated:
\begin{lemma}\label{lem:nv2}
Under Assumptions {\bf A0}-{\bf A5}, with $8/3<r'\leq r/3$ and $r'<2(\delta-1)$ where $\delta>7/2$ is given in Assumption {\bf A5}, for any $m\in {\cal M}$, there exists $C>0$ such as for any $n\in \N^*$
	\begin{equation} \label{L4}
	\Big\| \Big (\frac{1}{\sqrt n}\, \partial_{\theta_j} L_n ( \theta^*_m) \Big )_{j\in m}\Big\|_{r'} \leq C.
	\end{equation}
\end{lemma}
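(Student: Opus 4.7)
The strategy is to recognise $\frac{1}{\sqrt n}\partial_{\theta_j}L_n(\theta^*_m)$ as a normalized partial sum of a stationary, centered, $\tau$-weakly dependent sequence, and then invoke a Rosenthal-type moment inequality from \cite{dd,DDLLLP}.

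First, write
\[
\frac{1}{\sqrt n}\,\partial_{\theta_j}L_n(\theta^*_m) = -\frac{1}{2\sqrt n}\sum_{t=1}^n Z_t,\qquad Z_t:=\phi_{\theta^*_m}^{(j)}\big((X_{t-k})_{k\geq 0}\big).
\]
For $j\in m$, since $\theta^*_m$ is an interior minimiser of $R$ on $\Theta_m$ (which follows from Assumption {\bf A0}-{\bf A5} and the identifiability of the family restricted to $\Theta_m$), one has $\E[Z_0]=\E[\partial_{\theta_j}\gamma(\theta^*_m,X_0)]=0$. Thus $(Z_t)_{t\in\Z}$ is a stationary centered sequence, and it suffices to show $\|\sum_{t=1}^n Z_t\|_{r'}=O(\sqrt n)$.

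The second step is to establish the two structural properties needed for Rosenthal's inequality for weakly dependent sequences. (a) \textbf{Moment bound:} using the explicit expression of $\phi^{(j)}_\theta$ given before Lemma \ref{lem00}, Assumption {\bf A3} ($H_\theta\geq \underline h$), the Lipschitz controls on $f_\theta,M_\theta,\partial_\theta f_\theta,\partial_\theta M_\theta$ from Assumption {\bf A5}, and the representation $X_t-f^t_{\theta^*_m} = M^t_{\theta^*}\xi_t + (f^t_{\theta^*}-f^t_{\theta^*_m})$, one sees that $Z_0$ is a polynomial of degree at most $3$ in variables whose $r$-th moment is finite (products like $\partial_{\theta_j}M_\theta\cdot X_{t-k}\cdot X_{t-\ell}$). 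The constraint $r'\leq r/3$ is precisely what is needed to guarantee $\|Z_0\|_{r'}<\infty$ by a direct Hölder argument. (b) \textbf{Weak dependence:} Lemma \ref{lem00} combined with Assumption {\bf A5} (decay $\alpha_k=O(k^{-\delta})$) and the remark that $\lambda_s=O(s^{1-\delta}\log s)$ yields
\[
\tau_Z(s)\le C\,s^{1-\delta}\log s.
\]

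The third step is to apply a Rosenthal-type inequality for $\tau$-weakly dependent stationary sequences (see, e.g., \cite{dd} or Chapter~6 of \cite{DDLLLP}), which in our setting yields
\[
\Big\|\sum_{t=1}^n Z_t\Big\|_{r'}^{r'} \le C_{r'}\,n^{r'/2}\Big(\sum_{t\in\Z}|\mathrm{Cov}(Z_0,Z_t)|\Big)^{r'/2} + C_{r'}\, n\,\E\big[|Z_0|^{r'}\big],
\]
provided $\sum_{s\geq 1}s^{r'-2}\tau_Z(s)<\infty$. Under the standing assumption $r'<2(\delta-1)$ this series converges since $(r'-2)+(1-\delta)<-1$, and under $r'\leq r/3$ the second factor is finite by step (a). Dominating the right-hand side by $C\,n^{r'/2}$ (the first term is the leading order when $r'>2$) and dividing by $\sqrt n$ yields \eqref{L4}.

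The main obstacle is matching the numerology: one has to verify carefully that the decay rate $\tau_Z(s)=O(s^{1-\delta}\log s)$ together with the moment condition $r'\leq r/3$ fits the hypotheses of the chosen Rosenthal-type inequality, and that the cubic-in-$X$ structure of $\phi^{(j)}_{\theta^*_m}$ is genuinely controlled by $\|X_0\|_r$ (rather than requiring a higher moment) via the lower bound $\underline h$ on $H_\theta$. Once these verifications are in place, the conclusion $\|n^{-1/2}\sum_{t=1}^n Z_t\|_{r'}\le C$ is immediate and uniform in $n$.
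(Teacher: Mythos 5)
Your overall strategy is exactly the paper's: write $\partial_{\theta_j}L_n(\theta^*_m)$ as $-\tfrac12\sum_{t=1}^n\partial_{\theta_j}\gamma(\theta^*_m,X_t)$, observe that this is a centered stationary sequence (first-order condition at $\theta^*_m$), control its $\tau$-dependence coefficients via Lemma \ref{lem00} ($\tau(s)=O(s^{1-\delta}\log s)$) and its $r'$-moment via the constraint $r'\le r/3$ (the paper's Lemma \ref{lem:7}), and then apply a Rosenthal-type inequality for $\tau$-weakly dependent sequences. Steps (a) and (b) are fine.

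However, there is a genuine gap in the numerology of your third step. You invoke a Rosenthal inequality valid \emph{provided} $\sum_{s\ge1}s^{r'-2}\tau_Z(s)<\infty$, and you claim this holds under $r'<2(\delta-1)$ because $(r'-2)+(1-\delta)<-1$. But $(r'-2)+(1-\delta)<-1$ is equivalent to $r'<\delta$, whereas the lemma's hypothesis only gives $r'<2(\delta-1)$, and $2(\delta-1)>\delta$ whenever $\delta>2$ (here $\delta>7/2$). So for admissible values $r'\in[\delta,\,2(\delta-1))$ (a nonempty range as soon as $r\ge 3\delta$, e.g.\ $\delta$ close to $7/2$ and $r\ge 11$), your summability condition fails and the inequality you cite is not applicable. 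The paper avoids this by using Proposition 5.5 of \cite{DDLLLP} in its non-summable form,
\begin{equation*}
\E\Big[\Big|\sum_{t=1}^n \partial_{\theta_j}\gamma(\theta^*_m,X_t)\Big|^{r'}\Big]\le C_{r'}\big(M_{r',n}+M_{2,n}^{r'/2}\big),\qquad M_{q,n}:=2n\sum_{i=0}^{n-1}(i+1)^{q-2}\,\tau(i),
\end{equation*}
which does not require $\sum_s s^{r'-2}\tau(s)<\infty$: one only needs $M_{r',n}=O(n^{r'/2})$. Since $M_{r',n}\le Cn\sum_{i\le n}i^{r'-1-\delta}\log i=O(n^{1+r'-\delta}\log n)$, the bound $M_{r',n}=O(n^{r'/2})$ holds precisely when $1+r'-\delta<r'/2$, i.e.\ $r'<2(\delta-1)$ --- this is exactly where the lemma's hypothesis comes from. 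Your proof is therefore valid only on the sub-range $r'<\delta$; to cover the full range you should replace your version of the inequality by the partial-sum form above and redo the elementary estimate of $M_{r',n}$.
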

\begin{proof}
First, for any $m\in {\cal M}$ and $n \in \N^*$,
\begin{eqnarray}
\nonumber \big\| \big ( \partial_{\theta_j} L_n ( \theta^*_m)\big )_{j\in m} \big\|^{r'} &\leq & |m|^{r'/2-1} \, \sum_{j\in m}  \big |
\partial_{\theta_j} L_n ( \theta^*_m) \big |^{r'} \\
\label{inegg}&\leq & \frac {|m|^{r'/2-1}}{2^{r'}} \, \sum_{j\in m} \Big |\sum_{t=1}^n \partial_{\theta_j}\gamma(\theta^*_m, X_t)\Big |^{r'}.
\end{eqnarray}
Now, for all $j \in m$, $\big (\partial_{\theta_j}\gamma(\theta^*_m, X_t)\big)_{t\in \Z}$ is a centered (from the proof of Corollary \ref{corol}) stationary  $\tau_{\phi^{(j)}_\theta}^{(p)}$-weakly dependent where its coefficients $\big (\tau_{\phi^{(j)}_\theta}^{(1)}(s)\big )_s$ satisfies   \eqref{tauphi} (see Lemma \ref{lem00}). Moreover, from the proof of Proposition \ref{theo0}, $\tau_{\phi^{(j)}_\theta}^{(1)}(s)= O\big (s^{1-\delta} \log s\big )$. \\
In Proposition 5.5 of \cite{DDLLLP}, since $\E \big [\big |\partial_{\theta_j}\gamma(\theta^*_m, X_0)\big|^{r'}\big ]<\infty$ from Lemma \ref{lem:7}, it has been established that:
\begin{multline*}
\E \Big [ \Big |\sum_{t=1}^n \partial_{\theta_j}\gamma(\theta^*_m, X_t)\Big |^{r'} \Big ] \leq C_{r'} \, \big (M_{{r'},n}+M_{2,n}^{r'/2} \big )\\ \quad\mbox{where}\quad M_{m,n}:=2n\, \sum_{i=0}^{n-1} (i+1)^{m-2}\,  \tau_{\phi^{(j)}_\theta}^{(1)}(i).
\end{multline*}
Using $8/3<r'< 2(\delta-1)$ with $\delta>7/2$, we obtain that 
$$
M_{{r'},n} \leq C \, n  \sum_{i=1}^n i^{r'-1-\delta} \log(i)\leq C'\, n^{1+r'-\delta} \log(n)=O\big (n^{r'/2} \big)
$$ 
and $M_{2,n} \leq C \,n \sum_{i=1}^n i^{1-\delta} \log(i)\leq C''\, n$. As a consequence, there exists $C>0$ such as for any $n \in \N^*$, 
\begin{equation}\label{prieur}
\E \Big [ \Big |\sum_{t=1}^n \partial_{\theta_j}\gamma(\theta^*_m, X_t)\Big |^{r'} \Big ] \leq C \, n^{r'/2}.
\end{equation}
Then, using \eqref{inegg} and \eqref{prieur}, the proof is established. 
\end{proof}
\begin{lemma} \label{lem:nv1}
Under Assumptions {\bf A0}-{\bf A5},
then for any $m \in \mathcal{M}$, there exists $C>0$ such as for any $n \in \N^*$,
	\[
\Big\| \Big (\frac{1}{\sqrt n}\,\partial_{\theta_j} L_n (\widehat \theta_m)\Big )_{j\in m} \Big\|_{r/3} \leq C.
	\]
\end{lemma}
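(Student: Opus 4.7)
The key observation is that $\widehat{\theta}_m$ is an interior critical point of $\widehat{L}_n$, so $\partial_{\theta_j}\widehat{L}_n(\widehat{\theta}_m)=0$ for all $j\in m$. This lets us rewrite
\[
\frac{1}{\sqrt n}\partial_{\theta_j}L_n(\widehat{\theta}_m)=\frac{1}{\sqrt n}\bigl(\partial_{\theta_j}L_n(\widehat{\theta}_m)-\partial_{\theta_j}\widehat{L}_n(\widehat{\theta}_m)\bigr),
\]
so the quantity to control is really the difference between the true log-likelihood score and its approximation, evaluated at $\widehat{\theta}_m$. The plan is to bound this by the supremum over $\Theta_m$, which eliminates the random argument and reduces the problem to controlling a deterministic sum of moments.

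First I would use the bound
\[
\Bigl|\frac{1}{\sqrt n}\partial_{\theta_j}L_n(\widehat{\theta}_m)\Bigr|\le \frac{1}{2\sqrt n}\sum_{t=1}^n \sup_{\theta\in\Theta_m}\bigl|\partial_{\theta_j}\gamma(\theta,X_t)-\partial_{\theta_j}\widehat{\gamma}(\theta,X_t)\bigr|,
\]
and then apply Minkowski's inequality (valid since $r/3>8/3>1$) to pull the $\|\cdot\|_{r/3}$ norm inside the sum. This yields
\[
\Bigl\|\frac{1}{\sqrt n}\partial_{\theta_j}L_n(\widehat{\theta}_m)\Bigr\|_{r/3}\le \frac{1}{2\sqrt n}\sum_{t=1}^n \bigl\|\sup_{\theta\in\Theta_m}|\partial_{\theta_j}\gamma(\theta,X_t)-\partial_{\theta_j}\widehat{\gamma}(\theta,X_t)|\bigr\|_{r/3}.
\]

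Next, each summand measures the effect of truncating the infinite past $(X_{t-k})_{k\ge 1}$ to $(X_{t-1},\dots,X_1,0,\dots)$. Because $\partial_{\theta_j}\gamma(\theta,X_t)$ is a smooth function of $f_\theta^t,M_\theta^t,\partial_{\theta_j}f_\theta^t,\partial_{\theta_j}M_\theta^t$ and the corresponding quantities with hats, Assumption \textbf{A}$(\Psi_\theta,\Theta)$ applied to each of these (i.e.\ Assumption \textbf{A5}) together with Assumption \textbf{A3} and $\|\xi_0\|_r<\infty$ (so that $X_t$ has $r$-th moments by Assumption \textbf{A0}) yields via Hölder's inequality applied to the product structure a bound of the form
\[
\bigl\|\sup_{\theta\in\Theta_m}|\partial_{\theta_j}\gamma(\theta,X_t)-\partial_{\theta_j}\widehat{\gamma}(\theta,X_t)|\bigr\|_{r/3}\le C\sum_{k\ge t}\bigl(\alpha_k(f_\theta,\Theta)+\alpha_k(M_\theta,\Theta)+\alpha_k(\partial_\theta f_\theta,\Theta)+\alpha_k(\partial_\theta M_\theta,\Theta)\bigr).
\]
The exponent $r/3$ is exactly what allows Hölder to pass through: the derivatives of $\gamma$ involve products of up to three factors each controlled in $L^r$. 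This step is essentially the moment version of the bounds already used for the almost-sure convergence $\E[\|\partial_\theta\widehat{L}_n(\theta)-\partial_\theta L_n(\theta)\|_\Theta]/\sqrt n\to 0$ from (5.11) of \cite{barW}, and is the main technical obstacle: one must carefully track the truncation error through the derivatives of $\gamma$ and apply Hölder with exponents summing to $1/(r/3)$.

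Finally, under Assumption \textbf{A5} one has $\alpha_k(\cdot)=O(k^{-\delta})$ with $\delta>7/2$, so $\sum_{k\ge t}\alpha_k=O(t^{1-\delta})$ with $1-\delta<-5/2$, and therefore
\[
\frac{1}{\sqrt n}\sum_{t=1}^n \bigl\|\sup_\theta|\partial_{\theta_j}\gamma(\theta,X_t)-\partial_{\theta_j}\widehat{\gamma}(\theta,X_t)|\bigr\|_{r/3}\le \frac{C}{\sqrt n}\sum_{t=1}^n t^{1-\delta}\le \frac{C'}{\sqrt n},
\]
which is bounded (indeed $o(1)$). Summing over $j\in m$ with a trivial $|m|^{1/2}$ factor, as in Lemma \ref{lem:nv2}, gives the stated bound $C$ for the Euclidean norm in $L^{r/3}$.
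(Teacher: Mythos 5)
Your proposal is correct and follows essentially the same route as the paper: both exploit $\partial_{\theta_j}\widehat L_n(\widehat\theta_m)=0$ to reduce the problem to the truncation error $\partial_{\theta_j}L_n-\partial_{\theta_j}\widehat L_n$, bound it uniformly over $\Theta$ via Minkowski and the $L^{r/3}$ moment bound on $\sup_\theta|\partial_{\theta_j}\gamma(\theta,X_t)-\partial_{\theta_j}\widehat\gamma(\theta,X_t)|$ (which the paper imports from Lemma 2 of \cite{bar2019}), and conclude with $\sum_{t\le n}t^{1-\delta}=O(1)$ for $\delta>7/2$.
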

\begin{proof}
First, from the definition of $\widehat \theta_m$, we have $\partial_{\theta_j} \widehat L_n (\widehat \theta_m)=0$ for any $j\in m$. Then,
\begin{eqnarray*}
\Big\| \Big (\frac{1}{\sqrt n}\,\partial_{\theta_j} L_n (\widehat \theta_m)\Big )_{j\in m}\Big\|_{r/3}&=&	\Big\| \Big (\frac{1}{\sqrt n} \big (\partial_{\theta_j} L_n (\widehat \theta_m)-\partial_{\theta_j} \widehat L_n (\widehat \theta_m)\big) \Big )_{j\in m}\Big\|_{r/3}  \\
&\leq&   \frac{|m|^{(r-6)/2r}}{\sqrt n} \, \sum_{j\in m}\Big\| \partial_{\theta_j} L_n (\widehat \theta_m)-\partial_{\theta_j} \widehat L_n (\widehat \theta_m)\Big\|_{r/3} \\
&\leq &  \frac{|m|^{1/2}}{2 \,\sqrt n} \, \sum_{j\in m}\sum_{t=1}^n  \Big\| \partial_{\theta_j}  \gamma(\widehat \theta_m, X_{t})-\partial_{\theta_j} \widehat \gamma(\widehat \theta_m, X_{t})\Big\|_{r/3}.
\end{eqnarray*}
From the proof of Lemma 2 in \cite{bar2019}, there exists $C>0$ such as
	\begin{multline*}
\E \Big [ \sup_{\theta \in \Theta} \Big\| \partial_{\theta_j}  \gamma(\theta, X_{t})-\partial_{\theta_j} \widehat \gamma(\theta, X_{t})\Big\|^{r/3} \Big ] \\
  \le C\, \Big(\sum_{k \ge t} \alpha_k (f_\theta,\Theta)+ \alpha_k (M_\theta,\Theta)+  \alpha_k (\partial  f_\theta,\Theta)+ \alpha_k (\partial  M_\theta,\Theta)\Big)^{r/3}.
	\end{multline*}
Therefore,
\begin{eqnarray*}
\Big\| \Big (\frac{1}{\sqrt n}\,\partial_{\theta_j} L_n (\widehat \theta_m)  \Big )_{j\in m}\Big\|_{r/3} &\le& C\, \frac{|m|^{3/2}}{2 \,\sqrt n} \,  \sum_{t=1}^n \sum_{k \ge t} \big ( \alpha_k (f_\theta,\Theta)+ \alpha_k (M_\theta,\Theta)\\
& & \hspace{4cm} + \alpha_k (\partial  f_\theta,\Theta)+ \alpha_k (\partial  M_\theta,\Theta)\big)\\
&\le &\frac {C'}{\sqrt n} \,  \sum_{t=1}^n \sum_{j \ge t} j^{-\delta} \leq \frac {C''}{\sqrt n} \, \sum_{t=1}^n  t^{1-\delta} \leq C''',
\end{eqnarray*}
with $C'>0$, $C''>0$ and $C'''>0$ and where the last inequality holds since $\delta>7/2$ under Assumption {\bf A5}.
\end{proof}

\begin{lemma}\label{lem:7}
Under Assumptions {\bf A0}-{\bf A5},	for any $m\in {\cal M}$ and any $\theta \in \Theta_m$,
	\begin{equation}\label{eq:in}
	\Big \|\big (\partial_{\theta_j}  \gamma(\theta, X_{0}) \big )_{j\in m}\Big \|_{r/3} < \infty.
	\end{equation}
\end{lemma}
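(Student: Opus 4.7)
The plan is to bound $\|\partial_{\theta_j}\gamma(\theta,X_0)\|_{r/3}$ directly via the explicit representation
\[
\partial_{\theta_j}\gamma(\theta,X_0) = -2\,\partial_{\theta_j} M_\theta^0\,\frac{(X_0-f_\theta^0)^2}{(M_\theta^0)^3} - 2\,\partial_{\theta_j} f_\theta^0\,\frac{X_0-f_\theta^0}{(M_\theta^0)^2} + 2\,\frac{\partial_{\theta_j} M_\theta^0}{M_\theta^0},
\]
and a triangle inequality will reduce the lemma to showing that each of these three terms lies in $L^{r/3}$.

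First, I would use Assumption \textbf{A3} to absorb the factors $1/M_\theta^0$, $1/(M_\theta^0)^2$, $1/(M_\theta^0)^3$, which are all bounded by fixed powers of $\underline{h}^{-1/2}$. This reduces the problem to showing $L^{r/3}$-integrability of the numerators.

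Next, I would establish the auxiliary bound that $f_\theta^0, M_\theta^0, \partial_{\theta_j} f_\theta^0$ and $\partial_{\theta_j} M_\theta^0$ all belong to $L^r$. Under Assumption \textbf{A}$(\Psi_\theta,\Theta)$ one has $|\Psi_\theta^0| \le \|\Psi_\theta(0)\|_\Theta + \sum_{k\ge 1}\alpha_k(\Psi_\theta,\Theta)\,|X_{-k}|$; Minkowski's inequality combined with stationarity, $\|X_0\|_r<\infty$ (guaranteed by \textbf{A0}), and the summability of $\alpha_k(\Psi_\theta,\Theta)$ supplied by \textbf{A5} yields the $L^r$ control uniformly on $\Theta$. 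In particular $X_0-f_\theta^0 \in L^r$ as well.

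Finally, I would apply Hölder's inequality term by term. The third term is immediately in $L^r\subset L^{r/3}$. The second term is a product of two $L^r$ random variables (modulo a bounded factor), hence lies in $L^{r/2}\subset L^{r/3}$. The first term is the delicate one: it is the product of $\partial_{\theta_j} M_\theta^0 \in L^r$ with $(X_0-f_\theta^0)^2 \in L^{r/2}$; Hölder with exponents adding up as $\tfrac{1}{r}+\tfrac{2}{r}=\tfrac{3}{r}$ places this product in $L^{r/3}$, which is the worst exponent and exactly the one claimed. The main (mild) obstacle is really the bookkeeping for this first term, since the cubic-type nonlinearity forces the Hölder exponent $r/3$ and is what makes the condition $r>8$ from \textbf{A0} natural (it entails $r/3>8/3$, matching Lemma \ref{lem:nv2}). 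Once these three bounds are collected, the triangle inequality gives $\|\partial_{\theta_j}\gamma(\theta,X_0)\|_{r/3}<\infty$, and summing over $j\in m$ yields the vector-norm bound \eqref{eq:in}.
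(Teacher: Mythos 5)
Your proposal is correct and follows essentially the same route as the paper: the explicit three-term formula for $\partial_{\theta_j}\gamma(\theta,X_0)$, Assumption \textbf{A3} to control the powers of $(M_\theta^0)^{-1}$, and then Minkowski plus Hölder with exponents summing to $3/r$ on the worst (cubic) term. The only cosmetic difference is that the paper delegates the finiteness of the individual moments $\|\partial_{\theta_j}f_\theta^0\|_{2r/3}$, $\|X_0-f_\theta^0\|_r$, etc.\ to Lemma 1 of \cite{barW}, whereas you derive the $L^r$ control of $f_\theta^0$, $M_\theta^0$ and their derivatives directly from Assumption \textbf{A}$(\Psi_\theta,\Theta)$ and stationarity, which is a perfectly valid (and self-contained) way to close that step.
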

\begin{proof}
For $j \in m$, we have for any $\theta \in \Theta_m$,
$$
\partial_{\theta_j}  \gamma(\theta, X_{0})=-2\, (M_{\theta}^0)^{-2}(X_0-f_{\theta}^0)\,\partial_{\theta_j} f_{\theta}^0-2\, (M_{\theta}^0)^{-3}(X_0-f_{\theta}^0)^2\partial_{\theta_j}  M_{\theta}^0+2\,(M_{\theta}^0)^{-1}\partial_{\theta_j}  M_{\theta}^0.
$$
Therefore, with Assumption {\bf A3} and Minkowski Inequality,
	\begin{multline*}
	\big \|\big (\partial_{\theta_j}  \gamma(\theta, X_{0}) \big )_{j\in m}\big \|_{r/3}
	\le \frac {2 }{\underline h^{3/2}}\, \Big (  \underline h^{1/2}\, \big \| \big(\partial_{\theta_j} f_{\theta}^0\big), \big (X_0-f_{\theta}^0\big) \big \|_{r/3} \\
	+ \big \| \big(\partial_{\theta_j} M_{\theta}^0\big) \, \big |X_0-f_{\theta}^0\big|^{2} \big \|_{r/3}+ \underline h\, \big \|  \big(\partial_{\theta_p}  M_{\theta}^0\big) \big \|_{r/3} \Big ).
	\end{multline*}
Now, applying the Hölder Inequality, we obtain that there exists $C>0$ such that for any $\theta\in \Theta_m$,
\begin{multline}\label{momentr}
\E\Big[\big| \partial_{\theta_p}  \gamma(\theta^*, X_{0}) \big|^{r/3}\Big] \le C \, \Big (   \big \| \partial_{\theta_j} f_{\theta}^0\big \|_{2r/3}\, \big \|X_0-f_{\theta}^0 \big \|_{2r/3} \\
	+ \big \| \partial_{\theta_j} M_{\theta}^0\big \|_{r} \, \big \|X_0-f_{\theta}^0 \big \|^2_{r}+ \big \| \partial_{\theta_p}  M_{\theta}^0 \big \|_{r/3} \Big ).
\end{multline}
Using Assumption {\bf A0} and {\bf A5} and the proof of Lemma 1 in \cite{barW}, all the right side terms in \eqref{momentr} are finite for any $\theta \in \Theta_m$ and this achieves the proof.
\end{proof}
\noindent Then Proposition \ref{2+} can be established:
\begin{proof}[Proof of Proposition \ref{2+}]
From \eqref{antheta} and \eqref{L2F} with $F(\theta^*_m)$ the positive definite matrix defined in \eqref{matrixF}, we know that for $n$ large enough,
\begin{multline}\label{antheta2}
\big \| \sqrt n \, \big ( (\widehat \theta_m)_i-(\theta^*_m)_i \big )_{i \in m}  \big \|_{r'} \\
=\Big \| \Big (\big ( \frac 1 n \, \partial^2_{\theta_i \theta_j} L_n(\overline \theta_m) \big )_{i,j\in m}\Big )^{-1} \times \frac 1 {\sqrt n} \,  \big ( \partial_{\theta_j} L_n(\widehat \theta_m)-\partial_{\theta_j} L_n(\theta^*_m)\big )_{j\in m} \Big \|_{r'}.
\end{multline}
Therefore, using Hölder and Minkowski inequalities, we obtain:
\begin{eqnarray*}
\big \| \sqrt n \, \big ( (\widehat \theta_m)_i-(\theta^*_m)_i \big )_{i \in m}  \big \|_{r'} & \leq & 
\Big \| \Big (\big ( \frac 1 n \, \partial^2_{\theta_i \theta_j} L_n(\overline \theta_m) \big )_{i,j\in m}\Big )^{-1}\Big \|_{\frac {rr'}{r-3r'} } \\
&& \hspace{2cm} \times \Big \|\frac 1 {\sqrt n} \,  \big ( \partial_{\theta_j} L_n(\widehat \theta_m)-\partial_{\theta_j} L_n(\theta^*_m)\big )_{j\in m} \Big \|_{\frac r 3} \\
& \leq & \Big \| \Big (\big ( \frac 1 n \, \partial^2_{\theta_i \theta_j} L_n(\overline \theta_m) \big )_{i,j\in m}\Big )^{-1}\Big \|_{\frac {rr'}{r-3r'} } \\
&& \hspace{0.1cm} \times \Big (\Big \|\frac 1 {\sqrt n} \,  \big ( \partial_{\theta_j} L_n(\widehat \theta_m)\big )_{j\in m}\Big \|_{\frac r 3} + \Big \|\frac 1 {\sqrt n} \,  \big ( \partial_{\theta_j} L_n(\theta^*_m)\big )_{j\in m} \Big \|_{\frac r 3} \Big ).
\end{eqnarray*}
Now using Assumption {\bf A4}, Lemmas \ref{lem:nv2} and \ref{lem:nv1}, we deduce \eqref{ineg2+}. 
\end{proof}
\subsection{Proofs of Section \ref{sec:resul}}
\begin{proof}[Proof of Lemma \ref{prop1}]
{\bf 1.} From the assumptions, the function $R:~\theta \in \Theta \mapsto R(\theta)$ is a ${\cal C}^2(\Theta)$ function and the Hessian matrix $\partial _{\theta^2} R=-2 \, F$ is a definite positive matrix (see \eqref{matrixF}). Therefore, from a Taylor-Lagrange expansion:
\begin{eqnarray}
\nonumber n \big ( R(\widehat{\theta}_{m})-R(\theta^*_m) \big ) &=& n \Big ( R(\theta^*_m)+\big (\widehat{\theta}_m -\theta^*_m\big )^\top \,\partial _{\theta} R(\theta^*_m) \\
\nonumber&& \hspace{3cm} +\frac{1}{2} \, \big(\widehat{\theta}_m-\theta^*_m\big)^\top\,\partial _{\theta^2} R(\overline \theta) \, \big(\widehat{\theta}_m-\theta^*_m\big)-R(\theta^*_m) \Big ) \\
\label{taylor}&= & \frac{1}{2}\, \big(\sqrt n (\widehat{\theta}_m-\theta^*_m)\big)^\top\,\partial _{\theta^2} R(\overline \theta) \, \big(\sqrt n (\widehat{\theta}_m-\theta^*_m)\big),
\end{eqnarray} 
with $\overline \theta= { \theta^*_m } + c\, \big (\widehat{\theta}_m-\theta^*_m\big )\in \Theta_m$ since $c\in [0,1]$. Using Lemma 4 of \cite{barW} and continuous mapping Theorem, we deduce that:
\begin{equation}
\partial _{\theta^2} R(\overline \theta)=-2 \, F(\overline \theta)\limiteproban -2 \, F(\theta^*_m)\quad \mbox{and}\quad G(\overline \theta)\limiteproban G(\theta^*_m).
\label{eq:oli1}
\end{equation}
Moreover, using the asymptotic normality of $\widehat{\theta}_m$ established in \cite{barW}  and \cite{bar2019}, we have:
\begin{equation}
\sqrt n ((\widehat{\theta}_m)_i-(\theta^*_m)_i)_{i\in m} \limiteloin {\cal N} \Big ( 0 \, , \ \big (F_m(\theta^*_m)\big )^{-1} G_m(\theta^*_m)\big (F_m(\theta^*_m)\big )^{-1} \Big ).
\label{clt1}
\end{equation}
As a consequence, with $Z_n= \big (G_m(\overline \theta)\big )^{-1/2} \,F_m(\overline \theta) \sqrt n ((\widehat{\theta}_m)_i-(\theta^*_m)_i)_{i\in m}\limiteloin {\cal N }(0\, , \, I_{|m|})$ and from \eqref{taylor}, we have
\begin{eqnarray*}
n \big ( R(\widehat{\theta}_{m})-R(\theta^*_m) \big ) &=&- Z_n^\top\,\big (G_m(\overline \theta)\big )^{1/2} \big (F_m(\overline \theta)\big )^{-1}  F_m(\overline \theta) \big (F_m(\overline \theta)\big )^{-1}\big (G_m(\overline \theta)\big )^{1/2}\, Z_n \\
 &=&- Z_n^\top\,\big (G_m(\overline \theta)\big )^{1/2}  \big (F_m(\overline \theta)\big )^{-1} \big (G_m(\overline \theta)\big )^{1/2}\, Z_n.
\end{eqnarray*} 
Define $\displaystyle U^*(m):=- Z^\top\,\big (G_m(\theta^*_m)\big )^{1/2}  \big (F_m(\theta^*_m)\big )^{-1} \big (G_m(\theta^*_m)\big )^{1/2}\, Z$ where $Z \egaleloi {\cal N }(0\, , \,  I_{|m|})$. Then using \eqref{eq:oli1} we obtain
$$
n \big ( R(\widehat{\theta}_{m})-R(\theta^*_m)   \big ) \limiteloin U^*(m).
$$
The computation of the expectation of $U^*_m$ follows from 
\begin{eqnarray*} 
\E \big [ U^*_m\big ] & =& \E \big [\mbox{Trace}\big(U^*_m \big ) \big ]=-\mbox{Trace}\Big (\big (G_m(\theta^*_m)\big )^{1/2}  \big (F_m(\theta^*_m)\big )^{-1} \big (G_m(\theta^*_m)\big )^{1/2} \Big )\\
&= &- \mbox{Trace}\Big( \big (F_m(\theta^*_m)\big )^{-1}  \,G_m(\theta^*_m)\Big ). 
\end{eqnarray*} 
Finally, for establishing $\E \big [n  \big (R(\widehat{\theta}_{m})-R(\theta^*_m)   \big ) \big ] \limiten \E \big [ U^*_m\big ] $, we have to prove that there exists $n_0\in \N$ such as
\begin{equation}\label{espoir}
\sup_{n \geq n_0} \E \big [n \big | R(\widehat{\theta}_{m})-R(\theta^*_m)   \big | \big ]<\infty.
\end{equation}
Indeed, from \eqref{taylor}, we have:
\begin{multline}
n \, \big | R(\widehat{\theta}_{m})-R(\theta^*_m)   \big |\leq  \frac{1}{2}\, \sup_{\theta \in \Theta} \big \| \partial^2 _{\theta^2} R(\theta) \big \|  \,  \big \| \sqrt n (\widehat{\theta}_m-\theta^*_m)\big \| ^2   \\
\Longrightarrow \quad \E \big [n \big | R(\widehat{\theta}_{m})-R(\theta^*_m)  \big | \big ] \leq \frac{\lambda_{\max}}{2}\, \E \big [  \big \| \sqrt n (\widehat{\theta}_m-\theta^*_m)\big \| ^2 \big ],
\label{ineg1}
\end{multline}
since there exists $\lambda_{\max}<\infty$ such as $\big \| \partial _{\theta^2} R(\theta) \big \| \leq \lambda_{\max}$ for any $\theta \in \Theta$ from Assumption {\bf A5} where $\Theta$ is a compact set.\\
Using Proposition \ref{2+}, we know that
\begin{equation*}\label{theta2}
\sup_{n\in \N^*} \big \|  \sqrt n \, \big (\widehat \theta _m-\theta^*_m\big ) \big \|_2 <\infty.
\end{equation*}
Finally using \eqref{ineg1}, we deduce \eqref{espoir}.\\
~\\
{\bf 2.} As in the proof of {\bf 1.}, we use a Taylor-Lagrange expansion of $\widehat \gamma_n(\theta^*_m)$ around $\widehat \theta_m$ since $\partial_\theta \widehat \gamma_n(\widehat \theta_m)=0$. Then, 
\begin{equation*}
n\, \big(\widehat \gamma_n(\theta^*_m)-\widehat \gamma_n(\widehat{\theta}_m) \big)=\frac 1 2 \, \sqrt n (\widehat{\theta}_{m}-\theta^*_m)^\top\, \big( \partial^2_{\theta^2} \widehat \gamma_n(\overline \theta_m) \big )\, \sqrt n\big(\widehat{\theta}_{m}-\theta^*_m\big).
\end{equation*}
But using $\widehat \theta_m \limiteasn \theta^*_m$ and $\displaystyle \E \Big [ \Big \| \frac 1 n \, L_n(\theta)- \frac 1 n \, \widehat L_n(\theta) \Big \|_\Theta \Big ]\limiten 0$, we have 
$$
\big( \partial^2_{\theta^2} \widehat \gamma_n(\overline \theta_m) \big ) \limiteproban -2 \, F(\theta^*_m).
$$ 
Therefore, using the same reasoning as in {\bf 1.}, we deduce that 
$$
n\, \big ( \widehat \gamma_n(\theta^*_m)-\widehat \gamma_n(\widehat{\theta}_m) \big ) \limiteloin U^*(m).
$$
With H\"older Inequality and using $8/3<r'$ defined in Proposition \ref{2+}, we obtain for $n$ large enough
\begin{eqnarray}
\nonumber \E \big [n\, \big ( \widehat \gamma_n(\theta^*_m)-\widehat \gamma_n(\widehat{\theta}_m) \big )\big ] &\le& \big \|\big( \partial^2_{\theta^2} \widehat \gamma_n(\overline \theta_m) \big )^{1/2}\,  \sqrt n (\widehat{\theta}_{m}-\theta^*_m) \big \|^2_2  \\
\label{gamma} &\leq &  \Big \| \big( \partial^2_{\theta^2} \widehat \gamma_n(\overline \theta_m) \big )\Big \|_{\frac {r'} {r'-2}} \,\big \| \sqrt n (\widehat{\theta}_{m}-\theta^*_m) \big \|^2_{r'}.
\end{eqnarray}
Finally, with Proposition \ref{2+} and Lemma 4 of \cite{barW}, we have $\sup_{n\in \N^*} \E \big [\big \| \partial^2_{\theta^2} \widehat \gamma_n(\overline \theta_m) \big \|^4_\Theta \big ]<\infty$ and $\frac {r'} {r'-2}\leq 4$ since $r'>8/3$, and therefore
\begin{equation*}
\sup_{n\in \N^*} \E \big [n\, \big ( \widehat \gamma_n(\theta^*_m)-\widehat \gamma_n(\widehat{\theta}_m) \big )\big ] <\infty,
\end{equation*}
which concludes the proof.
\end{proof}

\begin{proof}[Proof of Proposition  \ref{prop0}] For $m^* \subset m$ we have $\theta^*_m=\theta^*$ and therefore $\ell(\widehat{\theta}_{m},\theta^*)=R(\widehat{\theta}_{m})-R(\theta^*)$ and using \eqref{eq:pr1}, we have $$\displaystyle \E \big [ \ell(\widehat{\theta}_{m},\theta^*) \big ]=\frac 1 n \ \E [I_1(m)] \simn-\frac 1 n \  \mbox{Trace}\Big( \big (F_m(\theta^*_m)\big )^{-1}  \,G_m(\theta^*_m)\Big ).
$$
But the matrix $G(\theta^*_m)$ and $-F(\theta^*_m) $ are positive definite function from Assumption {\bf A2}. Thus if $m^* \subset m$ and $m \neq m^*$, then  $$- \mbox{Trace}\Big( \big (F_{m^*}(\theta^*)\big )^{-1}  \,G_{m^*}(\theta^*)\Big ) < - \mbox{Trace}\Big( \big (F_m(\theta^*)\big )^{-1}  \,G_m(\theta^*)\Big )$$ 
since all the eigenvalues of $-\big (F(\theta^*)\big )^{-1}  \,G(\theta^*)$ are positive. This  implies $\E \big [ \ell(\widehat{\theta}_{m^*},\theta^*) \big ] < \E \big [ \ell(\widehat{\theta}_{m},\theta^*) \big ]$ for $n$ large enough. \\
If $m^* \not \subset m$, then:
$$
\ell(\widehat{\theta}_{m},\theta^*)=\big (R(\widehat{\theta}_{m})-R(\theta^*_m)\big )+ \big (R(\theta^*_m)-R(\theta^*)\big).
$$
Using \eqref{eq:pr1}, we also have 
$$\displaystyle \E \big [ \ell(\widehat{\theta}_{m},\theta^*_m) \big ]=\frac 1 n \ \E [I_1(m)] \simn-\frac 1 n \  \mbox{Trace}\Big( \big (F_m(\theta^*_m)\big )^{-1}  \,G_m(\theta^*_m)\Big ).
$$
But as it was established in \cite{bar2019} that  $\big (R(\theta^*_m)-R(\theta^*)\big)=2 \, DK_L(\theta^*\|\theta_m^*)>0$ since $m \not \subset m^*$. Therefore,  $\E \big [ \ell(\widehat{\theta}_{m^*},\theta^*) \big ] = o\big (\E \big [ \ell(\widehat{\theta}_{m},\theta^*) \big ]\big )$ for any $m$ such as $m^* \not \subset m$. 

\end{proof}

\begin{proof}[Proof of Proposition  \ref{prop}]
The proof of this proposition can be deduced from
\begin{equation} \label{I3}
\E \big [n \,  I_3(m) \big ]=\E \big [n \, \big (R(\theta^*_m) -\widehat \gamma_n(\theta^*_m) \big )  \big ]=v_n^*
\end{equation}
for any $m\in {\cal M}$. 
For establishing \eqref{I3}, we begin by
\begin{equation}\label{decompI3}
I_3(m)=\big (R(\theta^*_m) -\gamma_n(\theta^*_m) \big )+ \big (\gamma_n(\theta^*_m) -\widehat \gamma_n(\theta^*_m)  \big ) :=I_{31}(m)+I_{32}(m).
\end{equation}
Firstly, since $\E \big [\gamma(\theta^*_m,X_0)\big ]=R (\theta^*_m)$ and $(X_t)_{t\in \Z}$ is a stationary times series, then for any $n \in \N^*$,
\begin{equation}\label{I31}
\E \big [\gamma_n(\theta^*_m) \big ]=\frac 1 n \, \sum_{t=1}^n \E \big [\gamma(\theta^*_m,X_t)\big ]=R (\theta^*_m)\quad \Longrightarrow \quad \E \big [I_{31}(m)\big ]=0.
\end{equation}
Secondly, from Assumption {\bf A0} and \cite{barW}, there exists $C>0$ such that for any $t\geq 1$
$$
\E \big [ \big \|\gamma(\theta,X_t)-\widehat \gamma (\theta,X_t)\big \|_\Theta \big ] \leq C \, \sum_{s \geq t} \big (\alpha_s(f_{\theta},\Theta)+ \alpha_s(M_{\theta},\Theta) \big ).
$$
Therefore, there exist $C>0$ and $C'>0$ such that for any $m \in {\cal M}$,
\begin{eqnarray}
\nonumber
\E \big [ \big \|\gamma_n(\theta^*_m)-\widehat \gamma_n(\theta^*_m) \big \|_\Theta \big ] &\leq & \frac C n \,\sum_{t=1}^n  \sum_{s \geq t} \big (\alpha_s(f_{\theta},\Theta)+ \alpha_s(M_{\theta},\Theta) \big ) \\
\label{borneC} &\leq & \frac C n \,\sum_{t=1}^n t^{1-\delta} \leq \frac{C'} n,
\end{eqnarray}
since $\delta>7/2$ from Assumption {\bf A5}.
Moreover, for any $m \in {\cal M}$ such as $m^*\subset m$ (overfitting setting), we have $\gamma_n(\theta^*_m)-\widehat \gamma_n(\theta^*_m)=\gamma_n(\theta^*_{m^*})-\widehat \gamma_n(\theta^*_{m^*})$. Using this and \eqref{borneC} we deduce that for any $m\in {\cal M}$, there exists a bounded sequence $(v^*_n)_{n\in \N^*}$ not depending on $m$ when $m^* \subset m$ satisfying
\begin{equation}\label{EI32}
\E \big [I_{32}(m) \big ] = \frac {v^*_n}{n}.
\end{equation}
Using also Lemma \ref{prop1}, this implies the asymptotic behavior of $\E\big[ \mbox{pen}_{id}(m)\big] $.
\end{proof}
\noindent Now we establish a preliminary lemma that is an important step towards the proof of Theorem \ref{theo:1}.
\begin{lemma}\label{lem:deb}
Let $\mbox{pen}: m\in \mathcal{M}_n \mapsto \textnormal{pen}(m)\in \R_+$. Then,
	\begin{equation}\label{eq:lo}
	\ell(\widehat{\theta}_{\widehat{m}_{\textnormal{pen}}},\theta^* ) \leq \underset{m \in \mathcal{M}}{\min} \big\{\ell(\widehat{\theta}_{m},\theta^*)\big\}+ \big (\textnormal{pen}(\widehat m _{id})-\textnormal{pen}(\widehat m _{\textnormal{pen}}) \big )-\big (\textnormal{pen}_{id}(\widehat m _{id})-\textnormal{pen}_{id}(\widehat m _{\textnormal{pen}}) \big ) .
	\end{equation}
\end{lemma}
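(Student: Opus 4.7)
The plan is to exploit two simple identities: the definition of the ideal penalty, which rewrites $R(\widehat\theta_m)=\widehat\gamma_n(\widehat\theta_m)+\mathrm{pen}_{id}(m)$ for every $m\in\mathcal{M}$, and the minimality of $\widehat m_{\mathrm{pen}}$ for the criterion $\widehat C_{\mathrm{pen}}=\widehat\gamma_n(\widehat\theta_\cdot)+\mathrm{pen}(\cdot)$. The whole argument is then essentially an algebraic rearrangement and does not require any probabilistic input.

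First, from the definition \eqref{eq:crit} applied at $\widehat m_{\mathrm{pen}}$ and at $\widehat m_{id}$,
\begin{equation*}
\widehat\gamma_n(\widehat\theta_{\widehat m_{\mathrm{pen}}})+\mathrm{pen}(\widehat m_{\mathrm{pen}})\;\le\;\widehat\gamma_n(\widehat\theta_{\widehat m_{id}})+\mathrm{pen}(\widehat m_{id}).
\end{equation*}
Next, I would add $\mathrm{pen}_{id}(\widehat m_{\mathrm{pen}})$ on both sides and invoke \eqref{eq:penid}, which gives $\widehat\gamma_n(\widehat\theta_m)+\mathrm{pen}_{id}(m)=R(\widehat\theta_m)$, to convert the left-hand side into $R(\widehat\theta_{\widehat m_{\mathrm{pen}}})+\mathrm{pen}(\widehat m_{\mathrm{pen}})$; similarly, writing $\widehat\gamma_n(\widehat\theta_{\widehat m_{id}})=R(\widehat\theta_{\widehat m_{id}})-\mathrm{pen}_{id}(\widehat m_{id})$ transforms the right-hand side. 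After subtracting $R(\theta^*)$ from both sides and collecting terms, I obtain
\begin{equation*}
\ell(\widehat\theta_{\widehat m_{\mathrm{pen}}},\theta^*)\;\le\;\ell(\widehat\theta_{\widehat m_{id}},\theta^*)+\bigl(\mathrm{pen}(\widehat m_{id})-\mathrm{pen}(\widehat m_{\mathrm{pen}})\bigr)-\bigl(\mathrm{pen}_{id}(\widehat m_{id})-\mathrm{pen}_{id}(\widehat m_{\mathrm{pen}})\bigr).
\end{equation*}

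Finally, I would conclude by observing that, by the very definition \eqref{mid}, $\widehat m_{id}\in\argmin_{m\in\mathcal{M}}\ell(\widehat\theta_m,\theta^*)$, so that $\ell(\widehat\theta_{\widehat m_{id}},\theta^*)=\min_{m\in\mathcal{M}}\ell(\widehat\theta_m,\theta^*)$, which yields \eqref{eq:lo}. There is no real obstacle here: the only thing to be careful about is the sign bookkeeping when swapping $\widehat\gamma_n$ for $R$ through the identity $R(\widehat\theta_m)-\widehat\gamma_n(\widehat\theta_m)=\mathrm{pen}_{id}(m)$, since the same substitution is applied asymmetrically at the two models $\widehat m_{\mathrm{pen}}$ and $\widehat m_{id}$ — which is precisely what produces the two bracketed difference terms appearing in \eqref{eq:lo}.
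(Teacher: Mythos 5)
Your proof is correct and follows essentially the same route as the paper's: both rest on the inequality $\widehat{C}_{\textnormal{pen}}(\widehat m_{\textnormal{pen}})\le \widehat{C}_{\textnormal{pen}}(\widehat m_{id})$, the identity $R(\widehat\theta_m)=\widehat\gamma_n(\widehat\theta_m)+\textnormal{pen}_{id}(m)$, and the fact that $\widehat m_{id}$ minimizes $m\mapsto\ell(\widehat\theta_m,\theta^*)$ by \eqref{mid}. The sign bookkeeping is handled correctly, so nothing further is needed.
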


\begin{proof}
By definition, for any $m\in {\cal M}$, 
\begin{equation}\label{Cl}
\widehat C_{\textnormal{pen}_{id}}(m)=R(\widehat \theta_m)=\ell(\widehat{\theta}_{m},\theta^* )+R(\theta^*).
\end{equation} 
As a consequence,
\begin{equation}\label{minid}
\underset{m \in \mathcal{M}}{\min} \big\{\ell(\widehat{\theta}_{m},\theta^*)\big\}=\ell(\widehat{\theta}_{\widehat m_{id}},\theta^*)=\underset{m \in \mathcal{M}}{\min} \big \{\widehat C_{\textnormal{pen}_{id}}(m)\big\}-R(\theta^*).
\end{equation}
For any $m \in {\cal M}$, we also have
$$
\widehat C_{\textnormal{pen}}(m)=\widehat C_{\textnormal{pen}_{id}}(m)+\textnormal{pen}
(m)-\textnormal{pen}_{id}(m).
$$
By definition of $\widehat m_{\textnormal{pen}}$, we have $\widehat C_{\textnormal{pen}}(\widehat m_{\textnormal{pen}}) \leq \widehat C_{\textnormal{pen}}(\widehat m_{id})$. Therefore,
\begin{eqnarray*}
\widehat C_{\textnormal{pen}}(\widehat m_{\textnormal{pen}}) & \leq & \widehat C_{\textnormal{pen}_{id}}(\widehat m_{{id}})+\textnormal{pen}(\widehat m_{{id}})-\textnormal{pen}_{id}(\widehat m_{{id}}) \\
\widehat C_{\textnormal{pen}_{id}}(\widehat m_{\textnormal{pen}})+\textnormal{pen}(\widehat m_{\textnormal{pen}})-\textnormal{pen}_{id}(\widehat m_{\textnormal{pen}}) 
& \leq & \widehat C_{\textnormal{pen}_{id}}(\widehat m_{{id}})+\textnormal{pen}(\widehat m_{{id}})-\textnormal{pen}_{id}(\widehat m_{{id}}).
\end{eqnarray*}
By replacing $\widehat C_{\textnormal{pen}_{id}}(m)$ by $\ell(\widehat \theta_m,\theta^*)+R(\theta^*)$ following \eqref{Cl} and using \eqref{minid}, then \eqref{eq:lo} is established.
\end{proof}
\begin{proof}[Proof of Theorem \ref{theo:0}]
Let ${\cal M}^*=\big \{ m\in {\cal M}, ~m^*\subset m \big \}$ and ${\cal M}'={\cal M} \setminus {\cal M}^*$.  Let $m \in {\cal M}'$. We have:
\begin{eqnarray*}
\P \big( \widehat m _{{\textnormal{pen}}}=m\big) & \leq & \P \big ( \widehat C_{{\textnormal{pen}}}(m)\leq \widehat C_{{\textnormal{pen}}}(m^*) \big ) \\
\nonumber  & \leq & \P \Big \{ \widehat \gamma_n(\widehat \theta_m)-\widehat \gamma_n(\widehat \theta_{m^*}) \leq {\textnormal{pen}}(m^*)-  {\textnormal{pen}}(m)\Big \} \\
& \leq &  \P \Big \{    n \, \big (\widehat \gamma_n(\widehat \theta_m)-\widehat \gamma_n(\theta_m^*)\big )+n \, \big (\widehat \gamma_n(\theta_m^*)-R(\theta_m^*)\big )+n \, \big (R(\theta^*)-\widehat \gamma_n(\theta^*)\big ) \\
\nonumber  && \hspace{0.5cm}  + n \, \big (\widehat \gamma_n(\theta^*)-\widehat \gamma_n(\widehat \theta_{m^*})\big )  \leq n \, \big (R(\theta^*)-R(\theta_m^*) \big )+n \, \big ( {\textnormal{pen}}(m^*)-  {\textnormal{pen}}(m)\big )\Big \} \\
& \leq & \P \Big \{  Z_1+Z_2+Z_3+Z_4+Z5\leq -2 \,n \,DK_L(\theta^*\|\theta_m^*)\Big \}
\end{eqnarray*}
with $Z_5=n \, \big ( {\textnormal{pen}}(m)-  {\textnormal{pen}}(m^*)\big )$ and with $R(\theta^*)-R(\theta_m^*)=-2 \, DK_L(\theta^*\|\theta_m^*)<0$ since $m \not \subset m^*$ from \cite{bar2019}. Now, using $\P(Z_1+\cdots+Z_5\leq c)\leq \P(Z_1\leq c/5)+ \cdots +\P(Z_5\leq c/5)$ for any random variables $Z_i$ and real number $c$, we obtain:
\begin{equation}
\P \big( \widehat m _{{\textnormal{pen}}}=m\big) \leq \sum_{i=1}^5 \P \big ( Z_i \leq c_n \big ),
\end{equation}
where $c_n=-\frac 2 5 \,n \,DK_L(\theta^*\|\theta_m^*)$. \\
Let $Z_1:=n \, \big (\widehat \gamma_n(\widehat \theta_m)-\widehat \gamma_n(\theta_m^*)\big )$. Following the same computations than in \eqref{gamma}, with $8/3< r'\leq r/3$ and $r'< 2 (\delta-1)$ defined in Proposition \ref{2+}, and Hölder Inequality,
\begin{eqnarray*}
\E \big [\big | Z_1 \big |^{\frac {3r'}8} \big ]&\leq &\big \|\big( \partial^2_{\theta^2} \widehat \gamma_n(\overline \theta_m) \big )^{1/2}\,  \sqrt n (\widehat{\theta}_{m}-\theta^*_m) \big \|^{\frac {3r'}4}_{\frac {3r'}4} \\
&\leq &  \Big \| \big( \partial^2_{\theta^2} \widehat \gamma_n(\overline \theta_m) \big )\Big \|_{\frac {3r'} {2}} \,\big \| \sqrt n (\widehat{\theta}_{m}-\theta^*_m) \big \|^{\frac {3r'}4}_{r'}.
\end{eqnarray*}
Therefore, using Proposition \ref{2+},
\begin{multline}\label{Z1}
\P \big ( Z_1\leq c_n\big )\leq \P \Big ( |Z_1|^{\frac {3r'}8}\geq \big (|c_n|\big )^{\frac {3r'}8}\Big )
\leq  \E \big [\big | Z_1 \big |^{\frac {3r'}8} \big ]\frac 1 {|c_n|^{\frac {3r'}8}} \\
\Longrightarrow \quad \P \big ( Z_1\leq c_n\big )=O\Big (\frac 1 {n^{\frac {3r'}8} } \Big )=o\big (\frac 1 n\big ),
\end{multline}
since $3r'/8>1$. The same kind of computations can also be done for $Z_4:=n \, \big (\widehat \gamma_n(\theta^*)-\widehat \gamma_n(\widehat \theta_{m^*})\big )$ and we also obtain $\P \big ( Z_4\leq c_n\big )=o\big (\frac 1 n\big )$. \\
~\\
Consider now $Z_2:=n \, \big (\widehat \gamma_n(\theta_m^*)-R(\theta_m^*)\big )$. Then, 
\begin{equation*}
\E \big [\big | Z_2 \big |^{8/3} \big ]\leq  2^{5/3} \,  \Big (\E \big [\big \| \widehat L_n(\theta)-  L_n(\theta) \big \|^{8/3} _\Theta \big ] + n^{8/3}\,  \E \big [\big |\sum_{k=1}^n \big (\gamma(\theta_m^*,X_k)-R(\theta_m^*) \big )\big |^{8/3} \big ] \Big ).
\end{equation*}
Using \cite{barW}, we know that $\sup_{n\in \N^*} E \big [\big \| \widehat L_n(\theta)-  L_n(\theta) \big \|^{8/3} _\Theta \big ] <\infty$ from Assumption {\bf A5} and since $\delta>7/2>2$. Now, consider $Y_k:=\gamma(\theta_m^*,X_k)-R(\theta_m^*)$. Then, $(Y_k)_{k\in \Z}$ is a stationary time series, $\tau_Y$-weakly dependent because, using the same type of arguments as in the proof of Lemma \ref{lem00}, we have:
$$
\tau_Y(s)\leq \sum_{\ell=1}^\infty \big ( \alpha_\ell(f_\theta,\Theta)+\alpha_\ell(M_\theta,\Theta)\big )\, \lambda_{s+1-\ell},
$$
with $\lambda$ defined in Lemma \ref{lem0}. Therefore, using Assumption {\bf A5}, we also have $\tau_Y(s) = O \big ( s^{\delta-1} \log (s) \big )$, with $\delta>7/2$. Now, using the same type of arguments as in the proof of Lemma \ref{lem:nv2},
$$
\E \big [ \big | \sum_{k=1}^n Y_k\big | ^{8/3} \big ] \leq C_{8/3} \, \big (M_{{8/3},n}+M_{2,n}^{4/3} \big ),
$$
and $M_{2,n}\leq C \, n$ while $M_{{8/3},n}\leq C \, n  \sum_{i=1}^n i^{8/3-1-\delta} \log(i)=o\big (n^{4/3} \big)$. Therefore, there exists $C>0$ such that for any $n \in \N^*$, 
$$
\E \big [ \big | \sum_{k=1}^n Y_k\big | ^{8/3} \big ] \leq C \, n^{4/3}.
$$
Finally, we deduce that there exists $C>0$ such that for any $n \in \N^*$,
\begin{equation}\label{Z21}
\E \big [\big | Z_2 \big |^{8/3} \big ]\leq C \,  n^{4/3}.
\end{equation}
This result and Markov Inequality imply, 
\begin{multline}\label{Z2}
\P \big ( Z_2\leq c_n\big )\leq \P \Big ( |Z_2|^{8/3}\geq \big (|c_n|\big )^{8/3}\Big )
\leq \E \big [\big | Z_2 \big |^{8/3} \big ] \, \frac 1 {|c_n|^{8/3}} \\
\Longrightarrow \quad \P \big ( Z_2\leq c_n\big )=O\Big (n^{4/3} \, \frac 1 {|c_n|^{8/3 }} \Big )=O\Big (\frac 1 {n^{4/3}}\Big ),
\end{multline}
We obtain the same bound for $Z_3:=n \, \big (R(\theta^*)-\widehat \gamma_n(\theta^*)\big )$. \\
Finally using the assumption \eqref{condpen}, we have:
\begin{multline}\label{Z5}
n \, \P \big ( Z_5 \leq c_n \big )=n \,  \P \Big ( \big ({\textnormal{pen}}(m)-  {\textnormal{pen}}(m^*)\big ) \leq - \frac 2 5 \, DK_L(\theta^*\|\theta_m^*) \Big )  \\
\leq n \, \P \Big ( {\textnormal{pen}}(m^*) \geq  \frac 2 5 \, DK_L(\theta^*\|\theta_m^*) \Big )  \limiten 0.
\end{multline}
By this way, \eqref{theo0pen} is established.
\end{proof}

\begin{proof}[Proof of Theorem \ref{theo:1}]
The proof is mainly based on Lemma  \ref{lem:deb}. 
From the proof of Lemma \ref{prop1}, we deduce that for any $m \in {\cal M}$, there exists two positive random variables $Y(m)$ and $Z(m)$ such as $n \, \big |I_1(m)+I_2(m)\big | \leq Y(m)$ and  $n \, \big |I_3(m) \big | \leq Z(m)$ for any $n \in \N^*$. Moreover, $Y(m)$ and $Z(m)$ have bounded expectations. Therefore, using Markov Inequality, since ${\cal M}$ is supposed to be a finite family of models, 
for any $\varepsilon>0$ there exists $K'_\varepsilon>0$ such as 
\begin{equation*}\label{eq:w}
\limsup _{ n \to \infty} \max _{m \in {\cal M}} \,  \P \Big (n \, \textnormal{pen}_{id}(m)  \geq K'_\varepsilon  \Big )\leq \varepsilon.
\end{equation*}
Therefore, using this inequality and \eqref{eq:penal}, we deduce that for any $\varepsilon>0$ there exist $M_\varepsilon>0$ and $N_\varepsilon \in \N^*$ such that for any $n \geq N_\varepsilon$,
\begin{equation}\label{eq:al}
\P \Big (  n \, \big | \big (\textnormal{pen}(\widehat m _{id})-\textnormal{pen}(\widehat m _{\textnormal{pen}}) \big )-\big (\textnormal{pen}_{id}(\widehat m _{id})-\textnormal{pen}_{id}(\widehat m _{\textnormal{pen}}) \big ) \big | \leq M_\varepsilon \Big )
\geq 1 -\varepsilon.
\end{equation}
The proof of \eqref{bornepen} is now completed from \eqref{eq:lo} of Lemma \ref{lem:deb} and \eqref{eq:al}. 
\end{proof}
\begin{proof}[Proof of Theorem \ref{Theo7}]
Using the same tricks than in Lemma \ref{lem:deb}, we obtain:
\begin{multline}\label{lemmetilde}
\ell(\widehat{\theta}_{\widehat{m}_{{\textnormal{pen}}}},\theta^* )  \leq \ell(\widehat{\theta}_{m^*},\theta^*)+
 \big ( {\textnormal{pen}}(m^*)-{\textnormal{pen}}(\widehat m _{{\textnormal{pen}}}) \big ) - \big ( \textnormal{pen}_{id}(m^*)  -\textnormal{pen}_{id}(\widehat m _{ {\textnormal{pen}}}) \big ) .
\end{multline}
Let ${\cal M}^*=\big \{ m\in {\cal M}, ~m^*\subset m \big \}$ and ${\cal M}'={\cal M} \setminus {\cal M}^*$. 
Now, for $m\in {\cal M}^*$ and $m \neq m^*$, as in the beginning of the proof of Theorem \ref{theo:0}, we have:
\begin{eqnarray*}
\P \big (\widehat m _{\textnormal{pen}}=m \big ) & \leq & \P \big ( \widehat C_{\textnormal{pen}}(m)\leq  \widehat C_{\textnormal{pen}}(m^*) \big ) \\
 & \leq &  \P \Big \{    n \, \big (\widehat \gamma_n(\widehat \theta_m)-\widehat \gamma_n(\theta^*) \big )+n \, \big (\widehat \gamma_n(\theta^*)-\widehat \gamma_n(\widehat \theta_{m^*})\big ) \leq n \, \big ( {\textnormal{pen}}(m^*)-  {\textnormal{pen}}(m)\big )\Big \} \\
& \leq &  \P \Big \{   n \, \big (\widehat \gamma_n(\theta^*) -\widehat \gamma_n(\widehat \theta_m)\big ) \geq \E[f_n(m)]\Big \}  + \P \Big \{  n \, \big (\widehat \gamma_n(\widehat \theta_{m^*})-\widehat \gamma_n(\theta^*)\big )  \geq \E[f_n(m)]\Big \}\\
\nonumber  &&\hspace{6cm} + \P \Big \{3\big (f_n(m) -\E[f_n(m)]\big ) \geq \E[f_n(m)]\Big \}
\end{eqnarray*}
with $f_n(m)=\frac n 3 \, e_n(m)$ and $e_n(m)={\textnormal{pen}}(m)-  {\textnormal{pen}}(m^*)>0$ since $m^*\subset m$ and $m\neq m^*$.  \\
Using exactly the same arguments as in the proof of Theorem \ref{theo:0}, there exists $C_{1}>0$ such that for $n$ large enough,
\begin{multline}\label{Z14}
 \P \Big \{   n \, \big (\widehat \gamma_n(\theta^*)-\widehat \gamma_n(\widehat \theta_m) \big ) \geq \E[f_n(m)]\Big \} +\P \Big \{  n \, \big (\widehat \gamma_n(\widehat \theta_{m^*}) -\widehat \gamma_n(\theta^*)\big )  \geq \E[f_n(m)]\Big \} \\ \leq \frac{C_{1}}{\E[f_n(m)]^{\frac{3r'}{8}}}
\end{multline}
where $r'>\frac 8 3$. Moreover, from Markov Inequality we have
\begin{multline}\label{Z22}
 \P \Big \{3\big (f_n(m) -\E[f_n(m)]\big ) \geq \E[f_n(m)]\Big \} \leq \P \Big \{\big |f_n(m)-\E[f_n(m)] \big |\geq \frac 1 3 \, \E[f_n(m)]\Big \} \\
\leq \frac{3 \, \E \big [\big |f_n(m)-\E[f_n(m)] \big | \big ]}{\E[f_n(m)]}.
\end{multline}
As a consequence, from \eqref{Z14} and \eqref{Z22}, with $\kappa>1$, for $m\in {\cal M}^*$ and $m \neq m^*$ and $n$ large enough 
\begin{equation}\label{MajPm}
\P \big (\widehat m _{\textnormal{pen}}=m \big )  \leq \frac{C'_{1}}{(n\,\E[e_n(m)])^\kappa} + 3 \, \frac{n \, \E \big [\big |e_n(m)-\E[e_n(m)]\big | \big ]}{n \,\E[e_n(m)]} \limiten 0.
\end{equation}
Using this result as well as \eqref{MajPm}, one finally obtain \eqref{consist}. \\
Moreover,
\begin{eqnarray*}
&& \E\Big [ \big | {\textnormal{pen}}(m^*)-{\textnormal{pen}}(\widehat m _{{\textnormal{pen}}}) \big |\Big ] \\
&&\hspace{1.5cm} =\sum_{m\in {\cal M}^*} \E\Big [ \big | {\textnormal{pen}}(m^*)-{\textnormal{pen}}(\widehat m _{{\textnormal{pen}}}) \big |~\Big | ~  \widehat m _{{\textnormal{pen}}}=m \Big ] \, \P \Big \{\widehat m _{{\textnormal{pen}}}=m \Big \} \\
&& \hspace{4cm} + \sum_{m\in {\cal M}'} \E\Big [ \big | {\textnormal{pen}}(m^*)-{\textnormal{pen}}(\widehat m _{{\textnormal{pen}}}) \big | ~\Big | ~  \widehat m _{{\textnormal{pen}}}=m \Big ] \, \P \Big \{\widehat m _{{\textnormal{pen}}}=m \Big \} \\
&&\hspace{1.5cm} =\sum_{m\in {\cal M}^*} \E\Big [ \big | {\textnormal{pen}}(m^*)-{\textnormal{pen}}(m) \big |\Big ] \, \P \Big \{\widehat m _{{\textnormal{pen}}}=m \Big \} \\
&& \hspace{4cm} + \sum_{m\in {\cal M}'} \E\Big [ \big | {\textnormal{pen}}(m^*)-{\textnormal{pen}}( m ) \big | \Big ] \, \P \Big \{\widehat m _{{\textnormal{pen}}}=m \Big \} \\
&&\hspace{1.5cm} \leq \frac 1 n \, \sum_{m\in {\cal M}^*} \Big (\frac{C'_{1}}{(n\,\E[e_n(m)])^{\kappa-1} } +  n \, \E \big [\big |e_n(m)-\E[e_n(m)] \big | \big ] \Big )+ \frac {C'_2} n \, \sum_{m\in {\cal M}'} \E[e_n(m)], 
\end{eqnarray*}
from \eqref{MajPm}, where $\kappa>1$ and assumption \eqref{condpen}. As a consequence, using the conditions \eqref{Cond7} of Theorem \ref{Theo7},  
\begin{equation}\label{pen0}
n \, \E\Big [ \big | {\textnormal{pen}}(m^*)-{\textnormal{pen}}(\widehat m _{{\textnormal{pen}}}) \big |\Big ] \limiten 0.
\end{equation}
Moreover, using \eqref{borneC}, there exists $C_3>0$ such as for any $m\in {\cal M}$, 
\begin{equation*}
n \, \E\Big [ \big | \textnormal{pen}_{id}(m^*)  -\textnormal{pen}_{id}(m) \big |\Big ] \leq C_3.
\end{equation*}
Using once again the decomposition on ${\cal M}^*$ and ${\cal M}'$, and $\P\big \{ \widehat m _{{\textnormal{pen}}}=m\big \} \limiten 0$ for $m\neq m^*$, we deduce
\begin{equation}\label{pen0bis}
n \, \E\Big [ \big | \textnormal{pen}_{id}(m^*)  -\textnormal{pen}_{id}( \widehat m _{{\textnormal{pen}}}) \big |\Big ] \limiten 0.
\end{equation}
Using the limit \eqref{pen0bis} as well as \eqref{pen0}, we deduce with Markov inequality that 
$$
n\, \Big [ \big ( {\textnormal{pen}}(m^*)-{\textnormal{pen}}(\widehat m _{{\textnormal{pen}}}) \big ) - \big ( \textnormal{pen}_{id}(m^*)  -\textnormal{pen}_{id}(\widehat m _{ {\textnormal{pen}}}) \big ) \Big ] \limiteproban 0
$$
inducing the proof of \eqref{bornewidepen} from \eqref{lemmetilde}. \\
Now, using the expectation of \eqref{lemmetilde}, we also obtain
\begin{multline*}\label{lemmetildebis}
\E \big [ \ell(\widehat{\theta}_{\widehat{m}_{{\textnormal{pen}}}},\theta^* ) \big ] \leq \E \big [ \ell(\widehat{\theta}_{m^*},\theta^*) \big ]+
\E \big [  \big |{\textnormal{pen}}(m^*)-{\textnormal{pen}}(\widehat m _{{\textnormal{pen}}}) \big | \big ] \\ + \E \big [ \big | \textnormal{pen}_{id}(m^*)  -\textnormal{pen}_{id}(\widehat m _{ {\textnormal{pen}}}) \big | \big ]  .
\end{multline*}
Now, by using \eqref{pen0} and \eqref{pen0bis} as well as Proposition \ref{prop0}, we obtain the proof of \eqref{borneopti}.
\end{proof}
\begin{proof}[Proof of Theorem \ref{Theo8}]
First we will prove that the probability of overfitting is asymptotically positive, which is
\begin{equation}\label{probafaux}
\liminf_{ n \to \infty} \, \P\Big (\widehat m_{\textnormal{pen}} \in {\cal M}^*\setminus\{m^*\} \Big )> 0.
\end{equation} 
Indeed, let $m\in {\cal M}^*\setminus\{m^*\} $. We have:
\begin{multline*}
\P \Big ( \widehat C_{{\textnormal{pen}}}(m)\leq \widehat C_{{\textnormal{pen}}}(m^*) \Big ) \\
 = \P \Big (  n \, \big (\widehat \gamma_n(\widehat \theta_{m^*})-\widehat \gamma_n(\theta^*) \big )+n \, \big (\widehat \gamma_n(\theta^*)-\widehat \gamma_n(\widehat \theta_{m})\big ) \geq g(m)-  g(m^*)\Big ) \\
= \P \Big (  n \,I_2(m) -n \,I_2(m^*) \geq g(m)-  g(m^*)\Big ),
\end{multline*}
using the notations of Lemma \ref{prop1} and since $\textnormal{pen}(m)=g(m)/n$ for any $m\in {\cal M}$. But using Lemma \ref{prop1} and Proposition \ref{prop0}, we know that if $m\in {\cal M}^*\setminus\{m^*\} $ then:
$$
n \,I_2(m)  \limiteloin U^*_m, \quad n \,I_2(m^*)  \limiteloin U^*_{m^*} \quad \mbox{and}\quad \E[U^*_{m^*} ]<\E[U^*_{m} ] 
$$
Moreover, $U^*_m=- Z_{(m)}^\top\,\big (G_m(\theta^*_m)\big )^{1/2}  \big (F_m(\theta^*_m)\big )^{-1} \big (G_m(\theta^*_m)\big )^{1/2}\, Z_{(m)}$ and $Z_{(m)}\egaleloi {\cal N}\big (0, I_{|m|}\big ) $. With a symmetric matrix diagonalization $\big (G_m(\theta^*_m)\big )^{1/2}  \big (F_m(\theta^*_m)\big )^{-1} \big (G_m(\theta^*_m)\big )^{1/2}=-P_{(m)} D_{(m)}P_{(m)}^\top$ where $D_{(m)}$ is a diagonal matrix with positive diagonal components, leading to 
$$
U^*_m=\widetilde {Z}_{(m)}^\top\,D_{(m)}\, \widetilde {Z}_{(m)}\quad\mbox{and}\quad \widetilde {Z}_{(m)}\egaleloi {\cal N}\big (0, I_{|m|}\big ).
$$
Therefore we can write for $m \in {\cal M}^*\setminus \{m^*\}$, $U^*_m=V^*_{m^*}+W^*_{m\setminus m^*}$ with 
$$
W^*_{m\setminus m^*}= \widetilde {Z}_{(m\setminus m^*)}^\top\,D_{(m\setminus m^*)}\, \widetilde {Z}_{(m\setminus m^*)}\quad\mbox{and}\quad \widetilde {Z}_{(m\setminus m^*)}\egaleloi {\cal N}\big (0, I_{|m|-|m^*|}\big )
$$
and $V^*_{m^*}$ and $W^*_{m\setminus m^*}$ are two independent random variables. Moreover, it is clear  that $W^*_{m\setminus m^*}$ behaves as a weighted $\chi^2(|m|-|m^*|)$ random variable, and therefore for any $c>0$, $\P \big (W^*_{m\setminus m^*}>c \big )>0$. Using $n \,I_2(m)-n \,I_2(m^*)  \limiteloin (V^*_{m^*}-U^*_{m^*})+W^*_{m\setminus m^*}$ with $(V^*_{m^*}-U^*_{m^*})$ and $W^*_{m\setminus m^*}$ independent random variables, we deduce that 
\begin{equation}\label{proba**}
\P \Big (  n \,I_2(m) -n \,I_2(m^*) \geq g(m)-  g(m^*)\Big ) \limiten p_{m,m^*}>0,
\end{equation}
and this proves \eqref{probafaux}. \\
~\\
Now, using previous notations, we have:
\begin{eqnarray*}
\E \big [\ell(\widehat{\theta}_{\widehat{m}_{{\textnormal{pen}}}},\theta^* ) \big ] &= & \sum_{m \in {\cal M}^*} \E \big [\ell(\widehat{\theta}_{m},\theta^* ) \big ] \, \P \big ( \widehat{m}_{{\textnormal{pen}}}=m \big )+\sum_{m \in {\cal M}'} \E \big [\ell(\widehat{\theta}_{m},\theta^* ) \big ] \, \P \big ( \widehat{m}_{{\textnormal{pen}}}=m \big ) \\
&= & \E \big [\ell(\widehat{\theta}_{m^*},\theta^* ) \big ] +\hspace{-5mm}  \sum_{m \in {\cal M}^*\setminus{m^*} }\hspace{-5mm} \big ( \E \big [\ell(\widehat{\theta}_{m},\theta^* ) \big ]-\E \big [\ell(\widehat{\theta}_{m^*},\theta^* ) \big ] \big ) \, \P \big ( \widehat{m}_{{\textnormal{pen}}}=m \big ) \\
&& \hspace{2cm} +\sum_{m \in {\cal M}'} \big ( \E \big [\ell(\widehat{\theta}_{m},\theta^* ) \big ]-\E \big [\ell(\widehat{\theta}_{m^*},\theta^* ) \big ] \big ) \, \P \big ( \widehat{m}_{{\textnormal{pen}}}=m \big ).
\end{eqnarray*}
Now using Proposition \ref{prop0}, we know that for $n$ large enough $\E \big [\ell(\widehat{\theta}_{m},\theta^* ) \big ]-\E \big [\ell(\widehat{\theta}_{m^*},\theta^* ) \big ] \big ) \geq 0$ for any $m\in {\cal M}$. Moreover, for $m\in {\cal M}^*\setminus{m^*}$ and $n$ large enough,
\begin{multline*}
\E \big [\ell(\widehat{\theta}_{m},\theta^* ) \big ]-\E \big [\ell(\widehat{\theta}_{m^*},\theta^* ) \big ] \\ 
\geq \frac 1 {2n} \Big (  \mbox{Trace}\Big( \big (-F_m(\theta^*_m)\big )^{-1}  \,G_m(\theta^*_m)\Big ) - \mbox{Trace}\Big( \big (-F_{m^*}(\theta^*_{m^*})\big )^{-1}  \,G_{m^*}(\theta^*_{m^*})\Big ) \Big ) \\
\geq \frac 1 {n} \, K(m,m^*),
\end{multline*}
where $M(m,m^*)>0$. As a consequence, for $n$ large enough, with $p(m,m^*)$ defined in \eqref{proba**},
\begin{equation*}
\E \big [\ell(\widehat{\theta}_{\widehat{m}_{{\textnormal{pen}}}},\theta^* ) \big ] \geq 
 \E \big [\ell(\widehat{\theta}_{m^*},\theta^* ) \big ] + \frac 1 {2n}\hspace{-2mm} \sum_{m \in {\cal M}^*\setminus{m^*} }\hspace{-5mm}K(m,m^*) \, p(m,m') 
\geq \E \big [\ell(\widehat{\theta}_{m^*},\theta^* ) \big ] + \frac {M} {n},
\end{equation*}
with $\displaystyle M=\frac 1 {2}\hspace{-2mm} \sum_{m \in {\cal M}^*\setminus{m^*} }\hspace{-5mm}K(m,m^*) \, p(m,m')>0$ and this achieves the proof.
\end{proof}
\begin{proof}[Proof of Theorem \ref{theo:bic}] We first verify conditions (C1) and (C2) of \cite{Chen} that are sufficient to imply Conditions (i), (ii) and (iii) of \cite{ktk}. Condition (C1) requires that $\widehat \sigma_n$ the largest eigenvalue of $\big (-\big (\partial ^2_{\theta_i\theta_j} \widehat L(\widehat \theta_m)\big )_{i,j \in m}\big )^{-1}$ satisfies $\widehat \sigma_n \limiteasn 0$, which is satisfied since it was already established that $\frac 1 n \, \big (\partial ^2_{\theta_i\theta_j} \widehat L(\widehat \theta_m)\big )_{i,j \in m} \limiteasn  F_m(\theta^*_m)$ and $F_m(\theta^*_m)$ is a negative definite matrix. Moreover, condition (C2) is also satisfied because $\theta_m \in \Theta_m \mapsto \big (\partial ^2_{\theta_i\theta_j} \widehat L(\theta_m)\big )_{i,j \in m} $ and $\theta_m \in \Theta_m \mapsto \big (\big (\partial ^2_{\theta_i\theta_j} \widehat L(\theta_m)\big )_{i,j \in m}\big )^{-1}$ are continuous functions for $n$ large enough. Therefore, using $h_n=-\frac 1 n \, \widehat L_n$, the assumptions of Theorem 1 of \cite{ktk} are satisfied and this implies that: 
\begin{multline*}
\int_{\Theta_m}b_m(\theta)\,  \exp\big(\widehat{L}_n(\theta)\big)\, d\theta = \exp\big(\widehat{L}_n(\widehat{\theta}_m) \big)\, \big(2\, \pi\big)^{|m|/2} \\
\times \det\Big (n \, \big (-\frac 1 n \, \partial ^2_{\theta_i\theta_j} \widehat L(\widehat \theta_m)\big )_{i,j \in m}\Big)^{-1/2}\Big ( b_m(\widehat \theta_m) +O(n^{-1})\Big )\quad a.s.
\end{multline*}
As a consequence, we have:
\begin{eqnarray*}
\widehat{S}(m,X) &= & -\log (|{\cal M}|)+\log \Big[ \int_{\Theta_m} b_m(\theta)\,  \exp\big( \widehat{L}_n(\theta)\big) \, d\theta   \Big]\\
&= &\widehat{L}_n(\widehat{\theta}_m)-\frac{\log(n)}{2}\, |m|+\log\big (b_m(\widehat \theta_m)\big ) \\
&&\hspace{1cm}+\frac{\log(2\pi)} 2 \, |m|-\frac{1}{2}\, \log \big ( \det\big (-\widehat F_n(m)\big)\big) -\log (|{\cal M}|)+O(n^{-1})\quad a.s.
\end{eqnarray*}
and Theorem  \ref{theo:bic} follows.
\end{proof}

\paragraph*{Aknowledgments}  This work has received funding from the European Union's Horizon 2020 research and innovation programme
under the Marie Sklodowska-Curie grant agreement No 754362.
We also thank Christian Francq for some really important suggestions.
\bibliographystyle{imsart-number} 
\bibliography{biblio10}
\end{document}